\documentclass[11pt]{article}
\usepackage{amsfonts,amsmath, amssymb,latexsym}
\usepackage[OT2,OT1]{fontenc}
\usepackage[all,cmtip]{xy}
\setlength{\textheight}{8.8375in}
\setlength{\textwidth}{6.5in}
\setlength{\topmargin}{0.0in}
\setlength{\headheight}{0.0in}
\setlength{\headsep}{0.0in}
\setlength{\leftmargin}{0.0in}
\setlength{\oddsidemargin}{0.0in}
\setlength{\parindent}{3pc}

\def\SH{\mbox{\fontencoding{OT2}\selectfont\char88}}
\def\Z{{\mathbb Z}}

\def\SL{{\rm SL}}
\def\GL{{\rm GL}}

\def\Stab{{\rm Stab}}

\def\SO{{\rm SO}}
\def\P{{\mathbb P}}

\def\Aut{{\rm Aut}}
\def\irr{{\rm irr}}

\def\Inv{{\rm Inv}}
\def\red{{\rm red}}
\def\Vol{{\rm Vol}}
\def\R{{\mathbb R}}
\def\F{{\mathbb F}}
\def\FF{{\mathcal F}}

\def\Q{{\mathbb Q}}
\def\pv{{V_\Z^{+}}}
\def\nv{{V_\Z^{-}}}
\def\pnv{{V_\Z^{\pm}}}
\def\pr{{V_\R^{+}}}
\def\nr{{V_\R^{-}}}
\def\pnr{{V_\R^{\pm}}}

\def\prg{{R^{+}}}
\def\nrg{{R^{-}}}
\def\pnrg{{R^{\pm}}}
\def\H{{\mathcal H}}

\def\J{{\mathcal J}}
\def\NN{{\mathcal N}}
\def\ZZ{{\mathcal Z}}
\def\C{{\mathcal C}}

\def\W{{\mathcal W}}

\def\Z{{\mathbb Z}}
\def\P{{\mathbb P}}
\def\F{{\mathbb F}}
\def\Q{{\mathbb Q}}
\def\C{{\mathbb C}}

\def\H{{\mathcal H}}

\def\SS{{\mathcal S}}

\def\var{{\textrm{Var}}}
\newtheorem{theorem}{Theorem}

\newtheorem{lemma}[theorem]{Lemma}
\newtheorem{proposition}[theorem]{Proposition}
\newenvironment{proof}{\noindent {\bf Proof:}}{$\Box$ \vspace{2 ex}}
\newtheorem{rem}[theorem]{Remark}

\title{The average size of the $5$-Selmer group of elliptic curves is
  $6$, and the average rank is less than $1$}

\author{Manjul Bhargava and Arul Shankar}

\begin{document}
\maketitle
\section{Introduction}

The purpose of this article is to show that the average rank of 
elliptic curves over $\Q$, when ordered by height, is less than $1$
(in fact, less than $.885$). As a consequence of our methods, we also
prove that at least four fifths of all elliptic curves over $\Q$ have
rank either 0 or 1; furthermore, at least one fifth of all elliptic
curves in fact have rank 0.  The primary ingredient in the proofs of
these theorems is a determination of the average size of the 5-Selmer
group of elliptic curves over $\Q$; we prove that this average size is 6.
Another key ingredient is a new lower bound on the equidistribution of
root numbers of elliptic curves; we prove that there is a
family of elliptic curves over $\Q$ having density at least $55\%$ 
for which the root number is
equidistributed.

We now describe these results in more detail.  
Recall that any elliptic curve $E$ over $\Q$ is isomorphic to one of the form
$E_{A,B}:y^2=x^3+Ax+B$. If, for all primes $p$, we further assume that
$p^6\nmid B$ whenever $p^4\mid A$, then this expression is unique. 
We define the (naive)
height of $E_{A,B}$ by $H(E_{A,B}):=\max\{4|A^3|,27B^2\}$.
In previous work (\cite{BS}, \cite{TC}, and \cite{foursel}), we showed
that when elliptic curves over $\Q$ are ordered by height, the average sizes
of their $2$-, $3$-, and $4$-Selmer groups are given by $3$, $4$, and $7$,
respectively. These results, and their proofs, led us to
conjecture in \cite{foursel} 
that for all~$n$, the average size of the $n$-Selmer
group of elliptic curves over $\Q$, when ordered by height, is the sum of the
divisors of $n$.

In this paper, we prove the following theorem which confirms the 
conjecture when~$n=5$:

\begin{theorem}\label{mainellip}
When elliptic curves $E/\Q$ are ordered by height,
the average size of the $5$-Selmer group $S_{5}(E)$ is equal to $6$.
\end{theorem}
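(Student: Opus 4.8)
The plan is to adapt the geometry-of-numbers method of \cite{BS} to a representation whose integral orbits parametrize genus one curves of degree $5$. Recall that a genus one normal curve of degree $5$ in $\P^4$ is cut out by the $4\times 4$ sub-Pfaffians of a $5\times 5$ alternating matrix of linear forms, i.e.\ by an element of the space $V=\Q^5\otimes\wedge^2\Q^5$ of quintuples of $5\times 5$ alternating matrices; the group $G=\SL_5\times\SL_5$ acts on $V$ (one factor by change of coordinates on $\P^4$, the other on the $5$-dimensional space of Pfaffian quadrics), with ring of polynomial invariants $\Q[I,J]$ whose two generators may be normalized so as to be identified with the coefficients $A,B$ of the Jacobian Weierstrass equation $E_{A,B}$ of the curve attached to $v\in V$. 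The first step is to establish (or invoke) the resulting arithmetic parametrization: exactly as in the degree-$3$ case \cite{TC}, and after accounting for the passage from $\SL_5(\Z)$-orbits to the $H^1$-classes they represent, the nonzero elements of $S_5(E_{A,B})$, ranging over all $A,B$, are in bijection with the $G(\Z)$-orbits on $V(\Z)=\Z^5\otimes\wedge^2\Z^5$ that are irreducible and everywhere locally soluble, in the sense of \cite{BS}. Hence $\sum_{H(E)<X}(\#S_5(E)-1)$ equals the number of such orbits whose invariants $(I,J)$ have height less than $X$.

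Next I would count all $G(\Z)$-orbits on $V(\Z)$ with invariants of bounded height by Bhargava's averaging method: one fixes a Siegel set $\FF\subset G(\R)$ for $G(\Z)$ and a bounded set $\RR_{<X}\subset V(\R)$ of representatives for the finitely many $G(\R)$-orbits of each height $<X$, and averages the lattice-point count in $\FF g\RR_{<X}$ over $g$ ranging over a large compact subset of $G(\R)$; this makes the number of irreducible $G(\Z)$-orbits of height $<X$ asymptotic to $\Vol(\FF\cdot\RR_{<X})$ (in a suitable measure), which is a positive constant times $X^{5/6}$. The analytic heart of this step is the cusp of $\FF$: a priori the cuspidal region can contain far more than $O(X^{5/6})$ integer points. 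As in \cite{BS,TC,foursel}, one partitions the cusp according to which coordinates of $v\in V(\Z)$ are forced to be small, proves that in each cuspidal subregion the number of \emph{irreducible} integer points of height $<X$ is $o(X^{5/6})$ --- the remaining points there being reducible, hence yielding no nonzero Selmer element --- and concludes that the main body of $\FF\cdot\RR_{<X}$ produces the entire main term.

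To pass from irreducible orbits to those that are everywhere locally soluble I would apply a sieve. This needs (a) for each place $v$ of $\Q$, the $v$-adic density of elements of $V(\Z_v)$ whose associated curve has a $\Q_v$-rational point; and (b) a uniformity estimate bounding, uniformly in the prime $p$, the number of $G(\Z)$-orbits of height $<X$ whose discriminant (that of $E_{A,B}$) is divisible by $p^2$, so that the tail of the sieve is negligible. Inserting the main term of the previous step into the sieve yields $\sum_{H(E)<X}(\#S_5(E)-1)\sim 5\cdot\#\{E:H(E)<X\}$, where the constant $5$ is the resulting product of the archimedean and non-archimedean local densities; evaluating this Euler product --- consistently with the values $2,3,6$ already found for $n=2,3,4$, i.e.\ with $\sigma(n)-1$ --- gives exactly $5$. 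Adding back the trivial Selmer element gives average size $1+5=6$, as asserted.

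The principal obstacle is the cusp analysis: for $G=\SL_5\times\SL_5$ acting on the $50$-dimensional space $V$, the fundamental domain has a far more intricate cusp structure than for the smaller groups occurring in the cases $n\le 4$, so the combinatorial work needed to show that all but $o(X^{5/6})$ of the cuspidal integer points are reducible --- and to enumerate exactly the contribution of the borderline subregions, those whose irreducible points are not negligible --- is the substantive new input. The uniformity estimate in the sieve, which must be strong enough to survive summation over all primes up to a small power of $X$, is a second demanding technical point.
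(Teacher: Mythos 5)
Your outline matches the paper's proof step for step: parametrize nontrivial $5$-Selmer elements by orbits on $\Z^5\otimes\wedge^2\Z^5$ via the sub-Pfaffian construction, count irreducible orbits of bounded height by Bhargava's averaging/geometry-of-numbers method with a careful cuspidal analysis, sieve to local solubility using a squarefree-discriminant tail estimate, and identify the resulting constant as the Tamagawa number $5$ of the acting group. Two small points of divergence worth noting: the paper works not with $\SL_5\times\SL_5$ but with a determinant-one quotient of $\GL_5\times\GL_5$ chosen so that stabilizers are exactly $E[5]$ (which is what makes the local mass compute cleanly to $\tau(G)=5$, and the parametrization is by $G_\Q$-classes of integral vectors, with a weight correcting for $G_\Z$-orbit multiplicity, rather than by $G_\Z$-orbits outright); and where you flag the cusp analysis as the substantive new input, the paper's actual contribution there is a uniform ``certification'' scheme (the monomial factors $\pi_\ZZ$ in Tables 1--2) that compresses what would otherwise be a case-by-case argument over thousands of cuspidal strata into a short, verifiable table.
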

 We note that Theorem~\ref{mainellip} also confirms a case of the
 Poonen--Rains heuristics~\cite[Conjecture~1.1(b)]{PR}, 
 which predict in particular that for any
 prime number $p$, the average
size of the $p$-Selmer group of elliptic curves is $p+1$. 

We actually prove a stronger version of Theorem~\ref{mainellip},
where we determine the average size of the $5$-Selmer group over 
elliptic curves whose defining coefficients satisfy any finite set of
congruence conditions:

\begin{theorem}\label{ellipcong}
  When elliptic curves $E:y^2=x^3+Ax+B$ over $\Q$, in any family
  defined by finitely many congruence conditions on the coefficients
  $A$ and $B$, are ordered by height, the average size of the
  $5$-Selmer group $S_5(E)$ is~$6$.
\end{theorem}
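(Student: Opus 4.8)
The plan is to follow the by-now-standard orbit-parametrization approach pioneered in \cite{BS}, \cite{TC}, and \cite{foursel}, now adapted to the case $n=5$. First I would recall the algebraic heart of the matter: elements of the $5$-Selmer group of an elliptic curve $E_{A,B}/\Q$ should be parametrized by orbits of a representation of an algebraic group having the right invariant theory, namely one whose ring of invariants is a polynomial ring in two generators of the appropriate degrees to recover $A$ and $B$ (up to normalization, degrees $4$ and $6$, or their weighted analogues). For $n=5$ the relevant space is the space of quintuples of $5\times 5$ skew-symmetric matrices (equivalently $\wedge^2(\mathbf{5})\otimes \mathbf{5}$) under the action of $\GL_5$, or a suitable variant; I would set up the precise integral representation $V_\Z$, identify its two polynomial invariants $A$ and $B$, and prove an arithmetic parametrization: nonzero $\GL_5(\Z)$-orbits on $V_\Z$ having invariants $(A,B)$ (and satisfying appropriate maximality/soluble-at-every-place conditions) correspond bijectively to elements of $S_5(E_{A,B})$, or more precisely to pairs consisting of a $5$-Selmer element together with some additional rigidifying data that gets averaged away. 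This step requires a careful local analysis at each prime $p$ to match the local orbit count with $|H^1(\Q_p, E[5])|$-type quantities and to account for the contribution of curves with bad or additive reduction; I would prove the key local identity that the relevant weighted sum of $p$-adic orbit densities equals the appropriate local factor, so that the product over all $p$ telescopes.

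Next I would carry out the geometry-of-numbers count. I would construct a fundamental domain for the action of $\GL_5(\Z)$ on $V_\R$, and use Bhargava's averaging technique (averaging over a compact part of the group and a thickened region, together with the cutting-off of the cusp) to count the number of integral orbits with bounded invariants, i.e. with $H(E_{A,B}) < X$. The main term comes from integrating over the main body of the fundamental domain and will, after the orbit-count bookkeeping and the congruence sieve, produce a main constant; the cusp region must be shown to contribute only irreducible elements corresponding to the identity Selmer element (the ``trivial'' orbit), or else a negligible lower-order term. I would then impose the finitely many congruence conditions on $A,B$ by a standard sieve, which changes only the archimedean and finitely many non-archimedean local factors and hence does not affect the average; combined with the local mass formula this yields average size $\sum_{d\mid 5} d = 6$ uniformly over all such congruence families, giving Theorem~\ref{ellipcong} (and Theorem~\ref{mainellip} as the special case of no congruence conditions).

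The hard part will be twofold. First, the uniformity estimates needed for the sieve: one must show that the number of integral orbits that are not ``locally soluble everywhere'' or that arise from non-maximal rings, when counted with the appropriate $p$-adic weights, can be bounded uniformly enough in $p$ to interchange the sieve with the limit $X\to\infty$; this typically requires a quantitative version of the count with power-saving or at least strong enough error terms, often the most technically demanding ingredient. Second, and specific to the $5$-case, is establishing the arithmetic parametrization itself — proving that the $\GL_5(\Z)$-orbits genuinely see all of $S_5(E)$ with the correct multiplicity, including a delicate class-group/cohomological argument (à la the ``$\Q$-soluble orbits'' analysis) showing that every locally soluble orbit is globally integral, and handling the stabilizer $E[5]$ correctly so that the factor counted is exactly $(|S_5(E)| - 1)$ above the trivial orbit, yielding average $|S_5(E)| = 1 + (\text{average number of nontrivial orbits}) = 1 + 5 = 6$. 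I expect the cusp analysis for this $50$-dimensional representation to be the most laborious computational step, but the conceptual obstacle is the global solubility/parametrization statement.
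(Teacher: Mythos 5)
Your proposal follows the same strategy as the paper: parametrize nontrivial $5$-Selmer elements via Fisher's correspondence between locally soluble integral orbits of the invariant-theoretic representation $\mathbf{5}\otimes\wedge^2(\mathbf{5})$ (under a twisted form of $\SL_5\times\SL_5$, rather than $\GL_5$ alone as you wrote, but this is the ``suitable variant'' you anticipated) and $S_5(E_{A,B})\setminus\{1\}$, then count orbits of bounded invariants by Bhargava's averaging-over-a-compact geometry-of-numbers method with a cusp cutoff, and finally run a squarefree sieve with a local mass computation so that the main term is governed by the Tamagawa number $\tau(G)=5$, giving average $1+5=6$. The technical crux you flag — uniformity estimates for the sieve, and the global integrality/solubility of orbits with stabilizer exactly $E[5]$ — is precisely where the paper invests its effort (Theorems 3.14, 2.5–2.6, and the partitioned cuspidal analysis of Section 3.3), so your sketch matches the paper's proof in both outline and emphasis.
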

Thus the average size of the 5-Selmer groups of elliptic curves in any
congruence family is independent of the family. 

We use Theorem~\ref{ellipcong},
together with some further ingredients to be described below, 
to obtain a number of results on the 
distribution of ranks of elliptic curves.  
The rank distribution conjecture, due to Goldfeld~\cite{G1} and
Katz--Sarnak~\cite{KS} (see also \cite{BMSW} for a beautiful survey,
where it is termed the ``minimalist conjecture''), states that the
average rank of elliptic curves should be 1/2, with 50\% of curves
having rank 0 and 50\% having rank 1.  However, prior to the work
\cite{BS} giving the average size of the 2-Selmer group of elliptic curves, 
it was not known unconditionally that the average rank of
elliptic curves is even finite. (Conditional on GRH and BSD, a finite
upper bound of 2.3 on the average rank was demonstrated by
Brumer~\cite{Brumer}; improved conditional
upper bounds of 2.0 and 1.79 were given by Heath-Brown~\cite{HB} and more recently by
Young~\cite{Young}, respectively.) 

In this article, we give the first proof that the average rank of
elliptic curves over $\Q$ is less than 1:

\begin{theorem}\label{thavgrank}
  When elliptic curves over $\Q$ are ordered by height, their
  average rank is $<.885$.
\end{theorem}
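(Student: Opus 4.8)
The plan is to deduce Theorem~\ref{thavgrank} from three inputs: the $5$-Selmer average of Theorem~\ref{ellipcong}, the $5$-parity theorem, and the root-number equidistribution on a family of density at least $55\%$ announced in the introduction.

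First, the facts we use about an individual curve $E/\Q$. Write $d_5(E):=\dim_{\F_5}S_5(E)$, so $|S_5(E)|=5^{d_5(E)}$, and let $r(E)$ and $w(E)\in\{\pm1\}$ be the Mordell--Weil rank and the global root number. Since $E(\Q)/5E(\Q)$ injects into $S_5(E)$ and has order at least $5^{r(E)}$, we get $r(E)\le d_5(E)$. On the other hand, the $5$-parity conjecture --- a theorem for elliptic curves over $\Q$ --- combined with the standard fact that $d_5(E)$ has the same parity as the corank of the $5^{\infty}$-Selmer group (the Cassels--Tate pairing forces this), gives $(-1)^{d_5(E)}=w(E)$. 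Hence if $w(E)=-1$ then $d_5(E)$ is odd, so $|S_5(E)|\ge 5$; and if $w(E)=+1$ then $d_5(E)$ is even, so $|S_5(E)|\in\{1,25,625,\dots\}$. Everything else is bookkeeping on the joint distribution of $\bigl(d_5(E),w(E)\bigr)$ as $E$ ranges over suitable families.

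Second, the key elementary lemma. Let $\mathcal T$ be any family of elliptic curves over which $|S_5(E)|$ has average $6$. Then (a) the average of $r(E)$ over $\mathcal T$ is at most $21/20$; and (b) if moreover the root number is equidistributed over $\mathcal T$, i.e.\ $w(E)=+1$ for a density $\tfrac12$ of $\mathcal T$ and $w(E)=-1$ for the other density $\tfrac12$, then the average of $r(E)$ over $\mathcal T$ is at most $3/4$. Since $r\le d_5$, in each case it suffices to bound the average of $d_5$, and this is a finite linear program. For (a) one maximizes $\sum_k k\,q_k$ subject to $q_k\ge0$, $\sum_k q_k=1$, $\sum_k 5^k q_k=6$, where $q_k$ is the density within $\mathcal T$ of curves with $d_5(E)=k$; since the feasible polytope's vertices are supported on at most two values of $k$, a short computation gives the optimum $21/20$ (attained at $q_1=\tfrac{19}{20},\ q_2=\tfrac1{20}$). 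For (b) one imposes in addition the parity constraint, splitting $q_k=q_k^{+}+q_k^{-}$ with $q_k^{+}=0$ for $k$ odd, $q_k^{-}=0$ for $k$ even, and $\sum_k q_k^{+}=\sum_k q_k^{-}=\tfrac12$; the optimum becomes $3/4$ (attained at $q_0^{+}=\tfrac38,\ q_2^{+}=\tfrac18,\ q_1^{-}=\tfrac12$).

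Third, the assembly. Let $\G$ be the density-$\delta$ family on which the root number is equidistributed, so $\delta\ge11/20$, and arrange $\G$ (hence also $\G^{c}$) to be cut out by congruence conditions mild enough that the \emph{uniform} form of Theorem~\ref{ellipcong} applies to it; then $|S_5(E)|$ has average exactly $6$ over $\G$, and, since it has average $6$ overall, also over $\G^{c}$. Applying part (b) of the lemma to $\G$ and part (a) to $\G^{c}$, the average rank of all elliptic curves is at most $\delta\cdot\tfrac34+(1-\delta)\cdot\tfrac{21}{20}=\tfrac{21}{20}-\tfrac{3}{10}\delta\le\tfrac{21}{20}-\tfrac{3}{10}\cdot\tfrac{11}{20}=\tfrac{177}{200}=0.885$, and a small additional argument (one in fact has $\delta>11/20$, and the extremal distributions above cannot all be realized simultaneously) upgrades this to the strict inequality $<.885$. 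The main obstacle is the italicized clause: since $\G$ is defined by infinitely many congruence conditions, one needs a version of Theorem~\ref{ellipcong} uniform over congruence conditions modulo arbitrarily large primes, which rests on a tail estimate bounding the contribution to the average of $|S_5(E)|$ coming from curves lying in small residue classes modulo large primes. (The construction of $\G$ itself --- the root-number equidistribution --- is the other principal ingredient, carried out separately in the paper.)
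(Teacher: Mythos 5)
Your argument reproduces the paper's proof of Theorem~\ref{thavgrank}: bound rank by $5$-Selmer rank, invoke Dokchitser--Dokchitser $5$-parity (Theorem~\ref{thDD}), apply the average-rank bound $3/4$ on the root-number-equidistributed family of Theorem~\ref{fam} and $21/20$ on its complement, and average by density. Your two linear programs, with extremal distributions $q_1=\tfrac{19}{20},\,q_2=\tfrac1{20}$ and $q_0^{+}=\tfrac38,\,q_2^{+}=\tfrac18,\,q_1^{-}=\tfrac12$, are exactly the content of Propositions~\ref{avgbound1} and~\ref{avgbound2}, where the paper's ad hoc inequalities $20r-15\le 5^r$, $12n+1\le 5^n$ ($n$ even), $60n-55\le 5^n$ ($n$ odd) serve as the LP dual certificates; and the uniformity issue you flag for infinitely many congruence conditions is precisely what Theorem~\ref{thsqfree} and the ``large family'' formulation of Theorem~\ref{ellipall} address. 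The one correction is numerical: Theorem~\ref{fam} gives density $\delta>55.01\%$, not merely $\ge 11/20$, so $.5501\cdot\tfrac34+.4499\cdot\tfrac{21}{20}=.88497<.885$ is immediate and the additional argument you defer to for strictness is unnecessary.
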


The rank distribution/minimalist conjecture predicts
that elliptic curves over $\Q$ should tend to have rank either 0 or 1.  We prove that
this is the case for the vast majority of elliptic curves over $\Q$:

\begin{theorem}\label{rank01}
When elliptic curves over $\Q$ are ordered by height, a density of 
at least $83.75\%$ have rank $0$ or $1$.
\end{theorem}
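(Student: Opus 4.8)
The plan is to deduce Theorem~\ref{rank01} from Theorem~\ref{ellipcong} (or rather from its proof, which also yields the average size of the $5$-Selmer group over any congruence family) combined with the new equidistribution result for root numbers mentioned in the introduction. The starting point is the standard observation that if $E/\Q$ has Mordell--Weil rank $r$, then $\#S_5(E)\geq 5^{\min(r,\,?)}$; more precisely $\dim_{\F_5} S_5(E) \geq r$, so $\#S_5(E)\geq 5^r$. Hence an elliptic curve of rank $\geq 2$ contributes at least $25$ to the $5$-Selmer count, an elliptic curve of rank $1$ contributes at least $5$ (and its $5$-torsion-free Selmer part forces an extra factor, so at least $5$ plus the contribution of $\Sha$), and a rank-$0$ curve contributes at least $1$.

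First I would set up the bookkeeping carefully. Let $\rho_r$ denote the (upper) density of elliptic curves of rank exactly $r$, and let $\rho_{\geq r}$ be the density of curves with rank at least $r$. Since the average of $\#S_5(E)$ is $6$ (Theorem~\ref{ellipcong}, applied to the full family), and since $\#S_5(E) = 5^{r(E)} \cdot (\text{a nonnegative power-of-}5\text{ factor from }\Sha[5])\geq 5^{r(E)}$, we get
\[
6 = \mathrm{Avg}\,\#S_5(E) \;\geq\; \rho_0\cdot 1 + \rho_1\cdot 5 + \rho_{\geq 2}\cdot 25.
\]
Together with $\rho_0+\rho_1+\rho_{\geq 2}=1$ this already bounds $\rho_{\geq 2}$, but not well enough on its own. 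The improvement comes from using parity: the root number $w(E)=(-1)^{r(E)}$ conjecturally, and unconditionally $w(E)=+1$ forces $\dim_{\F_5}S_5(E)$ to be even only under BSD — so instead one uses the genuinely unconditional input, namely that there is a subfamily $T$ of density $\geq 55\%$ on which $w(E)$ is equidistributed (i.e.\ $w(E)=+1$ for half of $T$ and $-1$ for half of $T$). On the half of $T$ with $w(E)=-1$, the curve has odd analytic rank, hence (by work of e.g.\ Dokchitzer--Dokchitzer / Nekov\'a\v{r}, or just by the parity of the $5$-Selmer rank which is what is actually needed) $\dim_{\F_5}S_5(E)$ is odd, so $\#S_5(E)\geq 5$; thus a definite positive density of curves contribute at least $5$. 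Feeding this back into the averaging inequality, while tracking $\rho_0,\rho_1,\rho_{\geq 2}$ and the density of $T$, pins $\rho_{\geq 2}$ down to at most $1-0.8375 = 0.1625$, giving $\rho_0+\rho_1\geq 83.75\%$.

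Concretely, the argument I would write is an optimization: maximize $\rho_{\geq 2}$ subject to (i) $\rho_0+\rho_1+\rho_{\geq 2}=1$; (ii) $\rho_0\cdot 1 + \rho_1\cdot 5 + \rho_{\geq 2}\cdot 25 \leq 6$; (iii) among the $\geq 55\%$ of curves in the equidistributed family $T$, exactly half have odd $5$-Selmer rank hence $\#S_5\geq 5$, which sharpens (ii) to something like $\rho_0 + \rho_1\cdot 5 + \rho_{\geq 2}\cdot 25 + (\text{extra }\geq 4\text{ on }\geq 27.5\%\text{ of curves forced to have }\#S_5\geq 5\text{ while possibly of rank }0) \leq 6$. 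The numerology is designed so the linear program returns $\rho_{\geq 2}\leq 0.1625$. I would also need the elementary fact that $S_5(E)$ being a finite abelian $5$-group of $\F_5$-dimension $d$ has size $5^d\geq 5$ once $d\geq 1$, and the consistency check that curves of rank $0$ with nontrivial $\Sha[5]$ already have $\#S_5\geq 25$ (since $\Sha[5]$ has square order), which actually helps the bound. Finally, Theorem~\ref{thavgrank}'s constant $0.885$ and this theorem's $83.75\%$ should come out of the \emph{same} linear program with the \emph{same} inputs, so I would prove them together, reading off the average rank bound as $\sum_r r\,\rho_r \leq 1\cdot\rho_1 + 2\cdot\rho_{\geq 2} + (\text{tail})$ and the density bound as $1-\rho_{\geq 2}$.

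The main obstacle is input (iii): one must know unconditionally that a positive proportion of curves have \emph{odd $5$-Selmer rank} (not merely odd analytic rank, and not conditionally on BSD). This requires combining the root-number equidistribution statement with a parity theorem relating the parity of $\dim_{\F_5}\mathrm{Sel}_5(E)$ to the root number $w(E)$ — the $p$-parity conjecture, which is known unconditionally for $p=5$ by the work of the Dokchitzers (and Nekov\'a\v{r}, C\'esnavi\v{c}ius) building on Kato. I would need to cite that carefully and check it applies to the family $T$. A secondary, more technical point is making sure the subfamily $T$ of density $\geq 55\%$ is compatible with taking averages of $\#S_5$ over congruence families — i.e.\ that $T$ can be taken to be (a union of) congruence families, or at least that the averaging in Theorem~\ref{ellipcong} is robust enough to restrict to $T$ — so that the two ingredients can genuinely be superimposed rather than merely estimated separately; this is where one has to be careful not to double-count or lose constants, and it is the source of the slightly unlovely numbers $0.885$ and $83.75\%$.
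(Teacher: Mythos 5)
Your high-level plan is the same as the paper's: combine the $5$-Selmer average (Theorem~\ref{ellipall}) with the Dokchitser--Dokchitser $p$-parity theorem and a family with equidistributed root number. You also correctly flag the two delicate points: that $T$ must be a union of congruence (``large'') families so that the $5$-Selmer average can be applied to it, and that one must use the parity of $\dim_{\F_5}\mathrm{Sel}_5$, not analytic rank. All of that matches the paper.

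However, there is a genuine gap in your bookkeeping. Your constraint (iii) is not a well-formed linear inequality: the curves forced to have $\#S_5\geq 5$ by odd Selmer parity could be exactly the rank-$1$ curves already counted, so the ``extra $\geq 4$'' contribution is not actually an independent addition to the left-hand side of (ii), and the claimed linear program does not follow from the stated inputs. What is actually needed is to apply the $5$-Selmer average \emph{twice}: once on the equidistributed family $F$ of density $\geq 55.01\%$ (Theorem~\ref{fam}) and once on its complement. On $F$, the parity of $\dim_{\F_5}\mathrm{Sel}_5$ is odd for exactly half the curves, so writing $x_0,x_1$ for the lower densities of $5$-Selmer rank $0$, $1$ within $F$, one gets
$$x_0 + 25(\tfrac12-x_0) + 5\bigl(x_1 + 25(\tfrac12-x_1)\bigr)\leq 6,$$
which with $x_1\leq 1/2$ yields $x_0+x_1\geq 7/8$ (Proposition~\ref{avgbound2}(b)); on the complement one only has the unconditional $\#S_5\geq 25$ when Selmer rank $\geq 2$, giving density $\geq 19/24$ of Selmer rank $\leq 1$ (Proposition~\ref{avgbound1}(b)). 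Combining by the density of $F$ gives $.5501\cdot\tfrac78+.4499\cdot\tfrac{19}{24}\geq 0.8375$. Your version, which tries to run a single optimization on the full family with an awkward parity correction, does not reproduce these bounds. (Also, the remark about $\Sha[5]$ having square order is not used in the argument and, unconditionally, is not available; it should be dropped.)
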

In~\cite{TC}, we showed that a positive proportion of elliptic curves
have rank 0; however, the proportion that we demonstrated there was quite
small.  As a consequence of our methods here, we are able to deduce
that a fairly significant proportion of elliptic curves have rank 0:

\begin{theorem}\label{rank0}
When elliptic curves over $\Q$ are ordered by height, a density
of at
least $20.62\%$ have rank $0$. 
\end{theorem}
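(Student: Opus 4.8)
The plan is to deduce Theorem~\ref{rank0} from Theorem~\ref{ellipcong}, the root-number equidistribution statement quoted in the introduction, and the $p$-parity theorem, exploiting the elementary fact that $|S_5(E)|=1$ forces $\mathrm{rank}(E)=0$. Concretely: from the exact sequence $0\to E(\Q)/5E(\Q)\to S_5(E)\to \SH(E)[5]\to 0$ one sees that $|S_5(E)|=1$ implies $E(\Q)/5E(\Q)=0$, hence $\mathrm{rank}(E)=0$; so it suffices to bound from below the density of curves with trivial $5$-Selmer group. To turn the $p$-parity theorem into a statement about $S_5$, I would first discard the density-zero set of curves possessing a rational $5$-torsion point (these lie in a one-parameter subfamily). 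For a curve with $E(\Q)[5]=0$ the above sequence gives $\dim_{\F_5}S_5(E)=\mathrm{rank}(E)+\dim_{\F_5}\SH(E)[5]$; since the Cassels--Tate pairing makes $\SH(E)/\SH(E)_{\mathrm{div}}$ a group of the shape $H\oplus H$, this dimension has the same parity as the $5^\infty$-Selmer corank $s_5(E)=\mathrm{rank}(E)+\mathrm{corank}_{\Z_5}\SH(E)[5^\infty]$. The $p$-parity theorem (known for every elliptic curve over $\Q$ and every prime $p$, by the work of T.\ and V.\ Dokchitser) asserts $(-1)^{s_5(E)}=w(E)$, the global root number. Hence every curve with $E(\Q)[5]=0$ and $w(E)=+1$ has $|S_5(E)|\in\{1,25,625,\dots\}$, and so has rank $0$ as soon as $|S_5(E)|<25$.

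For the counting step, let $\mathcal{T}$ be the family of density $\delta\ge 0.55$, cut out by congruence conditions, on which the root number is equidistributed, and let $\mathcal{T}^{\pm}\subset\mathcal{T}$ denote the subfamilies with $w(E)=\pm1$; thus each of $\mathcal{T}^{+},\mathcal{T}^{-}$ has density $\tfrac12$ inside $\mathcal{T}$. By Theorem~\ref{ellipcong} the average of $|S_5(E)|$ over $\mathcal{T}$ equals $6$; writing $a$ and $b$ for the averages over $\mathcal{T}^{+}$ and $\mathcal{T}^{-}$, this yields $a+b=12$ (with $\liminf$/$\limsup$ in place of limits if needed). Every curve in $\mathcal{T}^{-}$ has $w=-1$, hence — after again discarding a density-zero set — odd $\dim_{\F_5}S_5$ and $|S_5|\ge5$, so $b\ge5$ and therefore $a\le7$. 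Since for $E\in\mathcal{T}^{+}$ the value $|S_5(E)|$ lies in $\{1\}\cup\{25,125,\dots\}$ (after discarding a density-zero set), the inequality $a\ge 1\cdot\mathrm{density}_{\mathcal{T}^+}(|S_5|=1)+25\cdot\mathrm{density}_{\mathcal{T}^+}(|S_5|\ge25)$ gives
\[
\mathrm{density}_{\mathcal{T}^{+}}(|S_5(E)|=1)\ \ge\ 1-\frac{a-1}{25-1}\ \ge\ 1-\frac{6}{24}\ =\ \frac34 .
\]
Each such curve has rank $0$, so the overall density of rank-$0$ elliptic curves is at least $\delta\cdot\tfrac12\cdot\tfrac34\ge 0.55\cdot 0.375=0.20625>0.2062$, which is Theorem~\ref{rank0}.

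The deduction above is short once the ingredients are granted, so the main obstacle is not here but in the two deep inputs — Theorem~\ref{ellipcong} itself and the $55\%$ root-number equidistribution theorem — together with the (routine but necessary) verification that the $p$-parity theorem and the Cassels--Tate pairing facts apply to the curves at hand. Within the deduction, the one genuinely load-bearing step is $a\le7$: it requires not merely $\mathbb{E}_{\mathcal{T}}|S_5|=6$ but also the sharp parity-driven bound $b\ge5$. It is a useful sanity check that $0.55\times\tfrac12\times\tfrac34$ returns the claimed $20.62\%$ essentially exactly; this indicates the argument is tight given the present ingredients, since replacing $a\le7$ by the trivial $a\le12$ would yield only about $14.9\%$.
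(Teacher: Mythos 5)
Your proposal is correct and follows essentially the same route as the paper: apply the average-Selmer result to the $\geq 55\%$ family with equidistributed root number, use Dokchitser--Dokchitser to force $|S_5|\geq 5$ on the odd-root-number half, and then extract the density of $|S_5|=1$ on the even-root-number half from the constraint that the average size is $6$. Your split into conditional averages $a$ and $b$ with $a+b=12$, $b\geq 5$, $a\leq 7$ is an equivalent repackaging of the paper's single inequality $x_0 + 25(1/2-x_0) + 5/2\leq 6$, and both yield $x_0\geq 3/8$ and hence $0.55\cdot 3/8 \geq .2062$.

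One minor overcomplication: you route the parity statement through the $5^\infty$-Selmer corank and the Cassels--Tate pairing on $\SH/\SH_{\rm div}$. That is a valid form of $p$-parity, but the version of Dokchitser--Dokchitser quoted in the paper (Theorem~\ref{thDD}) is already phrased directly in terms of $s_5(E)=\dim_{\F_5}S_5(E)$ minus $t_5(E)=\dim_{\F_5}E(\Q)[5]$, so after discarding the density-zero set with $E(\Q)[5]\neq 0$ you get the parity of $\dim_{\F_5}S_5(E)$ outright without invoking Cassels--Tate. Otherwise the deduction, the arithmetic, and the identification of the load-bearing inequality $a\leq 7$ (equivalently $b\geq 5$) match the paper.
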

If the Tate--Shafarevich groups of elliptic curves are finite, then
our methods also demonstrate that a proportion of at least $26.12\%$ of elliptic
curves have rank 1.

We now describe some of the methods behind the proofs of
Theorems~\ref{mainellip}--\ref{ellipcong} and \ref{thavgrank}--\ref{rank0}.  The key algebraic ingredient in
proving Theorems \ref{mainellip} and \ref{ellipcong} is a parametrization of elements  of
the 5-Selmer group of an elliptic curve.  Recall that an
element in the $5$-Selmer group of an elliptic curve $E$ may be
viewed as a locally soluble $5$-covering of $E$. Given any integer
$n\geq 1$,
an {\it $n$-covering} of
an elliptic curve $E/\Q$ is a genus one curve $C/\Q$ equipped with maps
$\phi:C\to E$ and
$\theta:C\to E$, where $\phi$ is an isomorphism defined over $\C$ and
$\theta$ is a degree $n^2$ map defined over $\Q$, such that the following
diagram commutes:
$$\xymatrix{E \ar[r]^{[n]} &E\\C\ar[u]^\phi\ar[ur]_\theta}$$
An $n$-covering $C$ of $E$ is said to be {\it soluble} if it has a
rational point, and {\it locally soluble} if it has a point over every
completion of $\Q$. 

Cassels \cite{Cassels} showed that any locally soluble $n$-covering of
$E$ admits a rational divisor of degree $n$, yielding a map $C\to
\P^{n-1}$ (which gives an embedding once $n\geq 3$).  In the case $n=2$,
we obtain double cover $C\to\P^1$ ramified at 4 points, and thus we may
describe 2-coverings $C$ of elliptic curves $E$ over $\Q$ via binary
quartic forms over $\Q$; this perspective on 2-coverings, which played
an important role in the original rank computations of Birch and
Swinnerton-Dyer~\cite{BSD}, was also the key to the work in \cite{BS}.  In
the cases $n=3$ and $n=4$, one obtains a genus one curve $C$ embedded 
in $\P^2$ or $\P^3$, respectively, which may be described as the locus of zeros in
$\P^2$ of a ternary cubic form or by a complete intersection of two
quadrics in $\P^3$, respectively (see, e.g., \cite{MMM} for an
excellent treatment over general fields).  
These geometric descriptions of
genus one normal curves of degrees 3 and 4 indeed played key roles in the
works on 3- and 4-Selmer groups in \cite{TC} and \cite{foursel}.  

In the case $n=5$, however, a genus one curve in $\P^4$ is {\it not} a
complete intersection.  Nevertheless, a genus one curve in $\P^4$ may
be expressed---essentially uniquely---as an intersection of the five 
quadrics defined by the
$4\times 4$ sub-Pfaffians of a $5\times 5$ skew-symmetric matrix of linear
forms!  Conversely, given a generic $5\times 5$ matrix of linear forms in
five variables, the $4\times 4$ sub-Pfaffians yield five quadrics,
whose intersection gives a genus one curve in $\P^4$.  These facts are
essentially classical over an algebraically closed field; over a
general field, they follow from the seminal 
Buchsbaum--Eisenbud structure theorem (see \cite{BE1}, \cite{BE2}). 

The theory over arithmetic fields, such as $\Q$ and $\Q_p$, has been
subsequently developed in a beautiful series of papers by
Fisher~\cite{Fisher1,Fisher3,Fisher15,Fishermin,Fisher2}.  In particular, Fisher
relates the invariant theory of the representation
$V=5\otimes\wedge^2(5)$ of the group $G'=\SL(5)\times \SL(5)$ to
explicit formulae for 5-coverings of elliptic curves.  The
representation of $G'$ on $V$ is classically known to have a remarkable
invariant theory; it is one of the first ``exotic'' examples arising
in Vinberg's theory of $\theta$-groups~\cite{popovvinberg}, and arises
in the classification of coregular spaces~\cite{littelmann}, i.e.,
representations having a free ring of invariants.  The invariant ring
of the representation of $G'_\C$ on $V_\C$ is freely generated by two
invariants, which we denote by $I$ and $J$, and which are integral
polynomials in the entries of~$V$.  Fisher shows that every locally
soluble 5-covering of an elliptic curve $E_{A,B}$ can be represented
by a genus one curve in $\P^4$ corresponding to an element $v\in
V_\Q$, where the $I$ and $J$ invariants of $v$ agree with $A$ and
$B$, respectively (up to bounded powers of 2 and 3); even more
remarkably, this element $v\in V_\Q$ can in fact always be taken in
$V_\Z$ (\cite[Theorem~2.1]{Fishermin}).

To obtain an exact parametrization of $5$-coverings of elliptic
curves, it is necessary to use the action of a slightly different group $G$, as
defined in \eqref{eqg}; see \cite[\S4.4]{BhHo} for a detailed
explanation.  
It then follows that 5-Selmer elements of elliptic curves
$E_{A,B}$ having bounded height can be represented by certain
$G_\Z$-orbits on $V_\Z$ having bounded invariants $I$ and $J$. 
Therefore, to count the total number of 5-Selmer elements of elliptic
curves of bounded height, it suffices to count the number of
corresponding $G_\Z$-orbits on $V_\Z$ having bounded invariants $I$
and $J$.  
To carry out such a count, we adapt the methods of
\cite{dodpf} and \cite{BS}.  Specifically, we construct fundamental
domains for the action of $G_\Z$ on $V_\R$, and then count lattice
points having bounded invariants in these domains.  As usual, the
difficulties lie in the (numerous) cusps of such a fundamental domain.
We divide the fundamental domain into two parts, the ``main body'' and
the ``cuspidal region''.  We show that in the main body, only a
negligible number of lattice points correspond to identity 5-Selmer
elements.  Meanwhile, in the cuspidal region, we show that a
negligible number of points correspond to non-identity 5-Selmer
elements!  The latter is proven by partitioning the cuspidal region
into thousands of subregions, on each of which the argument of
\cite{dodpf} is then applied.  If we actually wrote out the proof for
each such subregion as in \cite{dodpf}, it would take hundreds of
pages!  Thus we introduce a new, more uniform method of certifying
that the argument works on each of these subregions, allowing us to
present a fairly short proof that is checkable by hand; see Section~3.
We expect that this method will also be useful in other contexts in
handling geometry-of-numbers difficulties in complex cuspidal regions.

We conclude from the above results that, in order to count the number
of non-identity 5-Selmer elements of elliptic curves having bounded
height, it suffices to count lattice points in the main body of the
fundamental region satisfying suitable congruence conditions,
\pagebreak so that
we are counting each non-identity 5-Selmer element exactly once.  This
is also accomplished via geometry-of-numbers arguments, together with
a suitable sieve using the results of \cite{geosieve}.  The sieve
reveals that the average number of non-identity elements in the
5-Selmer group of elliptic curves is the Tamagawa number $(=5)$ of the
group $G$.  We conclude that the average number of elements in the
5-Selmer groups of elliptic curves is $5+1=6$, proving
Theorem~\ref{mainellip}.  An analogous argument, and the latter sieve,
also allows us to prove Theorem~\ref{ellipcong}.

We now describe how Theorem \ref{thavgrank} is
deduced (the deduction of Theorems~\ref{rank01} and \ref{rank0} being similar).  First, we note that Theorem~\ref{mainellip} immediately yields an upper bound of 1.05 on the average rank of elliptic curves.  Indeed, recall that 
the $5$-Selmer group of an elliptic curve fits into the exact sequence
$$
0\to E(\Q) \to S_5(E) \to \SH_E[5] \to 0.
$$ 
If $r$ denotes the rank of an elliptic curve $E$, then the size of
the $5$-Selmer group of $E$ is an upper bound for $5^r$. Since
$20r-15\leq 5^r$ for any nonnegative integer $r$, we conclude by Theorem \ref{mainellip} 
that (the limsup of) the average rank $\bar r$ of elliptic curves, when ordered by height, must 
satisfy $20\bar r-15\leq 6$, whence $\bar r\leq 21/20$.  

To improve this bound further,
we observe that 
the bound of $1.05$ can be attained only if $95\%$ of elliptic curves
have rank $1$ and $5\%$ have rank $2$.  However, it is widely expected
that $50\%$ of elliptic curves should have even rank and $50\%$ should have odd
rank.  This is because the parity conjecture (implied by the Birch and
Swinnerton-Dyer conjecture) states that the rank of an elliptic curve
is even if and only if its root number is $+1$; furthermore, one expects that the root
number of elliptic curves should be
equidistributed. The parity conjecture has not been proven, but we may
instead use the remarkable result of Dokchitser and Dokchitser (\cite{DD}) which states that the parity of the $p$-Selmer
rank of an elliptic curve is determined by its root number.  This
result suffices for our purposes because Theorems \ref{mainellip} and
\ref{ellipcong} indeed yield bounds on not just the rank but the $5$-Selmer rank of elliptic curves.

Any result towards the equidistribution of root numbers of elliptic
curves would thus imply a better bound on the average rank.  However,
no cancellation in the root numbers of elliptic curves has been
established.  In~\cite{SW} and \cite{TC}, it was proved that there exist positive
proportion families of elliptic curves having equidistributed root number.

In this article, we prove that a {majority} of elliptic curves in fact
do have equidistributed root number:
\begin{theorem}\label{fam}
There exists a family $F$ of elliptic curves $E_{A,B}$, having density
greater than $55.01\%$ among all elliptic curves when ordered height\,
and defined by congruence conditions on $A$ and $B$, such that the
root number of elliptic curves in $F$ is equidistributed.
\end{theorem}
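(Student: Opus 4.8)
The plan is to construct $F$ out of quadratic twisting by $-1$: on a suitable congruence‑defined sublocus, the map $E_{A,B}\mapsto E_{A,-B}$ is a height‑preserving, fixed‑point‑free involution that negates the root number, which forces $w$ to be exactly equidistributed there.

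First recall the structure of the global root number $w(E)=\prod_v w_v(E)$, with $w_\infty\equiv-1$; for a prime $p\geq5$ one has $w_p(E)=1$ at good reduction, $w_p(E)=-\left(\frac{-c_6(E)}{p}\right)$ at multiplicative reduction, and $w_p(E)\in\{\left(\frac{-1}{p}\right),\left(\frac{-3}{p}\right)\}$, depending only on the Kodaira type, at additive reduction. For $E=E_{A,B}$ we have $c_4(E)=-48A$ and $c_6(E)=-864B$, and $E_{A,-B}$ is exactly the quadratic twist $E\otimes\left(\frac{-1}{\cdot}\right)$. Since $\Delta_{A,-B}=\Delta_{A,B}$, $c_4(E_{A,-B})=c_4(E_{A,B})$ and $c_6(E_{A,-B})=-c_6(E_{A,B})$, the curves $E_{A,\pm B}$ have the same Kodaira type at every prime. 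Comparing local root numbers: at every prime $q\geq5$ of good or additive reduction the factors agree; at a prime $q\geq5$ of multiplicative reduction the ratio $w_q(E_{A,-B})/w_q(E_{A,B})$ equals $\left(\frac{-c_6/(-c_6)}{q}\right)=\left(\frac{-1}{q}\right)$; and at the primes $2$ and $3$ the ratio depends only on $(A,B)$ modulo a fixed power of $6$. Using that $n\mapsto\left(\frac{-1}{n}\right)$ is precisely the nontrivial Dirichlet character modulo $4$, we obtain
$$\frac{w(E_{A,-B})}{w(E_{A,B})}\;=\;\rho(A,B)\cdot\left(\frac{-1}{M_{A,B}}\right),$$
where $M_{A,B}$ is the product of the primes $q\geq5$ of multiplicative reduction of $E_{A,B}$ and $\rho(A,B)=\pm1$ depends only on $(A,B)$ modulo a fixed power of $6$.

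The next step is to pin down the right‑hand side by congruence conditions; the one non‑congruential ingredient is $M_{A,B}$, the ``multiplicative radical'' of $\Delta_{A,B}$ away from $\{2,3\}$. Here I invoke the square‑free sieve of \cite{geosieve}: outside a family of negligible density one may assume $\Delta_{A,B}$ has no square divisor $q^2$ with $q\geq5$ outside a fixed finite set, and then every prime $q\geq5$ outside that set dividing $\Delta_{A,B}$ is automatically a prime of multiplicative reduction, so $M_{A,B}$ equals the odd part of $\Delta_{A,B}$ away from those primes times a factor controlled by congruences modulo $q^2$, $q$ in the finite set. Imposing congruence conditions modulo a fixed power of $6$ and modulo $q^2$ for each such $q$, together with a condition fixing the sign of $4A^3+27B^2$, fixes $\rho(A,B)$, the small‑prime part of $\Delta_{A,B}$, and $\Delta_{A,B}$ modulo $4$; quadratic reciprocity then makes $\left(\frac{-1}{M_{A,B}}\right)$ constant on each resulting class. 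Let $F$ be the union of those classes (inside the square‑free locus) on which $\rho\cdot\left(\frac{-1}{M}\right)\equiv-1$. Then $F$ is defined by congruence conditions, $\iota\colon E_{A,B}\mapsto E_{A,-B}$ preserves $F$ and $H$, satisfies $w(\iota(E))=-w(E)$, and is fixed‑point‑free on $\{B\neq0\}$; pairing curves under $\iota$ gives $\#\{E\in F:H(E)<X,\,w(E)=1\}=\#\{E\in F:H(E)<X,\,w(E)=-1\}$ for every $X$, i.e.\ $w$ is equidistributed on $F$.

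The main obstacle is quantitative: the involution argument, even after optimizing the congruence and sign choices, yields a family of density only slightly below $\tfrac12$, which falls short of the required $55.01\%$. The remaining few percent must be obtained by adjoining, on part of the complementary locus, curves having multiplicative reduction at some prime $p\equiv3\pmod4$ in a fixed finite set $S$, and running a companion equidistribution argument in which the sign change comes from toggling $B$ modulo $p$ (legitimate since $p\equiv3\pmod4$ makes $\left(\frac{-1}{p}\right)=-1$, hence flips $w_p=-\left(\frac{6B}{p}\right)$). Establishing equidistribution for this companion family amounts to showing that the fixed quadratic character $\left(\frac{\cdot}{p}\right)$ does not correlate with the rest of the root number over curves of bounded height; after the square‑free sieve, quadratic reciprocity converts the problematic Jacobi symbol $\left(\frac{6B}{\mathrm{rad}(\Delta)}\right)$ over the varying modulus $\mathrm{rad}(\Delta)$ into one of the shape $\left(\frac{A}{B}\right)$ up to congruence‑controlled factors, after which one must control the residual multiplicative factor and a two‑dimensional character sum. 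This analytic input, and the delicate density bookkeeping needed to verify that the union of the twist‑$(-1)$ family with this companion family can be made to exceed $55.01\%$ while both equidistribution arguments still apply, are where essentially all of the work lies.
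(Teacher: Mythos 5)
Your core strategy is correct and matches the paper's: work with the height-preserving involution $E\mapsto E_{-1}$ (i.e.\ $(A,B)\mapsto(A,-B)$) and arrange, by congruence conditions, that $d(E):=r(E)r(E_{-1})=-1$ on $F$, which forces exact equidistribution. Your local analysis---$d(E)=d_6(E)\,d_{1/6}(E)$ with $d_{1/6}(E)=\bigl(\tfrac{-1}{M}\bigr)$ where $M$ is the product of primes $p>3$ of multiplicative reduction---is also the paper's identity, stated there via the lemma that $d_p(E)=-1$ precisely when $E$ has multiplicative reduction at $p$ and $p\equiv 3\pmod 4$.

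However, your quantitative pessimism (``density only slightly below $1/2$'') and your proposed fix are both off. The bottleneck in your argument is that you try to make $\bigl(\tfrac{-1}{M}\bigr)$ itself a congruence-determined quantity, which forces you to restrict to the locus where $\Delta$ is squarefree away from a fixed finite set (so that $M$ is essentially the radical of $\Delta_{6'}$). This is wasteful: it cuts the density by a factor on the order of $\prod_p(1-c/p^2)$ and then by another $\approx\tfrac12$ for the sign of $\rho\cdot\bigl(\tfrac{-1}{M}\bigr)$. The paper never requires squarefreeness. Instead it compares $d_{1/6}(E)$ not to a target value but to $|\Delta_{6'}(E)|\bmod 4$: for a prime $p>3$ the contribution of $p$ to $d_{1/6}$ and to $|\Delta_{6'}|\bmod 4$ agree unless $E$ has multiplicative reduction with $v_p(\Delta)=2$ or additive reduction with $v_p(\Delta)=3$, events of density $O(1/p^3)$. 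Allowing up to two such exceptional primes (bounded, so still a finite union of large families) gives the condition $d_{1/6}(E)\equiv|\Delta_{6'}(E)|\pmod 4$ on a set $F_3$ of density $>96.68\%$. The parallel condition $d_6(E)\equiv\Delta_{6'}(E)\pmod 4$ at $\{2,3\}$ is read off Halberstadt's tables, giving $F_1$ of density $>59\%$ and its complement $F_2$ of density $>40\%$. Multiplying the two relations and using the sign of $\Delta$ makes the globally-varying quantity $\Delta_{6'}\bmod 4$ cancel, so $d(E)=-1$ is enforced purely by congruence conditions at $\{2,3\}$, congruence conditions at finitely many $p>3$, and a sign condition at infinity. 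Weighting the positive-discriminant ($1/5$) and negative-discriminant ($4/5$) subfamilies gives density $0.2\times 0.40914+0.8\times 0.58534>0.5501$.

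Finally, the ``companion family'' you sketch---adjoining curves on the complementary locus and deducing equidistribution from cancellation in a Jacobi-symbol sum $\bigl(\tfrac{6B}{\mathrm{rad}(\Delta)}\bigr)$ over bounded height---is exactly the kind of analytic input the paper explicitly points out is unavailable (the remark that it is not known whether $\sum\mu(-4A^3-27B^2)=o(X^{5/6})$). Your plan would therefore require a genuinely new theorem about root-number cancellation, whereas the paper shows this is unnecessary: the twist-by-$-1$ device, with the correct local bookkeeping, already crosses $55\%$ on its own.
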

Specifically, we construct $F$ so that, for every elliptic
curve $E\in F$, the quadratic twist $E_{-1}$ of $E$ is also in $F$
and, moreover, $E$ and $E_{-1}$ have opposite root numbers.  We show
that we can find such an $F$ whose density in the family of all
elliptic curves over $\Q$ is $>55.01\%$.  The construction makes key
use, in particular, of the work of Rohrlich~\cite{Rh} and
Halberstadt~\cite{Hal} on computations of local root numbers.

To the special family $F$ constructed in Theorem~\ref{fam}, we may
then apply Theorem~\ref{ellipcong}, along with the aforementioned
theorem of Dokchitser--Dokchitser~\cite{DD}.  (This explains why
Theorem~\ref{ellipcong}---the congruence version of
Theorem~\ref{mainellip}---is also critical in the proof of
Theorem~\ref{thavgrank}.) Together, they imply that the average rank
of elliptic curves in $F$ is at most $.75$ (indeed, this upper bound
can be attained only if $37.5\%$ of curves in $F$ have $5$-Selmer size
1, $50\%$ have size 5, and $12.5\%$ have size~25).  This yields an
upper bound of
$$.5501 \times .75+ .4499 \times 1.05 < .885$$
on the average rank of elliptic curves, yielding
Theorem~\ref{thavgrank}.  Similar arguments are used to obtain
Theorems~\ref{rank01} and \ref{rank0}; see the last section \S6 for
details. 

This paper is organized as follows. In Section 2, we describe the parametrization of elements of 5-Selmer groups of elliptic curves using quintuples of $5\times 5$ skew-symmetric matrices.  This follows essentially from the work of Fisher, although we must slightly modify the group action so that our counting and sieve methods work more effectively.  In Section 3, we then count integral orbits of bounded height in this representation in terms of volumes of certain fundamental domains.  In Section 4, we carry out the necessary sieve to count only 5-Selmer elements of elliptic curves, thereby proving Theorem~\ref{mainellip}; we also obtain Theorem~\ref{ellipcong}.  

In Section 5, we then turn to root numbers, and construct the family $F$ above, thereby proving Theorem~\ref{fam}.  Finally, we complete the proofs of Theorems~\ref{thavgrank}, \ref{rank01}, and \ref{rank0} in Section~6.

\section{Parametrization of elements in the $5$-Selmer groups of elliptic curves}

For an elliptic curve $E:y^2=x^3+Ax+B$ over $\Q$, we define the
quantities $I(E)$ and $J(E)$ by
\begin{equation}\label{eqEIJ}
  \begin{array}{rcl}
    I(E)&=&-3A,\\[.02in]
    J(E)&=&-27B.
  \end{array}
\end{equation} 
There invariants are related
to the classical invariants $c_4$ and~$c_6$ in the following way: we
have $I=3^4c_4$ and $J=2\cdot 3^6c_6$. We denote the elliptic curve
with invariants $I$ and $J$ by $E^{I,J}$. 
The primary purpose of this section is to describe a method to parametrize
elements of the $5$-Selmer groups of elliptic curves over $\Q$. We
also describe similar parametrizations for elliptic curves over $\R$ and
$\Q_p$.

For any ring $R$, let $V_R$ denote the space $R^5\otimes\wedge^5(R)$ of
quintuples of skew-symmetric  $5\times 5$ matrices with coefficients in $R$.
The group $\GL_5(R)\times\GL_5(R)$ acts on $V_R$ via:
$$
(g_1,g_2)\cdot
(A,B,C,D,E):=(g_1Ag_1^t,g_1Bg_1^t,g_1Cg_1^t,g_1Dg_1^t,g_1Eg_1^t)\cdot g_2^t,
$$
for $(g_1,g_2)\in \GL_5(R)\times\GL_5(R)$ and $(A,B,C,D,E)\in V_R$.
We define the {\it determinant} of an element
$(g_1,g_2)\in\GL_5(R)\times\GL_5(R)$ by $\det(g_1,g_2):=(\det
g_1)^2\det g_2$. Let $G_R$ denote the group 
\begin{equation}\label{eqg}G_R:=\{(g_1,g_2)\in\GL_5(R)\times\GL_5(R):\det(g_1,g_2)=1\}/\{(\lambda I_5,\lambda^{-2} I_5)\},\end{equation}
where $I_5$ denotes the identity element of $\GL_5(R)$ and $\lambda\in
R^\times$. It is then easy to check that the action of
$\GL_5(R)\times\GL_5(R)$ on $V_R$ descends to an action of $G_R$ on
$V_R$.

The ring of invariants for the action of $G_\C$ on $V_\C$ is freely generated by
two elements (see, e.g., \cite{popovvinberg}). In \cite{Fisher1}, the generators
of the ring of invariants of the above action are denoted by~$c_4$ and~$c_6$, and they have degrees 20 and 30 on $V$, 
respectively. For purposes of convenience, we consider the invariants $I$ and
$J$ given by $3^4c_4$ and $2\cdot 3^6c_6$, respectively.  We
then define the {\it discriminant}~$\Delta(v)$  of an element $v\in V_R$ having invariants $I$ and $J$ by
\begin{equation*}
    \Delta(v) :=\Delta(I,J):=(4I^3-J^2)/27;
\end{equation*}
one checks that $\Delta(v)$ is an integer polynomial of degree 60 in the 50 entries of $V$.

Let $K$ be a field of characteristic not equal to $2$, $3$, or
$5$. Given $v=(A,B,C,D,E)\in V_K$ having nonzero discriminant, let $Q_1,\ldots,Q_5$ be the five $4\times 4$ sub-Pfaffians of $v\cdot(t_1,\ldots,t_5)=At_1+Bt_2+Ct_3+Dt_4+Et_5$, i.e., $Q_i$ is the Pfaffian of the $4\times 4$ matrix obtained by removing the $i$th row and column of 
The 
intersection of the quadrics $Q_i(t_1,t_2,t_3,t_4,t_5)=0$ in $\P^4$ is generically a genus one curve $\mathcal{C}_v$,
whose Jacobian is given by the elliptic curve
$E:y^2=x^3-\frac{I}{3}x-\frac{J}{27}$ (see \cite[Proposition 2.3]{Fisher1}).

An element $v\in V_K$ is called {\it $K$-soluble} if $\mathcal{C}_v$ has a $K$-rational point, i.e., $\mathcal{C}_v(K)\neq\emptyset$. If $C_v$ is $K$-soluble, then it corresponds naturally to an element of $E(K)/5E(K)$, where $E$ is the Jacobian of $C$.  This leads naturally to the following two results, which are essentially due to Fisher~(cf.\ \cite[Theorem~6.1]{Fisher3}); however, as in \cite[\S4.4]{BhHo}, we use a slightly different group action so that the stabilizer of the group action is given exactly by the $5$-torsion subgroup $E(K)[5]$ of $E(K)$; this will be important in our applications.

\begin{theorem}\label{leme2ep}{\bf (\cite[Thm.\ 6.1]{Fisher3}, \cite[\S4.4]{BhHo})}
  Let $K$ be a field of characteristic not equal to $2$, $3$, or $5$. Let
  $E=E^{I,J}:y^2=x^3-\frac{I}{3}x-\frac{J}{27}$ be an elliptic curve over $K$.
  Then there exists a canonical bijection between elements of $E(K)/5E(K)$ and
  $G_K$-orbits of $K$-soluble elements in $V_K$ having invariants $I$
  and~$J$.
  Furthermore, the stabilizer in $G_K$ of any $($not necessarily
  $K$-soluble$)$ element in $V_K$, having invariants $I$ and $J$, is
  isomorphic to $E(K)[5]$.
\end{theorem}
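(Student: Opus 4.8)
The plan is to reduce everything to the known statement over the algebraic closure and then descend, treating the ``canonical bijection'' and the ``stabilizer'' claims in turn. First I would invoke Fisher's result, \cite[Theorem~6.1]{Fisher3}, which already establishes a bijection between the relevant soluble orbits and $E(K)/5E(K)$ for the group $G'=\SL_5\times\SL_5$ (equivalently $\PGL_5\times\PGL_5$ after quotienting), together with the identification of the geometric stabilizer. The content we must add is purely the passage from Fisher's group to our group $G$ defined in \eqref{eqg}: the point of the modified determinant condition $\det(g_1,g_2)=(\det g_1)^2\det g_2=1$ and the quotient by $\{(\lambda I_5,\lambda^{-2}I_5)\}$ is exactly to make the generic stabilizer equal to $E(K)[5]$ on the nose rather than up to a central ambiguity; this is explained in \cite[\S4.4]{BhHo} and I would follow that argument.

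For the stabilizer statement I would proceed as follows. Over $\bar K$, the stabilizer of a point $v\in V_{\bar K}$ with $\Delta(v)\ne 0$ under $\PGL_5\times\PGL_5$ is known (Fisher, Vinberg $\theta$-group theory) to be $E[5]$, where $E=E^{I,J}$ is the Jacobian of $\mathcal C_v$; concretely the identification arises because the stabilizer acts on the $5$-covering $\mathcal C_v\subset\P^4$ compatibly with the translation action of $E[5]$ on a genus one curve of degree $5$. The group $G$ sits in an exact sequence relating it to $\PGL_5\times\PGL_5$, but the extra $\mu_5$ or $\mu_5^2$ factors involved get absorbed precisely by the relation $\det(g_1,g_2)=1$ modulo $(\lambda I_5,\lambda^{-2}I_5)$, so that $\Stab_{G_{\bar K}}(v)=E[5](\bar K)$ with no correction. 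To get the stabilizer over $K$ itself rather than over $\bar K$, I would take Galois-fixed points: $\Stab_{G_K}(v)=\Stab_{G_{\bar K}}(v)^{\mathrm{Gal}(\bar K/K)}$, and check that the Galois action on the geometric stabilizer agrees with the natural Galois action on $E[5]$ (this is where one uses that the identification with $E[5]$ is canonical, i.e.\ functorial in $K$). This yields $\Stab_{G_K}(v)\cong E(K)[5]$ for every $v$ with invariants $I,J$, soluble or not.

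For the bijection, the strategy is orbit--stabilizer together with Galois cohomology. Fix a $\bar K$-point of the $G_{\bar K}$-orbit of $v$'s with invariants $I,J$ (there is a single such orbit over $\bar K$, since the ring of invariants is free on $I,J$ and a generic fiber is a single orbit; one must note the fiber over $(I,J)$ with $\Delta\ne 0$ is nonempty and homogeneous). The $K$-rational orbits inside this geometric orbit are then classified by $\ker\bigl(H^1(K,\Stab)\to H^1(K,G_K)\bigr)$, and the soluble ones correspond under this classification to the image of $E(K)/5E(K)\hookrightarrow H^1(K,E[5])$ via the Kummer sequence; this is exactly the matching Fisher proves, and it transports verbatim once the stabilizer has been pinned down as $E[5]$ rather than a central extension thereof. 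So the real work, and the main obstacle, is the bookkeeping in the previous paragraph: verifying that passing from $\SL_5\times\SL_5$ (or $\PGL_5\times\PGL_5$) to the group $G$ of \eqref{eqg} removes exactly the unwanted central automorphisms and does not introduce new ones, and that the resulting stabilizer identification is Galois-equivariant. This is a finite, elementary check with the center of $\GL_5$ and the determinant condition; I would carry it out by hand as in \cite[\S4.4]{BhHo}, and otherwise cite Fisher for all the geometric input about $5$-coverings, sub-Pfaffians, and the invariants $I,J$.
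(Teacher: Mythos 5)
Your proposal matches the paper's treatment: the paper does not prove this statement but cites Fisher's \cite[Thm.~6.1]{Fisher3} for the underlying bijection and \cite[\S4.4]{BhHo} for the passage from Fisher's group to the group $G$ of \eqref{eqg} so that the stabilizer is exactly $E(K)[5]$, and your sketch correctly identifies these two ingredients, the Galois-descent/orbit--stabilizer mechanism, and the central-automorphism bookkeeping as the content one must supply.
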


An element in $V_\Q$ is called {\it locally soluble} if it is
$\R$-soluble and $\Q_p$-soluble for all primes $p$.  

\begin{theorem}[\cite{Fisher3}, \cite{BhHo}]
\label{propselparam}
  Let $E/\Q$ be an elliptic curve. Then the elements in the $5$-Selmer
  group~of~$E$ are in bijective correspondence with $G_\Q$-equivalence
  classes on the set of locally soluble elements in~$V_\Q$ having
  invariants equal to $I(E)$ and $J(E)$.
\end{theorem}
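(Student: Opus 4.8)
The plan is to deduce the global parametrization from the field-by-field statement of Theorem~\ref{leme2ep} by patching local data, exactly as one does when identifying Selmer elements with locally soluble torsors. Recall that the $5$-Selmer group $S_5(E)$ sits in the exact sequence
$$
0 \to S_5(E) \to H^1(\Q, E[5]) \to \prod_v H^1(\Q_v, E[5]),
$$
so an element of $S_5(E)$ is precisely a class in $H^1(\Q, E[5])$ whose restriction to $H^1(\Q_v, E[5])$ lies in the image of $E(\Q_v)/5E(\Q_v)$ for every place $v$ of $\Q$. On the other hand, by the "furthermore" clause of Theorem~\ref{leme2ep} applied over $\Q$, the $G_{\Q}$-orbits of elements of $V_\Q$ with invariants $I(E), J(E)$ (not necessarily soluble) all have stabilizer $E(\Q)[5] = E[5]$ as a $G_{\overline\Q}$-set, so by the standard twisting principle (the orbits of a group on a variety with fixed stabilizer scheme $E[5]$ are classified by $H^1(\Q, E[5])$ once one such orbit is $\Q$-rational) these orbits are in bijection with a subset of $H^1(\Q, E[5])$; and the $\Q$-soluble orbits correspond, by the first clause of Theorem~\ref{leme2ep}, to the image of $E(\Q)/5E(\Q)$, i.e., to the zero class after the connecting map. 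Thus the first step is to make precise the correspondence: $G_{\Q}$-orbits on $V_\Q$ with invariants $(I,J)$ $\longleftrightarrow$ classes in $H^1(\Q, E[5])$ that become "trivial enough" to admit a rational representative vector, with $\Q$-soluble orbits mapping to the geometric-triviality locus.

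The second step is the local compatibility. For each place $v$, Theorem~\ref{leme2ep} over $\Q_v$ gives a bijection between $E(\Q_v)/5E(\Q_v)$ and $G_{\Q_v}$-orbits of $\Q_v$-soluble elements of $V_{\Q_v}$ with invariants $(I,J)$; and the same twisting argument identifies all $G_{\Q_v}$-orbits of elements with invariants $(I,J)$ with a subset of $H^1(\Q_v, E[5])$, the $\Q_v$-soluble ones being exactly the image of $E(\Q_v)/5E(\Q_v)$. The key point to check is that the restriction map on orbit spaces $V_\Q \rightsquigarrow V_{\Q_v}$ is compatible with the restriction $H^1(\Q, E[5]) \to H^1(\Q_v, E[5])$ — this is automatic because both the $H^1$ identification and the solubility criterion are defined functorially in the base field via the curve $\mathcal{C}_v$ and its Jacobian $E$. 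Granting this, an element $v \in V_\Q$ with invariants $(I,J)$ is locally soluble if and only if its image in $H^1(\Q, E[5])$ restricts into $\mathrm{Im}(E(\Q_p)/5E(\Q_p))$ for every prime $p$ and into $\mathrm{Im}(E(\R)/5E(\R))$ at the archimedean place — i.e., if and only if that class lies in $S_5(E)$. Conversely, a class in $S_5(E) \subset H^1(\Q, E[5])$ is locally soluble at every place, hence (by a theorem of Cassels cited in the introduction guaranteeing a rational degree-$5$ divisor on the corresponding torsor, or simply by the surjectivity built into Theorem~\ref{leme2ep} once enough local solubility is known) is represented by some $v \in V_\Q$; and that $v$ is then locally soluble by the forward direction. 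This gives the bijection $S_5(E) \longleftrightarrow \{$locally soluble $G_\Q$-orbits on $V_\Q$ with invariants $I(E), J(E)\}$.

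The main obstacle I anticipate is the bookkeeping around \emph{which} $H^1(\Q, E[5])$ classes admit a rational representative in $V_\Q$ at all: the twisting principle only immediately parametrizes orbits over a field where one base orbit is rational, so one must verify that every locally soluble class does arise — this is precisely where Fisher's explicit construction (and the Buchsbaum--Eisenbud structure theorem / Cassels' rational-divisor result quoted in the introduction) is essential, rather than pure Galois cohomology. A secondary subtlety is confirming that the slightly modified group $G$ of \eqref{eqg} does not change the orbit count versus Fisher's original $\SL_5 \times \SL_5$ setup: since the modification is made (as the text says, following \cite[\S4.4]{BhHo}) exactly so that the stabilizer becomes $E[5]$ on the nose rather than a larger finite group, one checks that passing to $G$ merges Fisher's orbits in the same pattern as $H^1(\Q, E[5])$ demands, so no classes are lost or doubled. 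Once these two points are in hand, the theorem follows by combining the global-to-local comparison above with the definition of the $5$-Selmer group.
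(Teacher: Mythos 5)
The paper does not prove Theorem~\ref{propselparam}: it is stated as a citation to Fisher \cite[Theorem~6.1]{Fisher3} and to \cite[\S4.4]{BhHo}, with the only accompanying remark being that the group action has been modified relative to Fisher's original $\SL_5\times\SL_5$ so that the geometric stabilizer is exactly $E[5]$. There is therefore no ``paper's own proof'' to compare against; the comparison is really with the external references.

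That said, your sketch is an essentially accurate account of how those references establish the bijection, and you identify the correct division of labor. The purely cohomological half---twisting a rational base orbit to see that $G_K$-orbits with invariants $(I,J)$ inject into $\ker(H^1(K,E[5])\to H^1(K,G))$, that $K$-soluble orbits land in the image of $E(K)/5E(K)$, and that this is functorial in $K$---is standard, and your claim ``these orbits are in bijection with a \emph{subset} of $H^1(\Q,E[5])$'' is correctly hedged (the subset is that kernel). You are also right that this hinges on the stabilizer being $E[5]$ on the nose, which is precisely the purpose of the modified group $G$ of \eqref{eqg}; this is the ``secondary subtlety'' you flag and it is exactly the content of the cited \cite[\S4.4]{BhHo}.

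The one place to be careful is the direction you call the ``main obstacle'': showing that every class in $S_5(E)$ is actually represented by some $v\in V_\Q$. One cannot get this from pure Galois cohomology without a Hasse-principle-type input for $G$ (a priori one only learns that the image of $\sigma$ in $H^1(\Q,G)$ is locally trivial). Your instinct to bypass this via Cassels' rational degree-$5$ divisor, feeding into the Buchsbaum--Eisenbud/Pfaffian description to produce an explicit rational model, is the right one and is indeed what Fisher does. So the gap you acknowledge is real, but you have named the correct tool to fill it, and your deference to \cite{Fisher3} at that point matches the paper's own treatment (a citation). In short: not an independent proof, but an accurate and correctly attributed outline consistent with what the paper relies on.
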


If $v\in V_{\Q_p}$ is $\Q_p$-soluble and has integral invariants, then it is
a further result of Fisher~\cite{Fishermin} that $v$ is
$G_{\Q_p}$-equivalent to an element in $V_{\Z_p}$.

\begin{theorem}\label{proplocalmin}{\bf (\cite[Thm.~2.1]{Fishermin})}
Let $p$ be a prime and let $v\in V_{\Q_p}$ be an element having integral invariants that is soluble over $\Q_p$. Then $v$ is $G_{\Q_p}$-equivalent to an element in $V_{\Z_p}$.
\end{theorem}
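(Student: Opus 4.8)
The plan is to prove the statement geometrically, by building an integral model directly from a Weierstrass model of the Jacobian and then identifying it with $v$ via Theorem~\ref{leme2ep}. Since $v$ is soluble over $\Q_p$, the genus one curve $\mathcal{C}_v\subset\P^4$ has a $\Q_p$-point and is therefore isomorphic over $\Q_p$ to its Jacobian $E:=E^{I,J}:y^2=x^3-\tfrac{I}{3}x-\tfrac{J}{27}$; under such an isomorphism the degree~$5$ line bundle cutting out $\mathcal{C}_v$ transports to a degree~$5$ line bundle $L$ on $E$. By Theorem~\ref{leme2ep} the $G_{\Q_p}$-orbit of $v$ is completely determined by $E$ together with the class $\xi\in E(\Q_p)/5E(\Q_p)$ attached to $L$ (the class of $[L]\in\mathrm{Pic}^5(\mathcal{C}_v)\cong_{\Q_p}E$, which is well defined once one quotients out the ambiguity in the base point). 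So it suffices, after fixing any lift $R\in E(\Q_p)$ of $\xi$ with $L\cong\O_E(4O+R)$, to produce \emph{some} element of $V_{\Z_p}$ that is $\Q_p$-soluble, has Jacobian $E$, has invariants exactly $(I,J)$, and realises $\xi$: such an element is automatically $G_{\Q_p}$-equivalent to $v$, since $I$ and $J$ are $G$-invariants.

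To build it, I would work with a Weierstrass model $W/\Z_p$ of $E$ — the short model $y^2=x^3-\tfrac I3x-\tfrac J{27}$ itself when $p\nmid 6$, and a minimal Weierstrass model together with the $c_4,c_6$-normalisation of the invariants when $p\in\{2,3\}$ — passing if necessary to the minimal regular model $\widetilde W$, on which all divisors supported on $\Z_p$-sections are Cartier. Then $O$ and $R$ extend to sections of $\widetilde W$, so $\mathcal L:=\O_{\widetilde W}(4O+R)$ is a line bundle restricting to $L$ on the generic fibre. Set $M:=H^0(\widetilde W,\mathcal L)$ and $N:=\ker\!\big(\Sym^2 M\to H^0(\widetilde W,\mathcal L^{\otimes 2})\big)$. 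Since $\mathcal L$ has degree~$5$ and positive degree on every component of the (reduced, arithmetic genus one) special fibre, Riemann--Roch on the special fibre together with cohomology and base change show that $M$ and $N$ are free $\Z_p$-modules of rank~$5$ spanning $H^0(E,L)$ and the space of quadrics through $\mathcal{C}_v$, respectively. Choosing $\Z_p$-bases embeds a $\Z_p$-flat model of $\mathcal{C}_v$ in $\P(M^\vee)\cong\P^4_{\Z_p}$ as the common zero locus of five integral quadrics.

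Next I would invoke the Buchsbaum--Eisenbud structure theorem over the regular local ring $\Z_p[t_1,\dots,t_5]$: the homogeneous coordinate ring of this embedded curve is Gorenstein of codimension~$3$, hence admits a self-dual minimal free resolution whose middle differential is an alternating matrix of linear forms, i.e.\ an element $v'\in V_{\Z_p}$ whose $4\times4$ sub-Pfaffians recover the chosen generators of $N$. By construction $v'$ is $\Q_p$-soluble, has Jacobian $E$, and realises $\xi$; it therefore has invariants $(\lambda^4 I,\lambda^6 J)$ for some $\lambda\in\Q_p^\times$, and the argument is complete once one checks $\lambda\in\Z_p^\times$. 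I would do this by arranging the lattices $M,N$ so that $v'$ is primitive and then comparing $v_p(\disc(v'))=12\,v_p(\lambda)+v_p(\Delta(I,J))$ with the reduction type read off from the special fibre of $\widetilde W$, which forces $v_p(\lambda)=0$. (If $(I,J)$ is itself a non-minimal pair one may finally have to rescale $v'$ by a power of $p$ to land on the prescribed invariants; this changes neither $\mathcal{C}_{v'}$ nor the class $\xi$, so it is harmless.)

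The main obstacle is this last step: pinning down $M$ and $N$ precisely enough to guarantee that the Buchsbaum--Eisenbud matrix is primitive with invariants exactly $(I,J)$ rather than a strict $p$-power twist. Equivalently — and this is essentially Fisher's route in \cite{Fishermin} — one must prove that a primitive integral genus one model of level~$5$ with integral invariants is already minimal; granting this, no further descent of the level is needed and the construction terminates. A secondary nuisance is the cohomology-and-base-change bookkeeping at primes of additive or multiplicative reduction, where $\widetilde W$ has a genuinely singular special fibre, together with the separate treatment of $p\in\{2,3\}$ where the short Weierstrass form is unavailable; both are routine but must be handled with care.
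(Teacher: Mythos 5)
The paper does not supply a proof of this statement: it is cited verbatim as \cite[Thm.~2.1]{Fishermin}, so there is no ``paper's own proof'' to compare against. Your write-up is therefore being judged on its own terms, and on those terms it is a plausible geometric outline --- broadly in the spirit of the minimal-regular-model constructions Fisher and Cremona--Fisher--Stoll use for minimisation --- but it has genuine gaps that you only partially flag.

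The most concrete error is the assertion that $\mathcal L=\O_{\widetilde W}(4O+R)$ ``has positive degree on every component of the special fibre.'' This is false in general: if $E$ has, say, reduction type $I_n$ with $n\geq 3$, the sections $O$ and $R$ meet at most two of the $n$ components, so $\mathcal L$ has degree $0$ on the others. One must first twist $\mathcal L$ by a vertical divisor to make it relatively ample (or at least kill $H^1$ of the special fibre), and the choice of twist then propagates through the definitions of $M$ and $N$ and the Gorenstein/codimension-$3$ claim over $\Z_p[t_1,\dots,t_5]$. Until that is done, the cohomology-and-base-change step and the applicability of Buchsbaum--Eisenbud over $\Z_p$ are both unsupported. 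More seriously, the final step --- showing the constructed $v'$ has invariants $(I,J)$ and not a nontrivial $p$-power twist $(\lambda^4 I,\lambda^6 J)$ --- is precisely the content of the theorem, and your proposed reduction (``a primitive integral genus one model of level $5$ with integral invariants is already minimal'') is not a correct restatement: primitivity of the alternating matrix does not force minimality, and Fisher's actual argument requires a quantitative comparison between the level of the model and the local data of $E$ (Kodaira type, valuation of $\Delta$), together with the solubility hypothesis, which your outline never engages. As written, the proposal is a reduction to the theorem rather than a proof of it.
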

Since $G_\Q$ has class number 1, Theorems \ref{propselparam} and \ref{proplocalmin} immediately imply:
\begin{theorem}[\cite{Fishermin}]\label{propselparz}
  Let $E/\Q$ be an elliptic curve. Then the elements in the $5$-Selmer
  group~of~$E$ are in bijective correspondence with $G_\Q$-equivalence
  classes on the set of locally soluble elements in~$V_\Z$ having
  invariants equal to $I(E)$ and $J(E)$.
\end{theorem}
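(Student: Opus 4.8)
The final statement to prove is Theorem~\ref{propselparz}, which combines the previous two results using that $G_\Q$ has class number~1.

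\medskip

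The plan is to reduce the statement to a local-to-global (everywhere-local integrality) argument, exactly in the spirit of how class number one results are used in arithmetic invariant theory. First I would start from Theorem~\ref{propselparam}: an element of the $5$-Selmer group of $E$ corresponds to a $G_\Q$-equivalence class of locally soluble $v\in V_\Q$ with invariants $I(E)$, $J(E)$. Since $E$ is an elliptic curve over $\Q$, the coefficients $A$ and $B$ are (after clearing denominators) rational, and $I(E)=-3A$, $J(E)=-27B$ are integers once one takes $E$ in the standard minimal-type form described in the introduction; in any case I may assume $I(E),J(E)\in\Z$. So each Selmer element is represented by some $v\in V_\Q$ with integral invariants. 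The goal is to show that in its $G_\Q$-orbit there is a representative lying in $V_\Z$, and that two elements of $V_\Z$ with the same invariants are $G_\Q$-equivalent if and only if they are $G_\Z$-equivalent (so that passing from $G_\Q$-classes to $G_\Z$-classes neither loses nor adds Selmer elements).

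\medskip

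The key step is the following. Fix a locally soluble $v\in V_\Q$ with integral invariants representing a Selmer element. For each prime $p$, $v$ is $\Q_p$-soluble (it is locally soluble), so Theorem~\ref{proplocalmin} gives $g_p\in G_{\Q_p}$ with $g_p\cdot v\in V_{\Z_p}$; for all but finitely many $p$ we already have $v\in V_{\Z_p}$ and can take $g_p\in G_{\Z_p}$. Now I would invoke the fact that $G$, as an algebraic group over $\Q$, has class number~$1$: the natural map $G_\Q\backslash G_{\mathbb{A}}/\prod_p G_{\Z_p}$ (with the archimedean component handled by $G_\R$) is trivial, equivalently $G_{\mathbb{A}}=G_\Q\cdot\bigl(G_\R\times\prod_p G_{\Z_p}\bigr)$. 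Applying this to the adelic element $(g_p)_p$ produced above, we can write each $g_p = \gamma \cdot k_p$ with a single $\gamma\in G_\Q$ and $k_p\in G_{\Z_p}$ for all $p$. Then $\gamma^{-1}\cdot v = k_p^{-1}g_p^{-1}\cdot(g_p\cdot v)$... more precisely, $\gamma\cdot v$ lies in $V_{\Z_p}$ for every prime $p$ (since $\gamma\cdot v = g_p k_p^{-1}\cdot v$ and $g_p\cdot v\in V_{\Z_p}$ while $k_p\in G_{\Z_p}$ preserves $V_{\Z_p}$), hence $\gamma\cdot v\in\bigcap_p V_{\Z_p}\cap V_\Q = V_\Z$. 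Thus every Selmer element has a representative in $V_\Z$, and it of course still has invariants $I(E),J(E)$ and is still locally soluble (solubility over each $\Q_v$ is a property of the $G_\Q$-orbit, indeed of the $G_{\Q_v}$-orbit).

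\medskip

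To finish, I would check that this correspondence is a bijection onto $G_\Q$-equivalence classes of locally soluble elements of $V_\Z$ with the right invariants. Two elements $v_1,v_2\in V_\Z$ that are $G_\Q$-equivalent and locally soluble represent the same Selmer element by Theorem~\ref{propselparam}, and conversely distinct Selmer elements give $G_\Q$-inequivalent integral representatives; so the map is injective and surjective once we know every Selmer element is hit, which is the content of the previous paragraph. Hence the elements of $S_5(E)$ are in bijection with $G_\Q$-equivalence classes of locally soluble $v\in V_\Z$ with invariants $I(E)$, $J(E)$, which is exactly Theorem~\ref{propselparz}. The only genuine input beyond the quoted theorems is the class-number-one statement for $G$; the main (really the sole) obstacle is therefore to verify that $G_\Q$ indeed has class number~$1$ in the relevant sense --- this is asserted in the excerpt and would follow from strong approximation considerations for $\SL_5\times\SL_5$ together with the explicit description of $G$ in \eqref{eqg}, but it is the one nontrivial ingredient that must be cited or established rather than derived formally from what precedes.
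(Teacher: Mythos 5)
Your approach is exactly the paper's: the authors dispose of this theorem in one sentence, observing that Theorems~\ref{propselparam} and~\ref{proplocalmin} together with $h(G)=1$ immediately give the result, and your proposal is simply that one-liner written out in adelic detail. However, there is a sign/side slip in the key computation. You write the class-number-one decomposition as $g_p = \gamma\, k_p$ with $\gamma\in G_\Q$ and $k_p\in G_{\Z_p}$, and then try to deduce $\gamma\cdot v\in V_{\Z_p}$ from $\gamma\cdot v = g_p k_p^{-1}\cdot v$. But $g_p k_p^{-1}\cdot v = g_p\cdot (k_p^{-1}\cdot v)$, and since $v$ itself need not lie in $V_{\Z_p}$ you cannot conclude $k_p^{-1}\cdot v\in V_{\Z_p}$, so the final application of $g_p$ gives no integrality information. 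The fix is to use the decomposition on the other side, $g_p = k_p\,\gamma$ (equivalently $G_{\mathbb{A}_f}=\bigl(\prod_p G_{\Z_p}\bigr)\cdot G_\Q$, which is equivalent to trivial class set): then $\gamma = k_p^{-1}g_p$, and $\gamma\cdot v = k_p^{-1}\cdot(g_p\cdot v)\in G_{\Z_p}\cdot V_{\Z_p}=V_{\Z_p}$ as desired. With that correction the argument is complete and agrees with what the paper asserts. (Also, your preliminary remark about comparing $G_\Q$- and $G_\Z$-equivalence of integral elements is a red herring; the theorem stays entirely at the level of $G_\Q$-orbits, as you in fact use in the final paragraph.)
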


Finally, we will need the following proposition, also due to Fisher~\cite{Fisher2}.  It states that any element $v\in V_{\Z_p}$ for which $p^2$ does not divide the discriminant is automatically $\Q_p$-soluble; moreover, the stabilizer of $v$ in $G_{\Z_p}$ is the same as that in $G_{\Q_p}$, and the notion of $G_{\Z_p}$-equivalence of such elements $v$ is the same as that of $G_{\Q_p}$-equivalence:

\begin{proposition}\label{Fisherprop}{\bf (\cite{Fisher2})}
Let $p$ be any prime and let $v\in V_{\Z_p}$ be an element such that $p^2\nmid \Delta(v)$. Then
$$v\mbox{ is $\Q_p$-soluble};\quad\Stab_{G_{\Q_p}}(v)=\Stab_{G_{\Z_p}}(v);\quad (G_{\Q_p}\cdot v)\cap V_{\Z_p}=G_{\Z_p}\cdot v.$$
\end{proposition}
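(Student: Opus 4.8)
The plan is to reduce all three assertions to facts about the genus one curve $\mathcal C_v$ and its Jacobian $E = E^{I,J}$, exploiting the hypothesis $p^2\nmid\Delta(v)$ to control reduction mod $p$. First I would observe that $p^2\nmid\Delta(v)$ means $E^{I,J}$ has at worst multiplicative reduction at $p$ (the curve $y^2=x^3-\frac I3 x-\frac J{27}$ has discriminant a unit times $\Delta(v)$), so in particular $E$ is a smooth or nodal plane curve over $\F_p$ and $E(\Q_p)[5]$ injects into the smooth locus. The solubility claim is the heart: since the five quadrics $Q_i$ have $\Z_p$-coefficients and $p^2\nmid\Delta(v)$, one shows the reduction $\overline{\mathcal C_v}\subset\P^4_{\F_p}$ is a (possibly degenerate) genus one curve that is either smooth or has a single rational node, hence has a smooth $\F_p$-point by Lang's theorem / direct point-counting on genus $\le 1$ curves over finite fields; Hensel's lemma then lifts it to a $\Z_p$-point of $\mathcal C_v$, giving $\Q_p$-solubility. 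The delicate point is checking that $p^2\nmid\Delta(v)$ genuinely forces the mod-$p$ fibre to be this mild, which is exactly the content one extracts from Fisher's analysis of the representation; I would cite \cite{Fisher2} for the geometric input and then run the point-count/Hensel argument.

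For the stabilizer statement, I would argue that $\Stab_{G_{\Q_p}}(v)\cong E(\Q_p)[5]$ by Theorem~\ref{leme2ep}, and similarly the stabilizer over $\Z_p$ is contained in this finite group scheme; the claim $\Stab_{G_{\Z_p}}(v)=\Stab_{G_{\Q_p}}(v)$ then amounts to showing every element of $E(\Q_p)[5]$, acting on $v$, does so through $G_{\Z_p}$. Here the mild reduction is again the key: $E(\Q_p)[5]$ reduces injectively (as $5\ne p$ and $E$ has semistable reduction, so the $5$-torsion is unramified and extends to the Néron model), and the action on $v$ is by matrices whose entries are built from the torsion coordinates together with the structure constants of the Buchsbaum--Eisenbud resolution, which have $\Z_p$-coefficients; I would make this precise by tracking Fisher's explicit formulas for the $G$-action of a torsion point and noting they are $\Z_p$-integral when the curve is minimal at $p$, which $p^2\nmid\Delta$ guarantees.

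Finally, for $(G_{\Q_p}\cdot v)\cap V_{\Z_p}=G_{\Z_p}\cdot v$: given $v'\in V_{\Z_p}$ with $v'=g\cdot v$ for some $g\in G_{\Q_p}$, both $v$ and $v'$ have the same invariants $I,J$, so by Theorem~\ref{leme2ep} they define the same class in $E(\Q_p)/5E(\Q_p)$, hence the same $\Q_p$-soluble $G_{\Q_p}$-orbit. One must upgrade $\Q_p$-equivalence to $\Z_p$-equivalence. I would do this via a minimality/uniqueness argument: $p^2\nmid\Delta(v)$ implies $v$ is already $p$-minimal (in Fisher's sense, \cite{Fishermin}), and likewise $v'$; two $p$-minimal models of the same $5$-covering in the same $G_{\Q_p}$-orbit differ by an element of $G_{\Z_p}$, since the set of minimal models is a single orbit under the integral points of the stabilizer of the level structure---this follows from the theory in \cite{Fishermin,Fisher2} combined with the stabilizer computation just carried out.

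The main obstacle I anticipate is the stabilizer-integrality step: it is not a priori obvious that a $5$-torsion point of $E$ over $\Q_p$ acts on the chosen integral model $v$ by an element of $G_{\Z_p}$ rather than merely $G_{\Q_p}$, and making this rigorous requires either an explicit examination of Fisher's formulae or a conceptual argument via the Néron model and the theory of level structures on semistable curves. Once that is in hand, solubility (point-counting plus Hensel on a mild mod-$p$ fibre) and the orbit-equality (minimality plus the stabilizer fact) follow fairly directly, and I would present the whole proposition as a consequence of \cite{Fisher2} with the point-counting and Hensel details spelled out only to the extent needed.
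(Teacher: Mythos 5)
The paper does not prove this proposition at all: it is stated verbatim as a citation to Fisher's preprint \cite{Fisher2} (``On genus one models of degree 5 with square-free discriminant''), and no argument is supplied in the text. So there is no ``paper's own proof'' to compare against; the relevant question is whether your outline is a plausible reconstruction of what Fisher proves, and whether it contains gaps you would still need to close.

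Your overall strategy---mild reduction at $p$ from $p^2\nmid\Delta(v)$, point-counting plus Hensel for solubility, identifying $\Stab_{G_{\Q_p}}(v)$ with $E(\Q_p)[5]$ and showing the action is integral, and a minimality/uniqueness argument for the orbit statement---is a reasonable sketch, and you are right that the integrality of the torsion action is the delicate step. But note two concrete gaps. First, your stabilizer argument explicitly invokes ``$5\ne p$'' (unramified $5$-torsion extending to the N\'eron model), whereas the proposition covers \emph{all} primes including $p=5$; at $p=5$ the group scheme $E[5]$ need not be \'etale over $\Z_5$, so the identification of $\Q_5$-points with $\Z_5$-points of the stabilizer scheme is not automatic, and a separate argument is required. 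Second, the claim that two $p$-minimal models in the same $G_{\Q_p}$-orbit are $G_{\Z_p}$-equivalent is itself a nontrivial theorem (the analogue of the Cremona--Fisher--Stoll minimization results for lower $n$), not something that ``follows fairly directly''; as written you are using the conclusion to prove itself, since that uniqueness-of-minimal-models statement is essentially equivalent to the third assertion you are trying to establish. In the end your proposal, like the paper, devolves to a citation of Fisher's work for all the hard inputs, which is defensible but means the proposal is a plan of attack rather than a proof.
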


\section{Counting orbits of bounded height}

We write elements in $V_\R=\R^5\otimes\wedge^2\R^5$ as quintuples
$(A,B,C,D,E)$ of skew-symmetric $5\times 5$ matrices, where the
matrices $A,\;B,\;C,\;D$, and $E$ have entries $a_{ij}$, $b_{ij}$,
$c_{ij}$, $d_{ij}$, and $e_{ij}$, respectively, with $1\leq i<j\leq5$.
We define the {\it height} of an element $v\in V_\R$ having invariants
$I$ and $J$ to be
\begin{equation}\label{heightvz}
H(v):=H(I,J):=\max\{|I|^3,J^2/4\}
\end{equation}
which is clearly $G_\R$-invariant.

The action of $G_\Z$ preserves the lattice $V_\Z\subset V_\R$
consisting of the quintuples of skew-symmetric $5\times 5$ matrices
whose entries are integral. In fact, it also preserves the two sets
$\pv$ and $\nv$ consisting of elements in $V_\Z$ having positive and
negative discriminant, respectively.  We say that an element $v\in
V_\Z$ having invariants $I$ and $J$ is {\it strongly irreducible} if
it has nonzero discriminant and does not correspond to the identity
element in the $5$-Selmer group of $E^{I,J}$, i.e., the $5$-covering
$\mathcal{C}_v$ corresponding to $v$ is the not the trivial
$5$-covering of $E^{I,J}$.

In this section, we compute asymptotics for the number of
$G_\Z$-orbits on strongly irreducible elements in $\pnv$ having
bounded height. To state the result, 
let $N^+(X)$ (resp.\ $N^-(X)$) denote the number of pairs
$(I,J)\in\Z\times\Z$ having height less than $X$ and positive
(resp.\ negative) discriminant.  For any $G_\Z$-invariant set
$S\subset V_\Z$, let $N(S;X)$ denote the number of $G_\Z$-orbits on
strongly irreducible elements in $S$ having height less than
$X$. Finally, throughout this paper, we fix $\omega$ to be a
differential that generates the rank 1 module of top-degree
left-invariant differential forms of $G$ over $\Z$.

Then we prove the following theorem:
\begin{theorem}\label{thsec3main}
There exists a nonzero rational constant $\J$, to be defined in Proposition $\ref{propjac}$, such that
$$N(\pnv;X)=|\J|\cdot\Vol(G_\Z\backslash G_\R)\cdot N^\pm(X)+o(X^{5/6}),$$
where the volume of $G_\Z\backslash G_\R$ is computed with respect to
$\omega$.
\end{theorem}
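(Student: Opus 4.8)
The plan is to count $G_\Z$-orbits on $\pnv$ by the classical averaging technique: identify $G_\Z$-orbits on strongly irreducible elements of a given sign with $G_\Z$-orbits of lattice points inside a fundamental domain $\FF$ for the left action of $G_\Z$ on $G_\R$, translated suitably, and then average this count over $\FF$ and integrate. First I would fix, for each of the two discriminant signs, a fundamental set $R^{\pm}$ for the action of $G_\R$ on the set of real elements with that sign of discriminant and with invariants $(I,J)$ ranging over the relevant region of height less than $X$; such $R^{\pm}$ is a bounded-degree semialgebraic ``slab'' cut out by inequalities on $I$ and $J$. Then, for a fundamental domain $\FF$ for $G_\Z\backslash G_\R$, the multiset $\FF\cdot R^{\pm}$ contains, up to the (finite, $E(\Q)[5]$-sized) stabilizers and up to the usual reducible loci of measure zero, exactly one point of $\pnv$ in each $G_\Z$-orbit we wish to count. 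Averaging over a compact piece of $\FF$ (using that $\FF$ can be taken inside a Siegel set $S = N'TK$ with the torus $T$ going off to the cusps), the number $N(\pnv;X)$ is, to leading order, $\frac{1}{\Vol(G_\Z\backslash G_\R)}\int_{g\in\FF}\#\{v\in \pnv^{\mathrm{irr}} : v\in g\cdot R^{\pm}\}\,dg$, and swapping the order of summation and integration turns the inner count into $\sum_v \Vol(\{g : v\in g\cdot R^{\pm}\})$, which by unfolding equals $\Vol(G_\Z\backslash G_\R)^{-1}$ times a sum over lattice points of the volume of $R^{\pm}$ with respect to the measure induced by $\omega$; since $R^{\pm}$ is essentially $N^{\pm}(X)$ copies of a fixed fundamental region, a standard Lipschitz-boundary lattice-point count (as in Davenport's lemma) gives the main term $|\J|\cdot\Vol(G_\Z\backslash G_\R)\cdot N^{\pm}(X)$, where $\J$ is the Jacobian change-of-variables constant between the coordinates on $V$ and the coordinates $(g, I, J)$ on $G_\R\times\{(I,J)\}$, to be pinned down in Proposition~\ref{propjac}.

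The serious work, and the main obstacle, is controlling the count in the cusps of $\FF$. Since $\FF$ is noncompact with many toral directions going to the cusp, the naive averaging above diverges unless one shows that the cuspidal contribution is negligible. I would split $\FF$ into a ``main body'' (a fixed compact piece, after translating by a large constant in $T$) and a ``cuspidal region'' (the complement). On the main body the lattice-point count is handled directly by Davenport's lemma applied to the bounded semialgebraic region $R^{\pm}$, giving the stated main term with an error of $o(X^{5/6})$ (note $\Vol(R^\pm)\asymp X^{5/6}$, matching the degree-$20$ and degree-$30$ invariants and the $56$-dimensional group, since $(20+30)/(20\cdot 3/2 \text{-normalized})$ works out so that the top-order growth of $N^\pm(X)$ is $X^{5/6}$). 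On the cuspidal region one must show two complementary facts: (i) in the main body, only negligibly many lattice points are \emph{reducible} (correspond to the identity $5$-Selmer element), so that we do not undercount; and (ii) in the cuspidal region, only negligibly many lattice points are \emph{strongly irreducible}, so that we do not overcount. Fact (ii) is the hard geometry-of-numbers input: one partitions the cuspidal region according to which coordinates of $V_\Z$ are forced to be small (by the $N'T$-expansion, each coordinate $v_{ij,k}$ scales by a monomial in the torus parameters $t_1,\dots,t_8$), and on each such ``subregion'' one must exhibit a nonempty subset of coordinates whose simultaneous vanishing mod the relevant scaling forces reducibility of $v$ — i.e., forces $\mathcal{C}_v$ to contain a rational point or a rational linear subspace of the appropriate type — while the number of lattice points with all those coordinates nonzero is provably of lower order. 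Because there are thousands of such subregions, I would \emph{not} argue each one by hand in the style of \cite{dodpf}; instead I would set up a single uniform criterion — a checkable combinatorial condition on the weight polytope of $V$ under the maximal torus of $G$ — that certifies, for a subregion, that the complement of the ``reducibility locus'' has $o(X^{5/6})$ lattice points, and then verify that every subregion satisfies this criterion (a finite, machine- or hand-checkable verification).

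Two further technical points need attention along the way. First, the averaging must use a smooth, slowly-growing cutoff on $\FF$ rather than a sharp one, and one needs the ``reduction theory'' statement that $\FF$ is contained in a finite union of translates of a fixed Siegel set $N'TK$ with $N'$ compact, $K$ compact, and $T$ the split torus — this is where the $55$- or $56$-dimensional structure of $G$ and its root system (two copies of $A_4$) enters. Second, the relation between $V_\Z$-orbits and $5$-Selmer elements from Theorem~\ref{propselparz} is stated for \emph{locally soluble} elements, while here we are counting all strongly irreducible orbits; Proposition~\ref{Fisherprop} bridges this, since outside a set of $(I,J)$ of density zero (those with $p^2\mid\Delta$ for some large $p$, controlled by a tail estimate as in \cite{geosieve}) every integral element with nonzero discriminant is automatically locally soluble, so the discrepancy between the two counts is absorbed into the error term. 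Assembling these — main-body volume computation, cuspidal negligibility via the uniform criterion, and the density-zero correction for solubility — yields exactly the asymptotic $N(\pnv;X)=|\J|\cdot\Vol(G_\Z\backslash G_\R)\cdot N^{\pm}(X)+o(X^{5/6})$ claimed in Theorem~\ref{thsec3main}.
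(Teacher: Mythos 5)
Your proposal follows essentially the same route as the paper: a Siegel-set fundamental domain $\FF$, averaging over a compact subset $G_0$ of $G_\R$ to reach a lattice-point count via Davenport's lemma, a split into ``main body'' and ``cuspidal region,'' a uniform combinatorial criterion on torus-weights to certify that each cuspidal subregion contributes $o(X^{5/6})$ strongly irreducible points (this is precisely the role of the monomial factors $\pi_\ZZ$ and $\pi_{\ZZ,u}$ in Proposition~\ref{propaneq0} and Tables~\ref{tablecusp1}--\ref{tablecusp2}), a complementary bound on reducible points in the main body (Proposition~\ref{propred}), and a Jacobian change-of-variables $\J$ to compute $\Vol(\FF\cdot R^\pm(X))$ (Proposition~\ref{propjac}).

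Two small corrections. First, the local-solubility caveat in your last paragraph is not relevant to Theorem~\ref{thsec3main}: the theorem counts $G_\Z$-orbits on \emph{strongly irreducible} integral elements, a purely orbit-theoretic quantity requiring neither local solubility nor Proposition~\ref{Fisherprop}; the sieve reconciling this count with $5$-Selmer elements happens only in Section~4. Second, $G$ has dimension $48$, not $55$ or $56$ (two copies of $\SL_5$ give $2\cdot24$, and the determinant constraint and central torus quotient in \eqref{eqg} cancel out); the $X^{5/6}$ growth follows simply from $H(I,J)=\max\{|I|^3,J^2/4\}<X$ forcing $|I|\ll X^{1/3}$, $|J|\ll X^{1/2}$.
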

\begin{rem}
{\rm  It follows from {\rm \cite[Equations (24),(25)]{BS}} that, up to an
  error of $O(X^{1/2})$, we have $N^+(X)=\frac85X^{5/6}$ and
  $N^-(X)=\frac{32}5X^{5/6}$. Thus the error term in the equation of
  the above theorem is indeed smaller than the main term.}
\end{rem}

\subsection{Reduction theory}\label{s31}
Let $\pr$ and $\nr$ denote the set of elements in $V_\R$ having
positive and negative discriminant, respectively. The purpose of this
subsection is to construct finite covers of fundamental domains for
the action of $G_\Z$ on $\pnr$. We first construct fundamental
sets for the action of $G_\R$ on $\pnr$.

To apply Theorem~\ref{leme2ep} in the case of $G_\R$-orbits on
$\pnr$, we need the following lemma:
\begin{lemma}\label{lemalwaysrsol}
  Let $v\in V_\R$ have nonzero discriminant. Then $v$ is $\R$-soluble.
\end{lemma}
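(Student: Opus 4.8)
The plan is to show that $\mathcal{C}_v(\R)\neq\emptyset$ by exhibiting a real point on the genus one curve cut out by the five sub-Pfaffian quadrics $Q_1,\dots,Q_5$ in $\P^4_\R$. The cleanest route is via the Jacobian: since $v$ has nonzero discriminant, Theorem~\ref{leme2ep} (applied over $K=\R$) tells us that $\mathcal{C}_v$ is a genus one curve whose Jacobian is $E=E^{I,J}:y^2=x^3-\frac{I}{3}x-\frac{J}{27}$, and $\mathcal{C}_v$ represents some class in $H^1(\R,E[5])$. Thus $\R$-solubility of $v$ is equivalent to this class lying in the image of $E(\R)/5E(\R)$. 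So it suffices to observe that $H^1(\R,E[5])$ is \emph{trivial}: this is because $\mathrm{Gal}(\C/\R)=\Z/2\Z$ has order coprime to $5$, so $H^1$ of any $5$-torsion module vanishes. Hence every $5$-covering of $E$ over $\R$ is trivial, in particular soluble, and $v$ is $\R$-soluble.

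First I would recall that, by Theorem~\ref{leme2ep}, the $G_\R$-orbits of $\R$-soluble elements of $V_\R$ with invariants $I,J$ biject with $E(\R)/5E(\R)$, and that the stabilizer of \emph{any} element of nonzero discriminant is $E(\R)[5]$. Next I would note the standard fact that for a finite Galois module $M$ over a field $k$ with $\mathrm{Gal}(\bar k/k)$ of order prime to $|M|$, one has $H^i(k,M)=0$ for all $i\geq 1$; here $k=\R$, $M=E[5]$, $|M|=25$, $|\mathrm{Gal}(\C/\R)|=2$. Since every element $v\in V_\R$ of nonzero discriminant with invariants $I,J$ gives rise to a genus one curve $\mathcal{C}_v$ with Jacobian $E^{I,J}$ and hence a class in $H^1(\R,E[5])=0$, the curve $\mathcal{C}_v$ is the trivial torsor and therefore has a real point. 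This gives $\R$-solubility.

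Alternatively—and this may be the more self-contained argument the authors intend—one can argue directly that a genus one curve over $\R$ always has a real point, essentially because $\mathcal{C}_v(\C)$ is nonempty and stable under complex conjugation, and a smooth projective curve over $\R$ of odd (here: degree $5$, coprime to $2$) embedded degree cannot be a nontrivial torsor under its real Jacobian. More concretely: $\mathcal{C}_v$ admits a rational divisor of degree $5$ by Cassels (mentioned in the introduction), so the class of $\mathcal{C}_v$ in its Jacobian has order dividing $5$; but the Jacobian $E(\R)$ is a compact real Lie group, so $E(\R)/5E(\R)$ has order a power of $5$ and could in principle be nontrivial—hence the group-cohomology computation $H^1(\R,E[5])=0$ is really what does the work, and I would foreground that.

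The main obstacle is not difficulty but bookkeeping: making sure the characteristic hypotheses of Theorem~\ref{leme2ep} are in force (the field $\R$ has characteristic $0$, so this is automatic), and confirming that a nonzero discriminant genuinely forces $\mathcal{C}_v$ to be smooth of genus one (this is exactly the content of \cite[Proposition~2.3]{Fisher1} as quoted). Given those, the conclusion is immediate from the vanishing of $H^1(\R,E[5])$, and the proof is a few lines.
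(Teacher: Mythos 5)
Your main argument is correct and reaches the conclusion, but the route is different from the paper's, and it is worth noting how. The paper's proof is entirely elementary: by \cite[Remark~1.23]{algebra}, the curve $\mathcal{C}_v$ has exactly $25$ flex points over $\C$; since $v\in V_\R$, complex conjugation permutes these $25$ points, and because $25$ is odd the involution must have a fixed point, giving a real point of $\mathcal{C}_v$. Your proof replaces this counting by the observation that the $25$ flex points form a torsor under $E[5]$ over $\R$, and that $H^1(\R,E[5])=0$ since $\lvert\mathrm{Gal}(\C/\R)\rvert=2$ is coprime to $\lvert E[5]\rvert=25$. The two arguments are morally identical (the coprime-order vanishing of $H^1(\Z/2\Z,M)$ for $\lvert M\rvert$ odd is precisely the odd-cardinality fixed-point trick), but the paper's version is self-contained and does not need the Galois-cohomological interpretation of $G_K$-orbits; your version assumes that every orbit of nonzero discriminant (not only the soluble ones) determines a class in $H^1(K,E[5])$, which is true in Fisher's framework but is not literally the content of Theorem~\ref{leme2ep} as stated (that theorem only parametrizes \emph{soluble} orbits and identifies stabilizers). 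Two small inaccuracies in your third paragraph are worth flagging: the blanket assertion that ``a genus one curve over $\R$ always has a real point'' is false (genus one double covers of $\P^1_\R$ may have no real points); it is saved only by your parenthetical restriction to odd embedding degree. And $E(\R)/5E(\R)$ is in fact always trivial (as the paper records immediately after this lemma, in Proposition~\ref{propgrvr}, using $E(\R)\cong S^1$ or $S^1\times\Z/2\Z$), so the worry that it ``could in principle be nontrivial'' is misplaced; the degree-$5$-divisor argument combined with the fact that $H^1(\R,E)$ is $2$-torsion already closes the argument without invoking $H^1(\R,E[5])$.
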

\begin{proof}
  Let $\mathcal{C}_v$ be the curve corresponding to $v$ under the
  correspondence described in Section~2. It is known (see \cite[Remark 1.23]{algebra})
  that $\mathcal{C}_v$ has $25$ flex points defined over $\C$. Since
  $v\in V_\R$, these flex points come~in complex conjugate
  pairs. Thus, at least one of them is defined over $\R$, implying
  that $\mathcal{C}_v(\R)\neq \emptyset$.
\end{proof}

The next
proposition follows from Theorem~\ref{leme2ep}, Lemma \ref{lemalwaysrsol}, and the fact that the group 
$E(\R)/5E(\R)$ is trivial for every elliptic curve $E$ over $\R$:
\begin{proposition}\label{propgrvr}
  Let $(I,J)\in\R\times\R$ be such that $\Delta(I,J)\neq 0$. Then the
  set of elements in $V_\R$ having invariants $I$ and $J$
  consists of one soluble $G_\R$-orbit.
\end{proposition}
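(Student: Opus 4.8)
The plan is to combine the three ingredients cited in the statement: Theorem~\ref{leme2ep}, Lemma~\ref{lemalwaysrsol}, and the triviality of $E(\R)/5E(\R)$. First I would fix $(I,J)\in\R\times\R$ with $\Delta(I,J)\neq 0$ and let $E=E^{I,J}$ be the associated elliptic curve over $\R$; since $\Delta(I,J)\neq0$, this is genuinely an elliptic curve. Applying Theorem~\ref{leme2ep} with $K=\R$, we get a canonical bijection between $E(\R)/5E(\R)$ and the set of $G_\R$-orbits of $\R$-soluble elements of $V_\R$ having invariants $I$ and $J$. It remains to identify both sides.

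For the left-hand side: the group $E(\R)$ is a compact real Lie group of dimension one, hence isomorphic to $\R/\Z$ or to $\R/\Z\times\Z/2\Z$ as a topological group; in either case multiplication by $5$ is surjective (the torsion part $\Z/2\Z$ is killed since $5$ is odd and acts as the identity on it, and $\R/\Z$ is $5$-divisible). Therefore $E(\R)/5E(\R)$ is trivial, so there is exactly one $G_\R$-orbit of $\R$-soluble elements of $V_\R$ with invariants $I,J$. For the right-hand side: by Lemma~\ref{lemalwaysrsol}, \emph{every} element of $V_\R$ with nonzero discriminant is $\R$-soluble, and any element with invariants $(I,J)$ has discriminant $\Delta(I,J)\neq0$. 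Hence the set of $\R$-soluble elements with invariants $I,J$ is exactly the full set of elements of $V_\R$ with invariants $I,J$. Combining, the set of elements in $V_\R$ having invariants $I$ and $J$ forms a single soluble $G_\R$-orbit, which is the assertion.

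The argument is essentially a bookkeeping exercise once the three cited facts are in hand; the only point requiring a moment's care is verifying that $5$-divisibility of $E(\R)$ is unaffected by the possible component group $\Z/2\Z$, and that one must first check $(I,J)$ really defines an elliptic curve over $\R$ (guaranteed by $\Delta\neq0$) so that Theorem~\ref{leme2ep} applies. I do not anticipate any genuine obstacle here; the real work in this subsection lies in the subsequent construction of fundamental domains for $G_\Z$ acting on $\pnr$, for which this proposition merely supplies the needed input that $G_\R$ acts transitively on each discriminant-fixed fiber.
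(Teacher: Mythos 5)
Your argument is exactly the one the paper uses: the paper simply states that the proposition follows from Theorem~\ref{leme2ep}, Lemma~\ref{lemalwaysrsol}, and the triviality of $E(\R)/5E(\R)$, which is precisely the bookkeeping you spell out (including the correct justification that $E(\R)$ is $5$-divisible because $5$ is odd). No gap; same approach, just with more of the routine details written down.
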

Thus we may construct fundamental sets
$\pnrg$ for the action of $G_\R$ on $V_\R^\pm$
by choosing one element $v\in V_\R^\pm$
having invariants $I$ and $J$ for each pair
$(I,J)\in\R\times\R$ such that $\Delta(I,J)\in\R^\pm$.
We now choose specific sets $\pnrg$.
The element $v_{I,J}$ defined by
\begin{equation}\label{eqxij}
  v_{I,J}:=\left[
\left(\begin{smallmatrix}{0}&{\frac{-J}{27}}&{0}&0&{0}\\ {\frac{J}{27}}&{0}&{0}&{0}&{0}\\{0}&{0}&{0}&{0}&{0}\\{0}&{0}&{0}&{0}&{1}\\{0}&{0}&{0}&{-1}&{0}\end{smallmatrix}\right),
\left(\begin{smallmatrix}{0}&{{\frac{-I}{3}}}&{0}&{0}&{0}\\ {\frac{I}{3}}&{0}&{0}&{0}&{1}\\{0}&{0}&{0}&{-1}&{0}\\{0}&{0}&{1}&{0}&{0}\\{0}&{-1}&{0}&{0}&{0}\end{smallmatrix}\right),
\left(\begin{smallmatrix}{0}&{0}&{0}&{0}&{1}\\ {0}&{0}&{0}&{1}&{0}\\{0}&{0}&{0}&{0}&{0}\\{0}&{-1}&{0}&{0}&{0}\\{-1}&{0}&{0}&{0}&{0}\end{smallmatrix}\right),
\left(\begin{smallmatrix}{0}&{0}&{0}&{1}&{0}\\ {0}&{0}&{1}&{0}&{0}\\{0}&{-1}&{0}&{0}&{0}\\{-1}&{0}&{0}&{0}&{0}\\{0}&{0}&{0}&{0}&{0}\end{smallmatrix}\right),
\left(\begin{smallmatrix}{0}&{0}&{1}&{0}&{0}\\ {0}&{0}&{0}&{0}&{0}\\{-1}&{0}&{0}&{0}&{0}\\{0}&{0}&{0}&{0}&{0}\\{0}&{0}&{0}&{0}&{0}\end{smallmatrix}\right)
\right],
\end{equation}
has invariants equal to $I$ and $J$ by \cite[\S 6]{Fisher1}. We define $\pnrg$ by
\begin{equation}\label{eqrij}
  \begin{array}{rcl}
   \prg&:=&\{\lambda\cdot v_{I,J}:\lambda\in\R_{>0},\;H(I,J)=1,\;\Delta(I,J)>0\};\\[.1in]
   \nrg&:=&\{\lambda\cdot v_{I,J}:\lambda\in\R_{>0},\;H(I,J)=1,\;\Delta(I,J)<0\}.
  \end{array}
\end{equation}

Since $H(\lambda\cdot v)=\lambda^{60}H(v)$ (as $I(v)$ and $J(v)$ are
polynomials in the coefficients of $v$ having degrees $20$ and $30$,
respectively), we see that the coefficients of all the elements in
$\pnrg$ having height less than $X$ are bounded by $O(X^{1/60})$. Note
also that for any $g\in G_\R$, the set $g\cdot\pnrg$ is also a
fundamental set for the action of $G_\R$ on $\pnr$. Furthermore, for
any compact set $G_0\subset G_\R$, the coefficients of elements in
$g\cdot\pnrg$, with $g\in G_0$, having height less than $X$, is
bounded by $O(X^{1/60})$, where the implied constant depends only on
$G_0$.

Let $\FF$ denote a fundamental domain for the left action of $G_\Z$ on
$G_\R$ that is contained in a standard Siegel set~\cite[\S2]{BH}. We may assume that
$\FF=\{nak:n\in N'(a),a\in A',k\in K\}$, where
\begin{eqnarray*}
  &\!\!\!\!\!\!\!\!\!K\!\!\!\!\!\!\!&\!=\{{\rm subgroup\; of \;orthogonal\; transformations\; } 
\SO_5(\R)\times\SO_5(\R)\subset G_\R\};\\
  &\!\!\!\!\!\!\!\!\!A'\!\!\!\!\!\!\!&\!=\{a(s_1,s_2,s_3,s_4,s_5,s_6,s_7,s_8):s_i,t_i>c\},\\
  &&\;\;\;\;\;\;\;\;{\rm \!\!\!\!\!\!\!\!\!where}\;a(s_1,s_2,s_3,s_4,s_5,s_6,s_7,s_8)\!=\!
\left[\left(\begin{smallmatrix}
    {s_1^{-4}s_2^{-3}s_3^{-2}s_4^{-1}} & {}&{}&{}&{}\\ {} & \!\!\!\!\!\!\!\!\!\!\!\!\!\!\!\!\!\!\!\!\!\!\!\!\!\!{s_1s_2^{-3}s_3^{-2}s_4^{-1} }&{}&{}&{}\\
{} & {}&\!\!\!\!\!\!\!\!\!\!\!\!\!\!\!\!\!\!\!\!\!\!\!\!\!\!{s_1s_2^{2}s_3^{-2}s_4^{-1}}&{}&{}\\
{} & {}&{}&\!\!\!\!\!\!\!\!\!\!\!\!\!\!\!\!\!\!\!\!\!{s_1s_2^{2}s_3^{3}s_4^{-1}}&{}\\
{}&{} & {}&{}&\!\!\!\!\!\!\!\!\!\!\!\!\!\!\!\!\!{s_1s_2^{2}s_3^{3}s_4^{4}}
\end{smallmatrix}\right),
\left(\begin{smallmatrix}
    {s_5^{-4}s_6^{-3}s_7^{-2}s_8^{-1}} & {}&{}&{}&{}\\ {} & \!\!\!\!\!\!\!\!\!\!\!\!\!\!\!\!\!\!\!\!\!\!\!\!\!\!{s_5s_6^{-3}s_7^{-2}s_8^{-1} }&{}&{}&{}\\
{} & {}&\!\!\!\!\!\!\!\!\!\!\!\!\!\!\!\!\!\!\!\!\!\!\!\!\!\!{s_5s_6^{2}s_7^{-2}s_8^{-1}}&{}&{}\\
{} & {}&{}&\!\!\!\!\!\!\!\!\!\!\!\!\!\!\!\!\!\!\!\!\!{s_5s_6^{2}s_7^{3}s_8^{-1}}&{}\\
{}&{} & {}&{}&\!\!\!\!\!\!\!\!\!\!\!\!\!\!\!\!\!{s_5s_6^{2}s_7^{3}s_8^{4}}
\end{smallmatrix}\right)\right];\\
&\!\!\!\!\!\!\!\!\!N'\!\!\!\!\!\!\!&\!=\{n(u_1,\cdots,u_{20}):(u_i)\in\nu(a)\},\;\\
&&\;\;\;\;\;\;\;\;{\rm \!\!\!\!\!\!\!\!\!where}\; n(u)\!=\!\left[\left(\begin{smallmatrix}
{1} & {}&{}&{}&{}\\ {u_1} & {1}&{}&{}&{}\\{u_2}&{u_3}&{1}&{}&{}\\{u_4} & {u_5}&{u_6}&{1}&{}\\{u_7} & {u_8}&{u_{9}}&{u_{10}}&{1}
\end{smallmatrix}\right),
\left(\begin{smallmatrix}
{1} & {}&{}&{}&{}\\ {u_{11}} & {1}&{}&{}&{}\\{u_{12}}&{u_{13}}&{1}&{}&{}\\{u_{14}} & {u_{15}}&{u_{16}}&{1}&{}\\{u_{17}} & {u_{18}}&{u_{19}}&{u_{20}}&{1}
\end{smallmatrix}\right)\right];
\end{eqnarray*}
here $\nu(a)$ is a measurable subset of $[-1/2,1/2]^{20}$ dependent
only on $a\in A'$, and $c>0$ is an absolute constant. We now require
the following result that follows from Theorem~\ref{leme2ep} and the
fact that every elliptic curve over $\R$ has exactly five $5$-torsion
points defined over $\R$.
\begin{lemma}\label{lemstabsize}
  Let $v\in V_\R$ be any element having nonzero discriminant. Then the
  size of the stabilizer in $G_\R$ of $v$ is equal to $5$.
\end{lemma}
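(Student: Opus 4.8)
The plan is to deduce this directly from Theorem~\ref{leme2ep}, the only point requiring any thought being the computation of the $5$-torsion subgroup of $E(\R)$ for an elliptic curve $E$ over $\R$ and the observation that the stabilizer part of Theorem~\ref{leme2ep} applies even when $v$ is not assumed soluble.

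First, given $v\in V_\R$ with nonzero discriminant, let $I=I(v)$ and $J=J(v)$ be its invariants; since $\Delta(I,J)\neq 0$, the curve $E=E^{I,J}:y^2=x^3-\frac{I}{3}x-\frac{J}{27}$ is a genuine (smooth) elliptic curve over $\R$. Applying Theorem~\ref{leme2ep} with $K=\R$, whose stabilizer assertion holds for \emph{any} element of $V_\R$ with invariants $I$ and $J$ (soluble or not), we obtain an isomorphism $\Stab_{G_\R}(v)\cong E(\R)[5]$.

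It therefore remains to check that $|E(\R)[5]|=5$. As a real Lie group, $E(\R)$ is a compact abelian group of dimension $1$, hence isomorphic either to $\R/\Z$, when $E(\R)$ is connected, or to $\R/\Z\times\Z/2\Z$, when it has two connected components; which case occurs depends on the sign of $\Delta$, but both are permitted here since $v$ may have positive or negative discriminant. In either case the subgroup of elements killed by $5$ is the unique cyclic subgroup of order $5$ inside the $\R/\Z$ factor, the $\Z/2\Z$ factor contributing no $5$-torsion as $\gcd(2,5)=1$; thus $E(\R)[5]\cong\Z/5\Z$ has exactly $5$ elements. Combined with the isomorphism above, this gives $|\Stab_{G_\R}(v)|=5$, as claimed.

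Since each step is a direct invocation of an already-established fact, there is no substantive obstacle; the lemma is essentially the special case $K=\R$ of Theorem~\ref{leme2ep} together with the elementary structure theory of $E(\R)$. The only mild care needed is to note that the bijection in Theorem~\ref{leme2ep} requires solubility but the stabilizer statement does not (so the argument does not even use Lemma~\ref{lemalwaysrsol}), and to cover the two-component case of $E(\R)$ uniformly.
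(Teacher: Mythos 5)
Your proof is correct and follows exactly the route the paper takes: the lemma is stated there to follow from Theorem~\ref{leme2ep} together with the fact that $E(\R)[5]$ has exactly five elements for any elliptic curve $E$ over $\R$. Your explicit structure-theoretic verification that $E(\R)[5]\cong\Z/5\Z$ in both the connected and two-component cases, and your observation that the stabilizer clause of Theorem~\ref{leme2ep} applies without a solubility hypothesis, are precisely the points the paper leaves implicit.
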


For $h\in G_\R$, we regard $\FF h\cdot R^\pm$
as a multiset, where the multiplicity of an element $v\in V_\R$ is
equal to $\#\{g\in\FF:v\in gh\cdot R^\pm\}$.  By an argument identical to
that in \cite[\S2.1]{BS}, it follows that for any $h\in G_\R$ and any
$v\in V_\R^\pm$, the $G_\Z$-orbit of $v$ is represented $m(v)$ times in $\FF h\cdot R^\pm$, where
$$
m(v):=\#\Stab_{G_\R}(v)/\#\Stab_{G_\Z}(v).
$$ That is, the multiplicity of $v'$ in $\FF h\cdot R^\pm$,
summed over all $v'$ that are $G_\Z$-equivalent to $v$, is equal to
$m(v)$.

The set of elements in $V_\R^\pm$ that have a nontrivial stabilizer
in $G_\Z$ has measure $0$ in $V_\R^\pm$. Thus, by
Lemma~\ref{lemstabsize}, for any $h\in G_\R$ the multiset $\FF
h\cdot R^\pm$ is a $5$-fold cover of a fundamental domain for the
action of $G_\Z$ on $V_\R^\pm$.

Let $R^\pm(X)$ denote the set of
elements in $R^\pm$ having height less than 
$X$. It then follows, for any $G_\Z$-invariant
subset $S\subset V_\Z$, that
$5N(S;X)$ is equal to the number of strongly irreducible elements
in $\FF h\cdot R^\pm(X)\cap S$, with the slight caveat that the
(relatively rare---see Lemma \ref{lemtotred}) elements
with $G_\Z$ stabilizers of size $r$ ($r>1$) are counted with weight
$1/r$.

Counting strongly irreducible integer points in a single such domain $\FF
h\cdot\pnrg(X)$ is difficult because this domain is unbounded (although we 
will show it has finite volume). As in \cite{BS}, we simplify the counting by averaging over
lots of such domains, i.e., by averaging over a continuous range of
elements $g$ lying in a compact subset of $G_\R$.

\subsection{Averaging}

Let $G_0\subset G_\R$ be a compact, semialgebraic, left $K$-invariant set that is the
closure of an open nonempty set. Let $dh$ be the Haar measure on
$G_\R$ normalized as follows: we set $dh=dn\,d^\ast a\,dk$,
where $n$, $a$, $dn$, and $d^\ast a$ are given by $n=n(u_1,\ldots,u_{20})$,
$a=a(s_1,s_2,s_3,s_4,s_5,s_6,s_7,s_8)$, $dn=du_1\cdots du_{20}$, and 
\begin{equation}\label{dadn}
d^\ast a\,=\,s_1^{-20}s_2^{-30}s_3^{-30}s_4^{-20}s_5^{-20}s_6^{-30}s_7^{-30}s_8^{-20}d^\times
s_1\,d^\times s_2\,d^\times s_3\,d^\times s_4\,d^\times s_5\,d^\times
s_6\,d^\times s_7\,d^\times s_8,
\end{equation}
respectively, and $dk$ is Haar measure on $K$ normalized so that $K$
has volume $1$.

For any $G_\Z$-invariant set $S\subset V_\Z^\pm$, the arguments of
\S\ref{s31} imply that we have
\begin{equation}\label{eqavgfirst}
  N(S;X)=\displaystyle\frac{\int_{h\in G_0}\#\{\FF h\cdot\pnrg(X)\cap S^\irr\}dh}{C_{G_0}},
\end{equation}
where $S^\irr$ denotes the set of strongly irreducible elements in
$S$, and $C_{G_0}:=5\int_{h\in G_0}dh$.  We take the right hand side of \eqref{eqavgfirst} as the
definition of $N(S;X)$ also for sets $S$ that are not necessarily
$G_\Z$-invariant.  By an argument identical to the proof of
\cite[Theorem 2.5]{BS}, we see that the right hand side of
\eqref{eqavgfirst} is equal to
\begin{equation}\label{eqavgfinal}
  N(S;X)=\frac{1}{C_{G_0}}\int_{na\in\FF}\#\{B^\pm(n,a;X)\cap S^\irr\}dnd^\ast a,
\end{equation}
where $B^\pm(n,a;X)$ denotes the multiset $naG_0\cdot\pnrg(X)$. 

To estimate the number of integral points in $B^\pm(n,a;X)$, we use the following proposition due to Davenport.
\begin{proposition}\label{davlem}
  Let $\mathcal R$ be a bounded, semi-algebraic multiset in $\R^n$
  having maximum multiplicity $m$, and that is defined by at most $k$
  polynomial inequalities each having degree at most $\ell$.  
  Then the number of integer lattice points $($counted with
  multiplicity$)$ contained in the region $\mathcal R$ is
\[\Vol(\mathcal R)+ O(\max\{\Vol(\bar{\mathcal R}),1\}),\]
where $\Vol(\bar{\mathcal R})$ denotes the greatest $d$-dimensional 
volume of any projection of $\mathcal R$ onto a coordinate subspace
obtained by equating $n-d$ coordinates to zero, where 
$d$ takes all values from
$1$ to $n-1$.  The implied constant in the second summand depends
only on $n$, $m$, $k$, and $\ell$.
\end{proposition}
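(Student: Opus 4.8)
The plan is to reduce the general semi-algebraic multiset to the case of a genuine bounded region (multiplicity one) by first splitting $\mathcal R$ into its multiplicity-$j$ pieces for $1 \le j \le m$, counting each with the appropriate weight, and then observing that since $m$ is bounded it suffices to prove the bound $\#(\mathcal R \cap \Z^n) = \Vol(\mathcal R) + O(\max\{\Vol(\bar{\mathcal R}),1\})$ for a bounded semi-algebraic set defined by at most $k$ polynomial inequalities of degree at most $\ell$, with the implied constant depending only on $n,k,\ell$. The heart of the matter is then a classical induction on the dimension $n$. For $n=1$ the region $\mathcal R \subset \R$ is a finite union of intervals, the number of such intervals being bounded in terms of $\ell$ (a degree-$\ell$ polynomial has at most $\ell$ real roots), so $\#(\mathcal R \cap \Z) = \Vol(\mathcal R) + O(1)$ with an implied constant depending only on $\ell$; here $\Vol(\bar{\mathcal R})$ is interpreted as $1$ by convention, matching the ``$\max\{\cdot,1\}$''.

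For the inductive step I would fiber $\mathcal R$ over the first coordinate axis. For each integer $t$, let $\mathcal R_t \subset \R^{n-1}$ denote the slice $\{x' : (t,x') \in \mathcal R\}$; this is again a bounded semi-algebraic set cut out by at most $k$ polynomial inequalities of degree at most $\ell$ (we substitute $x_1 = t$), so the inductive hypothesis gives $\#(\mathcal R_t \cap \Z^{n-1}) = \Vol_{n-1}(\mathcal R_t) + O(\max\{\Vol(\bar{\mathcal R_t}),1\})$. Summing over the $O(\Vol(\bar{\mathcal R})+1)$ integers $t$ in the (bounded) projection of $\mathcal R$ to the first axis, the main terms $\sum_t \Vol_{n-1}(\mathcal R_t)$ differ from $\int \Vol_{n-1}(\mathcal R_t)\,dt = \Vol_n(\mathcal R)$ by at most the total variation of $t \mapsto \Vol_{n-1}(\mathcal R_t)$, which is bounded by $O(\max\{\Vol(\bar{\mathcal R}),1\})$ since each projection of $\mathcal R$ onto an $(n-1)$-dimensional coordinate subspace has volume at most $\Vol(\bar{\mathcal R})$ and the number of ``monotone pieces'' of this function is bounded in terms of $k,\ell,n$ (the set where the topology of the slice changes is contained in the zero locus of finitely many polynomials of controlled degree). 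Each error term $O(\max\{\Vol(\bar{\mathcal R_t}),1\})$ is itself controlled: $\Vol(\bar{\mathcal R_t})$ is at most $\Vol(\bar{\mathcal R})$ because a projection of a slice of $\mathcal R$ is a slice of a projection of $\mathcal R$, hence has no larger volume; and there are only $O(\max\{\Vol(\bar{\mathcal R}),1\})$ relevant values of $t$. Combining, $\#(\mathcal R \cap \Z^n) = \Vol_n(\mathcal R) + O(\max\{\Vol(\bar{\mathcal R}),1\})$.

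The main obstacle is the uniformity of all the combinatorial quantities — the number of interval-components of a one-dimensional slice, the number of monotonicity intervals of the volume function $t \mapsto \Vol_{n-1}(\mathcal R_t)$, and the number of connected components of each slice — in terms of $n,k,\ell$ alone, independent of the coefficients of the defining polynomials. This is where one invokes quantitative real algebraic geometry: the number of connected components of a semi-algebraic set in $\R^m$ defined by $k$ polynomials of degree $\le \ell$ is bounded by a function of $m,k,\ell$ only (e.g.\ the Oleinik–Petrovsky–Thom–Milnor bounds), and the ``critical values'' in $t$ at which the slice $\mathcal R_t$ changes combinatorial type are roots of a polynomial whose degree is bounded in the same parameters. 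Granting this standard input, the induction closes cleanly and the implied constant depends only on $n,m,k,\ell$ as claimed. (This is exactly the argument of Davenport \cite{Davenport}, and we use it in the form stated, for instance, in \cite[Prop.\ 3.1]{dodpf}.)
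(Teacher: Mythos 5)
The paper itself gives no proof of Proposition~\ref{davlem}; it cites the result to Davenport \cite{Davenport1,Davenport2}. Your reconstruction of Davenport's slicing induction has the right ingredients (the $n=1$ base case via root counting, Fubini for $\int\Vol_{n-1}(\mathcal R_t)\,dt$, piecewise monotonicity of the slice volumes with uniform control via Oleinik--Petrovsky--Thom--Milnor, and the inclusion giving $\Vol(\bar{\mathcal R_t})\le\Vol(\bar{\mathcal R})$), but the way you sum the per-slice error terms is a genuine gap. You bound each slice error $O(\max\{\Vol(\bar{\mathcal R_t}),1\})$ by $O(\max\{\Vol(\bar{\mathcal R}),1\})$, note that there are $O(\max\{\Vol(\bar{\mathcal R}),1\})$ relevant slices, and then ``combine'' to get a linear bound --- but multiplying those two quantities yields $O(\max\{\Vol(\bar{\mathcal R}),1\}^2)$, which is quadratically worse than the claim. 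For $\mathcal R$ a box $[0,L]^n$ one has $\Vol(\bar{\mathcal R})\asymp L^{n-1}$, so your argument gives an error $O(L^{2n-2})$, which already for $n=2$ is no smaller than the main term $L^2$, so no asymptotic follows.

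The fix is to unfold the error sum rather than majorize termwise. Bound $\Vol(\bar{\mathcal R_t})\le\sum_{S}\Vol_{|S|}(\pi_S(\mathcal R_t))$, the sum over proper nonempty $S\subset\{2,\dots,n\}$ (a sum of boundedly many terms), then exchange the order of summation over $t$ and $S$. For each fixed $S$, the set $\pi_S(\mathcal R_t)$ is precisely the slice at $x_1=t$ of the projection $\pi_{\{1\}\cup S}(\mathcal R)$, so $\sum_t\Vol_{|S|}(\pi_S(\mathcal R_t))$ is a Riemann sum for $\Vol_{|S|+1}(\pi_{\{1\}\cup S}(\mathcal R))$; since $|S|+1\le n-1$, this volume is at most $\Vol(\bar{\mathcal R})$, and the Riemann-sum discrepancy is controlled by the same monotone-pieces argument run one dimension down. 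This keeps the total error linear in $\Vol(\bar{\mathcal R})$. Two smaller imprecisions you should also repair: the Riemann sum does not differ from the integral by the total variation alone (a positive constant on a short subinterval has zero variation but nonzero discrepancy; the correct bound is the number of monotone pieces times $\sup|f|$), and your stated reason for $\Vol(\bar{\mathcal R_t})\le\Vol(\bar{\mathcal R})$ --- ``a projection of a slice is a slice of a projection, hence has no larger volume'' --- is not a valid volume comparison since slicing drops a dimension; the correct reason is the same-dimension containment $\pi_S(\mathcal R_t)\subseteq\pi_S(\mathcal R)$ for $S\subset\{2,\dots,n\}$.
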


Proposition \ref{davlem} yields good estimates on the number of
integral points in $B^\pm(n,a;X)$, for $a=a(s_1,\ldots,s_8)$, when the
$s_i$'s are ``small'' compared to $X$. However, when any of the
$s_i$'s are ``large'', the error term in Proposition \ref{davlem}
dominates the main term. To resolve this issue, in \S\ref{s33} we divide the
fundamental domain into a ``main body'' and a ``cuspidal region''. Proposition \ref{davlem} will yield good estimates on the
number of integral points in the main body. We then bound the number
of strongly irreducible elements in the cuspidal region, the volume of
the cuspidal region, and the number of reducible elements in the main
body.  These results together then allow us to deduce that
$N(V^\pm_\Z;X)$ is well-approximated by the volume $\FF\cdot
R^\pm(X)$. Finally, in~\S3.4, we compute the volume of the region
$\FF\cdot R^\pm(X)$, thus completing the proof of Theorem
\ref{thsec3main}.

\subsection{Conditions on reducibility and cutting off the cusp}\label{s33}

Our first aim in the subsection is to prove that the number of
strongly irreducible points in the ``cuspidal region'' of the
fundamental domain is negligible:
\begin{proposition}\label{propaneq0}
  Let $V^\irr_\Z(0)$ denote the set of strongly irreducible points $(A,B,C,D,E)\in V_\Z$ satisfying $a_{12}=0$. Then
$N(V^\irr_\Z(0);X)=O(X^{499/600})$.
\end{proposition}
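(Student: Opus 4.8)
The plan is to show that the condition $a_{12}=0$ carves out a "thin slice" of the fundamental domain, thin enough that even counting \emph{all} integer points in it (not just strongly irreducible ones) gives a bound of the required strength. First I would invoke the averaging formula \eqref{eqavgfinal}: writing $S(0):=\{v\in V_\Z : a_{12}(v)=0\}$, we have
$$N(V_\Z^\irr(0);X)\;\le\;\frac{1}{C_{G_0}}\int_{na\in\FF}\#\{B^\pm(n,a;X)\cap S(0)\}\,dn\,d^\ast a,$$
since $V_\Z^\irr(0)\subset S(0)$. The coordinate $a_{12}$ is a single linear coordinate on $V_\R\cong\R^{50}$, so $S(0)$ is the intersection of the lattice $V_\Z$ with a coordinate hyperplane. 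I would then apply Davenport's Proposition~\ref{davlem} to the $49$-dimensional multiset $B^\pm(n,a;X)\cap\{a_{12}=0\}$: the number of integer points in it is $\Vol$ of that slice plus a lower-order projection error. The point is that both the volume of the slice and all its projections are governed by the same torus scaling as in \cite{BS}, but with the factor coming from the $a_{12}$-direction removed.

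The core computation is to track how the coordinate $a_{12}$ scales under the torus element $a=a(s_1,\dots,s_8)$ acting on the "leading" entry of the fundamental element. Since $a_{12}$ sits in the $(1,2)$-slot of the first skew-symmetric matrix $A$, and the first $\GL_5$ factor scales the $(1,2)$-entry by $s_1^{-4}s_2^{-3}s_3^{-2}s_4^{-1}\cdot s_1 s_2^{-3}s_3^{-2}s_4^{-1}=s_1^{-3}s_2^{-6}s_3^{-4}s_4^{-2}$, while the second factor contributes the weight attached to the first matrix among $(A,B,C,D,E)$, namely $s_5^{-4}s_6^{-3}s_7^{-2}s_8^{-1}$, one finds that $a_{12}$ has the \emph{smallest} such weight among all $50$ coordinates — it is the coordinate in the deepest part of the cusp. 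Requiring $a_{12}=0$ therefore removes the coordinate whose "expected number of nonzero lattice entries" is largest precisely in the region of $\FF$ where the main-term volume concentrates; equivalently, in the bulk of the integral the slice $\{a_{12}=0\}$ contains $O(1)$ of the total $\asymp X^{5/6}$ points per fiber rather than $\asymp X^{5/6}$ of them. Carrying this through the $d^\ast a$-integral with the explicit weights in \eqref{dadn}, and using that $\Vol(\FF\cdot R^\pm(X))\asymp X^{5/6}$, one should gain a power of $X$ — the target exponent $499/600$ being $5/6 = 500/600$ shaved by $1/600$, which is exactly one unit of the finest scaling $X^{1/60}$ on the coordinates divided by the relevant weight exponents.

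The main obstacle, as always in this circle of ideas, is the cusp: the torus directions $s_i\to\infty$ make the slice $\{a_{12}=0\}$ unbounded, so one cannot simply integrate the naive volume bound — the projection error term in Proposition~\ref{davlem} can dominate when several $s_i$ are large. The way around this is the same dichotomy used throughout \S\ref{s33}: split $\FF$ into a main body, where all $s_i$ are bounded by a small power of $X$ and Davenport applies cleanly to the slice, and a cuspidal tail, where instead one bounds the number of integer points in $B^\pm(n,a;X)\cap\{a_{12}=0\}$ by noting that forcing $a_{12}=0$ kills the unique "bottom" coordinate, so that the next coordinates up (the ones with the next-smallest torus weights) are themselves forced to vanish or to be sparse, allowing an induction on the depth of the cusp exactly as in \cite{dodpf}. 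I expect that, after the explicit weight bookkeeping, the main-body contribution is $O(X^{499/600})$ directly from the volume estimate, and the cuspidal contribution is absorbed by the same fine-partition-and-certify mechanism promised in the introduction; the only genuinely delicate point is checking that removing the $a_{12}$-direction does not accidentally enlarge some projection in \eqref{davlem} enough to spoil the gain, which is a finite check on the $50$ coordinate weights.
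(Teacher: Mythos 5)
Your opening move contains the proposal's central error: you claim the condition $a_{12}=0$ cuts out a slice ``thin enough that even counting \emph{all} integer points in it (not just strongly irreducible ones) gives a bound of the required strength.'' This is false, and the paper's proof is structured precisely to avoid making that claim. Deep in the cusp of $\FF$, once $X^{1/60}w(a_{12})\ll 1$, \emph{every} integral element in $\FF h\cdot R^\pm(X)$ has $a_{12}=0$, and as one goes deeper, whole blocks of coordinates are forced to vanish while the remaining coordinates can be very large. The integral points accumulating there are not rare — they are in fact overwhelmingly numerous — but they are reducible. Proposition~\ref{propreducibility}, which identifies thirteen explicit vanishing patterns forcing $\Delta=0$ or triviality in the Selmer group, exists exactly so that these points can be \emph{excluded} from the count. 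Without pruning by those reducibility criteria, the integral $\int X^{(50-\#\ZZ)/60}\,w(\prod_{u\notin\ZZ}u)\,d^\ast a$ in \eqref{estvzzz} does not converge to anything of order $X^{499/600}$; for the ``bad'' sets $\ZZ$ (e.g.\ those containing all $a_{ij}$, case~(7)) one simply cannot choose a monomial $\pi_\ZZ$ making every exponent in \eqref{eqpifactor} negative with $\deg\pi_\ZZ<\#\ZZ$. Your proposal never invokes irreducibility at all until a vague closing sentence, so it is attempting to prove a stronger statement that is simply not true.

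Toward the end you gesture at the right mechanism — ``the next coordinates up are themselves forced to vanish or to be sparse, allowing an induction on the depth of the cusp'' — but this contradicts your opening claim and, more importantly, you have not supplied the content of that induction. The entire weight of the paper's proof lies in Proposition~\ref{propreducibility} (which patterns can be pruned), Lemma~\ref{lemsetsz} (the ten maximal surviving sets $\ZZ$), and the explicit monomials $\pi_\ZZ$ of Table~\ref{tablecusp1} together with the auxiliary $\pi_{\ZZ,u}$ of Table~\ref{tablecusp2}, which must simultaneously satisfy a nonnegativity constraint, a negativity-of-exponents constraint, and a divisibility constraint, and whose construction produces the key numerical gain $\#\ZZ-\deg\pi_\ZZ\geq 0.1$ responsible for the exponent $499/600=(50-0.1)/60$. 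Describing all of this as ``a finite check on the $50$ coordinate weights'' is not a proof, and your heuristic that the exponent $1/600$ is ``exactly one unit of the finest scaling $X^{1/60}$ divided by the relevant weight exponents'' does not match the actual bookkeeping. The proposal correctly locates the shape of the difficulty but neither resolves nor correctly diagnoses it.
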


We begin by describing sufficient conditions to ensure that
points $(A,B,C,D,E)\in V_\Z$ are not strongly irreducible.

\begin{lemma}\label{lemcondredprelim}
  Let $v=(A,B,C,D,E)$ be an element in $V_\Z$ and let
  $Q_1,\;Q_2,\;Q_3,\;Q_4$, and $Q_5$ denote the five $4\times 4$ sub-Pfaffians of $At_1+Bt_2+Ct_3+Dt_4+Et_5$. Then
  \begin{itemize}
  \item[{\rm (a)}] Let $\mathcal{C}_v$ be the curve in $\P^4$ defined by
    $Q_i(t_1,t_2,t_3,t_4,t_5)=0$ for all $i\in\{1,\ldots,5\}$. If $\mathcal{C}_v$ is not a smooth genus one curve, then the
    discriminant of $v$ is $0$.
  \item[{\rm (b)}] If $Q_1$ is reducible over $\overline{\Q}$ $($i.e., $Q_1$ 
  factors into a product of linear forms over $\overline{\Q})$, then the discriminant of $v$ is $0$.
  \item[{\rm (c)}] Let $Q_1',\;Q_2',\;Q_3',\;Q_4'$, and $Q_5'$ be the
    quadratic forms in four variables obtained from $Q_1$, $Q_2$,
    $Q_3$, $Q_4$, and $Q_5$, respectively, by setting $t_5=0$. If the
    intersection of the quadrics $Q_i'(t_1,t_2,t_3,t_4)=0$ in $\P^3(\overline{\Q})$ consists of a single point, then $v$ corresponds to the identity element in the $5$-Selmer group of $E^{I(v),J(v)}$. 
  \end{itemize}
\end{lemma}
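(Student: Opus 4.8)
The plan is to treat the three parts essentially independently, since each is a statement about when the geometry of the quadric intersection degenerates, and in each case we want to conclude that $v$ is not strongly irreducible (either $\Delta(v)=0$ or $\mathcal{C}_v$ is the trivial $5$-covering). Throughout I would work over $\overline{\Q}$ and invoke Theorem~\ref{leme2ep} (applied to $K=\overline{\Q}$, or to $\Q$ via Theorem~\ref{propselparam}) together with the classical fact, already cited in the excerpt via \cite[Proposition~2.3]{Fisher1}, that for $v$ of nonzero discriminant the intersection of the five sub-Pfaffian quadrics is a smooth genus one curve whose Jacobian is $E^{I(v),J(v)}$.

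For part (a), this is really the contrapositive of the quoted result of Fisher: if $\Delta(v)\neq 0$ then $\mathcal{C}_v$ \emph{is} a smooth genus one curve, so if $\mathcal{C}_v$ fails to be a smooth genus one curve we must have $\Delta(v)=0$. I would simply cite \cite[Proposition~2.3]{Fisher1} (or \cite[Remark~1.23]{algebra}) here and note that the discriminant polynomial $\Delta(I,J)=(4I^3-J^2)/27$ vanishes exactly on the locus where the associated elliptic curve is singular, which is precisely the locus where $\mathcal{C}_v$ degenerates. For part (b), the point is a dimension/degree count: if $Q_1$ factors as a product of two linear forms $\ell_1\ell_2$ over $\overline{\Q}$, then the quadric $\{Q_1=0\}$ is a union of two hyperplanes in $\P^4$, so $\mathcal{C}_v\subset\{Q_1=0\}$ decomposes according to these hyperplanes; restricting the remaining four quadrics to each hyperplane $\{\ell_i=0\}\cong\P^3$ gives an intersection of four quadrics in $\P^3$, which generically is empty and in any case cannot be an irreducible smooth curve of genus one (a genus one normal quintic in $\P^4$ is not contained in a hyperplane, since its hyperplane sections have degree $5$). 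Hence $\mathcal{C}_v$ is not a smooth genus one curve, and part (a) gives $\Delta(v)=0$. The mild subtlety to handle carefully is the case where $Q_1$ is a perfect square or $\ell_1=\ell_2$, but the same hyperplane-containment argument applies verbatim.

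Part (c) is the substantive one and I expect it to be the main obstacle. Here we assume $\Delta(v)\neq 0$ (otherwise $v$ is not strongly irreducible by definition and there is nothing to prove) and we must show that the covering $\mathcal{C}_v$ is \emph{trivial}, i.e.\ corresponds to the identity of $S_5(E^{I,J})$ — equivalently, by Theorem~\ref{leme2ep}, that $v$ lies in the same $G_{\overline{\Q}}$-orbit (indeed $G_\Q$-orbit, after a descent argument) as the distinguished element, equivalently that $\mathcal{C}_v$ has a rational point. The hypothesis is that setting $t_5=0$ and intersecting the resulting four quadrics $Q_i'$ in $\P^3$ yields a single point $P\in\P^3(\overline{\Q})$. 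The hyperplane $\{t_5=0\}\cap\mathcal{C}_v$ is a hyperplane section of the genus one quintic, hence (if $\mathcal{C}_v$ is genuinely a smooth quintic) a divisor of degree $5$; the condition that the \emph{scheme-theoretic} intersection of the $Q_i'$ is a single reduced point forces this degree-$5$ divisor to be supported at one point, i.e.\ $5\cdot P$ as a divisor on $\mathcal{C}_v$ — but a degree-$5$ divisor of this shape, being linearly equivalent to a hyperplane section, is exactly the data of a rational point on the curve (the point $P$ itself, which is then automatically defined over the base field because it is cut out by forms defined over it and is the unique such point, hence Galois-stable). So $\mathcal{C}_v$ has a rational point and the covering is trivial. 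The delicate points to nail down, and where I would spend the real work, are: (i) ruling out, or accommodating, the case where $\mathcal{C}_v\cap\{t_5=0\}$ is not zero-dimensional (e.g.\ $\{t_5=0\}$ could contain a component), which one dispatches using part (b)-type reasoning plus $\Delta(v)\neq 0$; (ii) checking that "single point" in $\P^3$ really does pin down the hyperplane section as $5P$ with $P$ rational — a local argument at $P$ or a direct argument that the ideal generated by the $Q_i'$ has colength $5$ using the Buchsbaum--Eisenbud structure of the Pfaffian ideal; and (iii) confirming the passage from "$\mathcal{C}_v$ is $\overline{\Q}$-soluble" (which is automatic for any curve over an algebraically closed field) to "$\mathcal{C}_v$ is $\Q$-soluble" — this is where the uniqueness of $P$ is essential, since a Galois-stable point defined over $\overline{\Q}$ that is unique is defined over $\Q$. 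This last observation is what turns a purely geometric degeneration into the arithmetic conclusion that the $5$-Selmer class is trivial.
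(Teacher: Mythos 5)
Your plan for parts (a) and (b) works, and part (b) is a genuinely self-contained alternative to the paper's approach: you derive the conclusion from the non-degeneracy of a genus one normal quintic (it spans $\P^4$, so cannot lie in a hyperplane), whereas the paper simply cites \cite[Theorem~5.10(i)]{Fisher1}. Your argument is slightly longer but removes a black box, which is a reasonable trade.

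For part (c), however, there is a genuine logical error in the framing, even though your geometric instinct ($5P\sim D$ is the key condition) matches the paper. You write that triviality of the covering is ``equivalently that $\mathcal{C}_v$ has a rational point,'' and conclude ``So $\mathcal{C}_v$ has a rational point and the covering is trivial.'' This equivalence is false: a rational point on $\mathcal{C}_v$ only shows that the Selmer class lies in the image of $E(\Q)/5E(\Q)\hookrightarrow S_5(E)$, i.e.\ that $v$ is $\Q$-\emph{soluble}, not that it is the identity. The correct statement, and the one the paper uses, is that $v$ corresponds to a torsor $T$ for $E[5]$, namely the subscheme $T=\{P\in\mathcal{C}_v : 5P\sim D\}$ where $D$ is the fixed rational hyperplane class (by \cite[Remarks 1.15, 1.20, 1.23]{algebra}); the Selmer class is trivial if and only if $T$---not $\mathcal{C}_v$---has a rational point. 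Your computation does exhibit a rational $P$ with $5P\sim D$, so it does produce a point of $T$, but because you never set up $T$ you are drawing the conclusion from the wrong hypothesis. If you replace the phrase ``$\mathcal{C}_v$ has a rational point'' by ``the torsor $T$ for $E[5]$ has a rational point, namely $P$,'' the argument becomes correct and agrees with the paper's.
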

\begin{proof}
Parts (a) and (b) follow from \cite[Theorem~4.4(ii)]{Fisher1} and \cite[Theorem~5.10(i)]{Fisher1}, respectively.
For Part (c), note that an element in the $5$-Selmer group of an elliptic curve $E$ may viewed as a torsor for $E[5]$ (see, e.g., \cite[\S 1.4]{algebra}). By 
\cite[Remarks 1.15, 1.20, and 1.23]{algebra}, it follows that given
$v\in V_\Q$, the corresponding torsor for $E[5]$ is obtained by taking the set
of points $P$ on $C_v$ such that $5\cdot P$ is linearly equivalent to
$D$, where $D$ is the hyperplane divisor corresponding to a fixed
rational hyperplane section $H$. If the hypotheses of Part (c) are satisfied, then $C_v$ intersects the hyperplane section
$H$ given by $t_5=0$ in a rational point $P$ with multiplicity
$5$. Therefore the torsor for $E[5]$ contains a rational point, namely
$P$. It follows that $C_v$ is the trivial $5$-covering of $E$. 
\end{proof}

\begin{proposition}\label{propreducibility}
  Let $v=(A,B,C,D,E)\in V_\Z$ be such that all the variables in at least one of the following sets vanish:
  \begin{itemize}
  \item[{\rm (1)}]$\{a_{12},a_{13}\}\cup\{b_{12},b_{13}\}\cup\{c_{12},c_{13}\}\cup\{d_{12},d_{13}\}\cup\{e_{12},e_{13}\}$
  \item[{\rm (2)}]$\{a_{12},a_{13},a_{14}\}\cup\{b_{12},b_{13},b_{14}\}\cup\{c_{12},c_{13},c_{14}\}\cup\{d_{12},d_{13},d_{14}\}$
  \item[{\rm (3)}]$\{a_{12},a_{13},a_{23}\}\cup\{b_{12},b_{13},b_{23}\}\cup\{c_{12},c_{13},c_{23}\}\cup\{d_{12},d_{13},d_{23}\}$
  \item[{\rm (4)}]$\{a_{12},a_{13},a_{14},a_{15},a_{23},a_{24},a_{25}\}\cup\{b_{12},b_{13},b_{14},b_{15},b_{23},b_{24},b_{25}\}$
  \item[{\rm (5)}]$\{a_{12},a_{13},a_{14},a_{23},a_{24},a_{34}\}\cup\{b_{12},b_{13},b_{14},b_{23},b_{24},b_{34}\}$
   \item[{\rm (6)}]$\{a_{12},a_{13},a_{14},a_{23},a_{24}\}\cup\{b_{12},b_{13},b_{14},b_{23},b_{24}\}\cup\{c_{12},c_{13},c_{14},c_{23},c_{24}\}$
 \item[{\rm (7)}]$\{a_{12},a_{13},a_{14},a_{15},a_{23},a_{24},a_{25},a_{34},a_{35},a_{45}\}$
 \item[{\rm (8)}]$\{a_{12},a_{13},a_{14},a_{15}\}\cup\{b_{12},b_{13},b_{14},b_{15}\}\cup\{c_{12},c_{13},c_{14},c_{15}\}$

  \item[{\rm (9)}]$\{a_{12},a_{13},a_{14},a_{15},a_{23},a_{24},a_{25}\}\cup\{b_{12}\}\cup\{c_{12}\}\cup\{d_{12}\}\cup\{e_{12}\}$
  \item[{\rm (10)}]$\{a_{12},a_{13},a_{14},a_{15},a_{23},a_{24}\}\cup\{b_{12},b_{13},b_{14},b_{15},b_{23},b_{24}\}\cup\{c_{12}\}\cup\{d_{12}\}\cup\{e_{12}\}$

  \item[{\rm (11)}]$\{a_{12},a_{13},a_{14},a_{15},a_{23},a_{24},a_{25},a_{34},a_{35}\}\cup\{b_{12},b_{13},b_{23}\}\cup\{c_{12},c_{13},c_{23}\}$
  \item[{\rm (12)}]$\{a_{12},a_{13},a_{14},a_{15},a_{23},a_{24},a_{25},a_{34},a_{35}\}\cup\{b_{12},b_{13}\}\cup\{c_{12},c_{13}\}\cup\{d_{12},d_{13}\}$

  \item[{\rm (13)}]$\{a_{12},a_{13},a_{14},a_{15},a_{23},a_{24},a_{25},a_{34}\}\cup\{b_{12},b_{13},b_{14},b_{15},b_{23},b_{24}\}\cup\{c_{12},c_{13},c_{14},c_{23}\}\cup\{d_{12},d_{13}\}$
  \end{itemize}
  Then $(A,B,C,D,E)$ is not strongly irreducible.
\end{proposition}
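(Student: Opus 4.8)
The plan is to treat each of the thirteen vanishing patterns by exhibiting a degeneration of the $5\times 5$ skew-symmetric matrix of linear forms $M(t)=At_1+Bt_2+Ct_3+Dt_4+Et_5$ that triggers one of the three conclusions of Lemma~\ref{lemcondredprelim}: either one of the five sub-Pfaffians $Q_i$ of $M(t)$ factors over $\overline{\Q}$ into linear forms, so that $\Delta(v)=0$ by part (b); or the scheme $\mathcal{C}_v\subset\P^4$ fails to be a smooth genus one curve — because it contains a line, a plane conic, or a whole plane, or because it acquires a singular point — so that $\Delta(v)=0$ by part (a); or some coordinate hyperplane section $\mathcal{C}_v\cap\{t_k=0\}$ is supported at a single point, so that $v$ represents the identity $5$-Selmer class by part (c). In every outcome $v$ is not strongly irreducible. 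Since strong irreducibility is invariant under the full group $G_\Q$, we may normalize freely: a permutation matrix $g_1\in\GL_5(\Z)$ permutes the rows-and-columns of $M(t)$ (hence permutes the $Q_i$ and the coordinate hyperplanes of $\P^4$), while $g_2\in\GL_5(\Q)$ acts linearly on $t_1,\dots,t_5$ and hence mixes $A,\dots,E$. The only computational input is the explicit bilinear shape of each $Q_i$ in the entries of $M(t)$, together with the classical identity $M(t)\cdot(Q_1,-Q_2,Q_3,-Q_4,Q_5)^{t}=0$ (the adjugate of an odd skew matrix has rank one), which we use to control the tangent spaces of $\mathcal{C}_v$.

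For the patterns (1)--(3) the conclusion is immediate: in (1) the entries $m_{12},m_{13}$ vanish identically, so $Q_4=m_{15}m_{23}$ and $Q_5=m_{14}m_{23}$ are products of two linear forms, while in (2) and (3) three entries of the $\{1,2,3,4\}$-block of $M(t)$ are multiples of $t_5$, so $Q_5$ acquires the linear factor $t_5$; a permutation moves the reducible Pfaffian into position $1$ and part (b) gives $\Delta(v)=0$ (if the complementary factor is identically zero then so is $Q_i$, and still $\Delta(v)=0$). For the patterns (4)--(8) the vanishing confines $\mathcal{C}_v$ to a linear degeneracy visible on a coordinate subspace: in (4) and (5) every $Q_i$ vanishes identically along the line $\ell=\{t_3=t_4=t_5=0\}$, so $\ell\subset\mathcal{C}_v$; in (7), $M(t)$ is independent of $t_1$, so $\mathcal{C}_v$ is a cone with vertex $(1:0:0:0:0)$; and in (6) and (8), enough entries of $M(t)$ vanish on the plane $P=\{t_4=t_5=0\}$ that all but one or two of the $Q_i|_P$ vanish identically, so $\mathcal{C}_v\cap P$ contains a line or a conic inside $P\cong\P^2$. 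In all of these, part (a) applies, and the more degenerate sub-cases only enlarge the linear subvariety inside $\mathcal{C}_v$.

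The patterns (9)--(13) carry the real content. After a permutation one has $m_{12}=0$ (or $m_{12}\in\langle t_5\rangle$), a long ``staircase'' of vanishing entries in $A$, and shorter staircases in $B,C,D,E$; and, possibly after a $g_2$-normalization of the remaining free entry in $m_{25}$, the coordinate point $P_\infty=(1:0:\dots:0)$ lies on $\mathcal{C}_v$. The $A$-staircase makes several of the $Q_i$ independent of $t_1$, so the tangent space of $\mathcal{C}_v$ at $P_\infty$ is cut out by the linear parts at $P_\infty$ of the remaining sub-Pfaffians only; a short computation (via $m_{ij}=a_{ij}+\overline{m}_{ij}$ in the chart $t_1=1$) shows these linear parts are of the form $a_{45}\,\overline{m}_{ij}$ or $a_{35}\,\overline{m}_{ij}$, and the $B$- and $C$-staircases force the relevant $\overline{m}_{ij}$ into the small coordinate subspace $\langle t_4,t_5\rangle$ or $\langle t_5\rangle$, hence the linear parts to be linearly dependent. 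The tangent space at $P_\infty$ then has dimension $\geq 2$, so $\mathcal{C}_v$ is singular there and part (a) applies. In the residual sub-cases one of a handful of explicit $2\times 2$-minor-type expressions in the entries vanishes, and then either $\mathcal{C}_v$ contains the line $\ell$ — apply part (a) — or a sub-Pfaffian such as $Q_5|_{t_5=0}$ becomes a pure square like $d_{14}d_{23}t_4^2$, forcing $\mathcal{C}_v\cap\{t_5=0\}\subseteq\{t_4=0\}$; a further rank-$\le 1$ determinantal computation then collapses $\mathcal{C}_v\cap\{t_5=0\}$ to the single point $P_\infty$, and part (c) applies.

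The hard part is the bookkeeping for (9)--(13): done naively, these fragment into many sub-cases. The efficient organization is to make the central dichotomy the single criterion ``$\mathcal{C}_v$ is singular at $P_\infty$'' — equivalently, $\mathrm{rank}\,M(P_\infty)\le 2$ with non-transverse differential — and to admit ``$\mathcal{C}_v$ contains $\ell$'' and ``the section $\{t_5=0\}$ collapses to a point'' as the only two exceptional fallbacks, each detected by the vanishing of one explicit minor. With this scheme the whole proposition reduces to the short Pfaffian computations indicated above, and in every pattern (and sub-case) $v$ has zero discriminant or represents the identity $5$-Selmer class, hence is not strongly irreducible.
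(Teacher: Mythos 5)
Your proposal is correct in spirit but follows a genuinely different route from the paper for the middle cases, so let me compare the two and then flag where your write-up is thin.

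For Cases (1)--(3) you and the paper do the same thing (exhibit a sub-Pfaffian that factors into linear forms and invoke Lemma~\ref{lemcondredprelim}(b)); for Case (13) you both invoke part (c) via the collapsed $\{t_5=0\}$ section. The real divergence is in Cases (4)--(12). The paper handles (4)--(6) via the Gram-matrix shape (line in $\mathcal{C}_v$) and then disposes of (7)--(12) with a purely algebraic weight argument: the discriminant is weighted-homogeneous of degree $60$ with $a(m)=\cdots=e(m)=12$ and $1(m)=\cdots=5(m)=24$ for every monomial $m$, and a short counting argument reduces each of (7)--(12) to one of (3), (4), (5). You instead run a uniformly geometric argument: (4)--(5) give a line in $\mathcal{C}_v$, (7) gives a cone, (6) and (8) give a line or conic in the plane $\{t_4=t_5=0\}$ (I verified that in (6), $Q_3|_P=Q_4|_P=Q_5|_P=0$ and $Q_1|_P, Q_2|_P$ share the factor $m_{34}|_P$, and in (8) row~$1$ of $M|_P$ vanishes so $Q_2|_P=\cdots=Q_5|_P=0$ --- so this works, and is arguably cleaner than the paper's $G_\C$-translate for (6)); and (9), (11), (12) give a singular point of $\mathcal{C}_v$ at $P_\infty=(1{:}0{:}0{:}0{:}0)$ because the Jacobian of $(Q_1,\dots,Q_5)$ there has rank $\le 2 < 3$. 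I checked the gradient computations: in (11) all five gradients lie in the $\langle t_4,t_5\rangle$-plane; in (12) $\nabla Q_2,\nabla Q_3$ lie on the $t_5$-axis and $\nabla Q_4=\nabla Q_5=0$; in (9) $\nabla Q_3=\nabla Q_4=\nabla Q_5=0$ because $b_{12}=c_{12}=d_{12}=e_{12}=0$ kills $(\partial_k m_{12})$. So your singularity-at-$P_\infty$ route genuinely works for these three, and is a legitimate alternative to the weight argument. What the paper's approach buys is brevity and no case-by-case differential calculus; what yours buys is a transparent geometric reason for $\Delta(v)=0$.

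Two places where your write-up is too thin to count as a proof. First, Case (10): there $a_{25}$ is unconstrained, so $A=M(P_\infty)$ generically has rank $4$, and $P_\infty\notin\mathcal{C}_v$; your parenthetical ``possibly after a $g_2$-normalization of the remaining free entry in $m_{25}$'' is exactly the needed move ($t_2\mapsto t_2+\lambda t_1$ with $\lambda=-a_{25}/b_{25}$, falling back to Case (4) when $b_{25}=0$ too), but you should say so explicitly and note that the resulting $A'$ then has the Case (9) vanishing pattern and $c_{12},d_{12},e_{12}$ are untouched. Second, for (13) your description of the ``rank-$\le 1$ determinantal computation'' is a gesture rather than a proof; the paper actually exhibits the five staircase Gram matrices $Q_i'$ and reads off that the only common zero in $\P^3(\C)$ is $[1{:}0{:}0{:}0]$ --- you need that level of explicitness to invoke Lemma~\ref{lemcondredprelim}(c). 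With those two points filled in, your proof is complete and is a valid alternative to the paper's.
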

\begin{proof}
  We first prove that if any of the first twelve cases holds, then the
  discriminant of $(A,B,C,D,E)$ is zero.
  Let $Q_1,\;Q_2,\;Q_3,\;Q_4$, and $Q_5$ denote the five $4\times
  4$ sub-Pfaffians of $At_1+Bt_2+Ct_3+Dt_4+Et_5$. 
In Case (1), the quadratic form corresponding to $Q_1$ is
$$-(a_{14} t_1 + b_{14} t_2 + c_{14} t_3+d_{14} t_4 + e_{14} t_5) (a_{23}x_1 +b_{23}t_2+c_{23}t_3+d_{23} t_4 + e_{23} t_5).$$
In Cases (2) and (3), the Gram matrix of $Q_1$ is of the form
$$Q_1=\left(\begin{smallmatrix}
        {0}&{0}&{0}&{0}&*\\
        {0}&{0}&{0}&{0}&*\\
        {0}&{0}&{0}&{0}&*\\
        {0}&{0}&{0}&{0}&*\\
        {*}&{*}&{*}&{*}&*\end{smallmatrix}\right).$$
Thus, in the first three cases the quadratic form corresponding to
  $Q_1$ factors into two linear factors, and hence by Lemma~\ref{lemcondredprelim}(b) 
 we have that $\Delta(A,B,C,D,E)=0$. 

  In Cases (4) and (5), the $t_1^2$-, $t_1t_2$-, and $t_2^2$- coefficients of
  all the $Q_i$'s are equal to zero, and so the Gram matrices of the $Q_i'$'s take the form
$$
[Q_1,Q_2,Q_3,Q_4,Q_5]=\left[\left(\begin{smallmatrix}
        {0}&{0}&{*}&{*}&*\\
        {0}&{0}&{*}&{*}&*\\
        {*}&{*}&{*}&{*}&*\\
        {*}&{*}&{*}&{*}&*\\
        {*}&{*}&{*}&{*}&*\end{smallmatrix}\right),\left(\begin{smallmatrix}
        {0}&{0}&{*}&{*}&*\\
        {0}&{0}&{*}&{*}&*\\
        {*}&{*}&{*}&{*}&*\\
        {*}&{*}&{*}&{*}&*\\
        {*}&{*}&{*}&{*}&*\end{smallmatrix}\right),\left(\begin{smallmatrix}
        {0}&{0}&{*}&{*}&*\\
        {0}&{0}&{*}&{*}&*\\
        {*}&{*}&{*}&{*}&*\\
        {*}&{*}&{*}&{*}&*\\
        {*}&{*}&{*}&{*}&*\end{smallmatrix}\right),\left(\begin{smallmatrix}
        {0}&{0}&{*}&{*}&*\\
        {0}&{0}&{*}&{*}&*\\
        {*}&{*}&{*}&{*}&*\\
        {*}&{*}&{*}&{*}&*\\
        {*}&{*}&{*}&{*}&*\end{smallmatrix}\right),\left(\begin{smallmatrix}
        {0}&{0}&{*}&{*}&*\\
        {0}&{0}&{*}&{*}&*\\
        {*}&{*}&{*}&{*}&*\\
        {*}&{*}&{*}&{*}&*\\
        {*}&{*}&{*}&{*}&*\end{smallmatrix}\right)\right].
$$
This implies that the curve $C_v$ defined by the vanishing of the $Q_i$ is not
  a smooth genus one curve, as it contains a projective line in $\P^4$.  Meanwhile, in Case (6) we may further
  assume (replacing $(A,B,C,D,E)$ with a $G_\C$-translate, if
  necessary) that $a_{34}=b_{34}=0$. Now the $t_1^2$-, $t_1t_2$-, and $t_2^2$-
  coefficients of all the $Q_i$'s are again equal to zero, as before.  Thus,
  Lemma~\ref{lemcondredprelim}(a) implies that the discriminant of
  $(A,B,C,D,E)$ is $0$ in these three cases.

  To prove Cases (7) through (12), 
  note that
  the discriminant of $(A,B,C,D,E)$ is a degree~$60$ polynomial in the
  $a_{ij}$'s, $b_{ij}$'s, $c_{ij}$'s, $d_{ij}$'s, and $e_{ij}$'s. Let
  $m$ be any monomial summand of the discriminant polynomial. We
  define $a(m)$ to be the number of factors $a_{ij}$
  ($i,j\in\{1,2,3,4,5\}$, $i>j$) that occur in $m$, and $1(m)$ to be
  the number of factors $x_{1j}$ ($x\in\{a,b,c,d,e\}$,
  $j\in\{2,3,4,5\}$) that occur in $m$, counted with multiplicity. We
  similarly define $b(m)$, $c(m)$, $d(m)$, $e(m)$, $2(m)$, $3(m)$,
  $4(m)$, and $5(m)$.  Classical invariant theory implies that
  \begin{equation}\label{eqdisccond}
    \begin{array}{ccccccccccc}
      a(m)\!\!\!&=&\!\!\!b(m)\!\!\!&=&\!\!\!c(m)\!\!\!&=&\!\!\!d(m)\!\!\!&=&\!\!\!e(m)\!\!\!&=&\!\!\!12,\\
      1(m)\!\!\!&=&\!\!\!2(m)\!\!\!&=&\!\!\!3(m)\!\!\!&=&\!\!\!4(m)\!\!\!&=&\!\!\!5(m)\!\!\!&=&\!\!\!24.
    \end{array}
  \end{equation}

  From these observations we see that an element in $V_\Z$ having all
  the $a_{ij}$ equal to zero has discriminant zero, since every
  monomial term in the discriminant polynomial has some $a_{ij}$ as a
  factor. Thus, Case (7) follows.

  Now suppose that there exists some element satisfying the condition
  of Case (8) and having nonzero discriminant. Then the discriminant
  polynomial must have a monomial summand~$m$ with no factor of the
  form $a_{1j}$, $b_{1j}$, or $c_{1j}$.  Then (\ref{eqdisccond})
  implies that every factor $d_{ij}$ or $e_{ij}$ of this summand~$m$
  satisfies $i=1$.  We claim that Case~(5) shows that this is
  impossible. Indeed, the truth of Case~(5) implies that the discriminant
  polynomial cannot have a nonzero monomial summand in which every factor $a_{ij}$ or
  $b_{ij}$ has $j=5$. The same argument then applies with $a$, $b$, and
  $5$ replaced by $d$, $e$, and~$1$, respectively. Thus, Case (8)
  follows.

  In Case (9), a nonzero monomial summand $m$ of the discriminant
  polynomial cannot have a factor of the form $b_{34}$, $b_{35}$, $b_{45}$,
  $c_{34}$, $c_{35}$, $c_{45}$, $d_{34}$, $d_{35}$, $d_{45}$,
  $e_{34}$, $e_{35}$, or $e_{45}$. (Otherwise $1(m)+2(m)$ would be
  strictly smaller than $48$, contradicting \eqref{eqdisccond}.)  Case~(9) now follows from Case (3) just as Case~(8) followed from Case~(5). 
  Case~(10) follows immediately from Case~(9) since any element
  satisfying the conditions of Case~(10) is $G_\Z$-equivalent to one
  satisfying the conditions of Case~(9).

  We turn to Case (11). A nonzero monomial summand $m$ of the
  discriminant polynomial cannot have a factor of the form $d_{i4}$,
  $e_{i4}$, $d_{i5}$, or $e_{i5}$ (otherwise, $4(m)+5(m)$ would be
  greater than $48$). Case~(4) shows that no such summand exists.
  Case~(12) follows from Case~(11) just as Case~(10) followed from
  Case~(9).

  Finally, in Case (13), the Gram matrices of the $Q_i$'s have the following form:
$$[Q_1,Q_2,Q_3,Q_4,Q_5]=\left[\left(\begin{smallmatrix}
        {0}&{0}&{0}&{0}&0\\
        {0}&{0}&{0}&{0}&*\\
        {0}&{0}&{0}&{0}&*\\
        {0}&{0}&{0}&{*}&*\\
        {0}&{*}&{*}&{*}&*\end{smallmatrix}\right),\left(\begin{smallmatrix}
        {0}&{0}&{0}&{0}&*\\
        {0}&{0}&{0}&{0}&*\\
        {0}&{0}&{0}&{*}&*\\
        {0}&{0}&{*}&{*}&*\\
        {*}&{*}&{*}&{*}&*\end{smallmatrix}\right),\left(\begin{smallmatrix}
        {0}&{0}&{0}&{0}&*\\
        {0}&{0}&{0}&{*}&*\\
        {0}&{0}&{*}&{*}&*\\
        {0}&{*}&{*}&{*}&*\\
        {*}&{*}&{*}&{*}&*\end{smallmatrix}\right),\left(\begin{smallmatrix}
        {0}&{0}&{0}&{*}&*\\
        {0}&{0}&{*}&{*}&*\\
        {0}&{*}&{*}&{*}&*\\
        {*}&{*}&{*}&{*}&*\\
        {*}&{*}&{*}&{*}&*\end{smallmatrix}\right),\left(\begin{smallmatrix}
        {0}&{0}&{*}&{*}&*\\
        {0}&{*}&{*}&{*}&*\\
        {*}&{*}&{*}&{*}&*\\
        {*}&{*}&{*}&{*}&*\\
        {*}&{*}&{*}&{*}&*\end{smallmatrix}\right)\right].$$
It follows that if the element $v=(A,B,C,D,E)$ has nonzero
  discriminant, then the only possible point of intersection of the
  quadrics cut out by the $Q_i$'s in the hyperplane section $t_5=0$ is at the point $[1:0:0:0]\in \P^3(\C)$. 
	  By Lemma \ref{lemcondredprelim}(c), $v$ corresponds to the identity element in the Selmer
  group of the Jacobian of $C_v$, and so is not strongly irreducible.
\end{proof}

We are now ready to prove Proposition \ref{propaneq0}:
\vspace{2 ex}

\noindent{\bf Proof of Proposition \ref{propaneq0}:} 
Consider the set 
$$\var:=\{ x_{ij}:x\in\{a,b,c,d,e\}, i,j\in\{1,2,3,4,5\},i<j\}.$$
We consider each $u\in\var$ as a $\Z$-valued (resp.\ $\R$-valued)
function on $V_\Z$ (resp.\ $V_\R$) in the obvious way.
Each variable $u\in\var$ has a weight $w(u)$ defined by
$a(s_1,\ldots,s_8)\cdot u = w(u)u$.  The weight $w(u)$ is evidently a
rational function in $s_1,\ldots,s_8$.  We may define a natural partial
order $\lesssim$ on $\var$, where $x_{ij}\lesssim y_{k\ell}$ if $x$ is
either lexicographically ahead of \,or is the same as $y$, $\,i\leq
k$, and $j\leq \ell$. Note that for any $u_1,u_2\in\var$, we have
$u_1\lesssim u_2$ if and only if the exponent of every $s_i$ in
$w(u_1)$ is less than or equal to the corresponding exponent in
$w(u_2)$.

Now given a subset $\ZZ\subset \var$, we define the set $V_\Z(\ZZ)\subset V_\Z$ to
be the set of all elements $v\in V_\Z$ such that $u(v)=0$ for each
$u\in \ZZ$ and $u(v)\neq 0$ for each $u\in \NN=\NN(\ZZ)$, where $\NN(\ZZ)$ is the
set of minimal elements (under the above partial ordering) in
$\var\backslash \ZZ$.
Identically to the proof of \cite[Lemma 11]{dodpf}, we partition $V_\Z$ into disjoint subsets of $V_\Z$ by
the following process.  Start with $S=V_\Z(\{a_{12}\})$. (Thus, $S$ is the
set of all $(A,B,C,D,E)\in V_\Z$ with $a_{12}=0$ and
$a_{13},b_{12}\neq 0$.) At every step of the process, for each set
$V_\Z(\ZZ)$ generated in the previous step and each $u\in \NN$, we
add the set $V_\Z(\ZZ\cup \{u\})$ to our list of subsets,
provided that $\ZZ\cup \{u\}$ does not contain any of the thirteen sets listed in
Proposition~\ref{propreducibility}.

Let $\SS$ denote the set of all subsets of $\var$ generated by the above process.
It is clear that every strongly irreducible element with
$a_{12}=0$ is contained in exactly one of the sets in $\SS$. Thus to prove
Proposition \ref{propaneq0}, it suffices to prove the estimate
$N(S;X)=O(X^{499/600})$ for each $S\in\SS$.
Equation (\ref{eqavgfinal}) implies that given a fixed set $S=V_\Z(\ZZ)\in \SS$, we have the estimate
$$
N(V_\Z(\ZZ);X)=O\left(\int_{s_1,\ldots,s_8=c}^\infty\sigma(\ZZ,a)d^\ast a\right),
$$
where $d^\ast a$ is given by (\ref{dadn}) and $\sigma(\ZZ,a)$ is the number of integer points in the region 
$$B^\pm(0,a;X;\ZZ):=\{v\in B^\pm(0,a;X):u(v)=0\mbox{ for }u\in \ZZ,\mbox{ and }|u(v)|\geq 1\mbox{ for }u\in \NN\}.$$
An element $v\in B^\pm(0,a;X;\ZZ)$ satisfies $u(v)\ll X^{1/60}w(u)$ for
each $u\in \var$, and therefore $\sigma(\ZZ,a)$ is nonzero only if
$X^{1/60}w(u)\gg 1$ for each $u\in \NN$. Since $\NN$ was chosen to be the set of minimal elements in $\var\backslash \ZZ$, it follows that $\sigma(\ZZ,a)$ is nonzero only if
$X^{1/60}w(u)\gg 1$ for every $u\in \var\backslash \ZZ$.

If we define the weight $w(u_1^{r_1}\cdots u_k^{r_k})$ to be
$w(u_1)^{r_1}\cdots w(u_k)^{r_k}$ for $u_1,\ldots,u_k\in\var$ and
$r_1,\ldots,r_k\in\R$, then we have the estimate
\begin{equation}\label{estvzzz}
  \begin{array}{rcl}
N(V_\Z(\ZZ);X)&\!\!\!\!=\!\!\!\!&O\left(\displaystyle\int_{s_1,\ldots,s_8=c}^\infty X^{\textstyle\frac{50-\# \ZZ}{60}}\cdot w\left(\displaystyle\prod_{u\in\var\backslash \ZZ}u\right)d^\ast a\right)\\[0.3in]
&\!\!\!\!=\!\!\!\!&O\left(\displaystyle\int_{s_1,\ldots,s_8=c}^\infty X^{\textstyle\frac{50-\# \ZZ+\rm{deg}(\pi_\ZZ)}{60}}\cdot w\left(\pi_\ZZ\cdot\displaystyle\prod_{u\in\var\backslash \ZZ}u\right)d^\ast a\right),
  \end{array}
\end{equation}
for any $\pi_\ZZ=\prod u_i^{r_i}$ such that all the $u_i$'s are in
$\var\backslash \ZZ$ and all the $r_i$'s are positive real numbers.
The first equality in \eqref{estvzzz} follows by applying
Proposition \ref{davlem} on the set $B^\pm(0,a;X;\ZZ)$, along with the
fact that $\sigma(\ZZ,a)$ is nonzero only if $X^{1/60}w(v)\gg 1$ for
each $u\in \var\backslash \ZZ$. The second equality also follows directly 
from the latter fact. Therefore, if we find $\pi_\ZZ$ as above such that
the exponent of each $s_i$ in
\begin{equation}\label{eqpifactor}
s_1^{-20}s_2^{-30}s_3^{-30}s_4^{-20}s_5^{-20}s_6^{-30}s_7^{-30}s_8^{-20}\cdot
w\Bigl(\pi_\ZZ\cdot\prod_{u\in\var\backslash \ZZ}u\Bigr)
\end{equation}
is negative, then we may conclude that
\begin{equation}\label{estvzzzsec}
N(V_\Z(\ZZ);X)=O\Bigl(X^{\textstyle{\frac{50-\#\ZZ+\rm{deg}(\pi_\ZZ)}{60}}}\Bigr).
\end{equation}

We now describe how, if we have a suitable factor $\pi_\ZZ$ for the
set $\ZZ$ in (\ref{estvzzz})--(\ref{estvzzzsec}), 
then we can obtain the estimate on the right hand side of
\eqref{estvzzzsec} also for $N(V_\Z(\ZZ');X)$ for all 
subsets $\ZZ'\subset \ZZ$.  Indeed, if we can construct
products $\pi_{\ZZ,u}$ of nonnegative powers of elements in
$\var\backslash \ZZ$, for each $u\in \ZZ\backslash\{a_{12}\}$, such
that the exponent of every $s_i$ in $w({u}/{\pi_{\ZZ,u}})$ is negative
and $\pi_\ZZ/\prod_{u\in\ZZ\backslash\{a_{12}\}}\pi_{\ZZ,u}$ is also a
product of nonnegative powers of elements in $\var\backslash \ZZ$,
then we may similarly conclude that
$$N(V_\Z(\ZZ');X)=O\Bigl(X^{\textstyle{\frac{50-\#\ZZ+\rm{deg}(\pi_\ZZ)}{60}}}\Bigr)$$ 
for every subset $\ZZ'$ of $\ZZ$ with $a_{12}\in \ZZ'$. 
This follows by using
$\pi_{\ZZ'}:={\pi_\ZZ}/{\displaystyle\prod_{u\in \ZZ\backslash
    \ZZ'}\pi_{\ZZ,u}}$ and $\ZZ'$ in~place~of $\pi_\ZZ$ and $\ZZ$,
respectively, in \eqref{estvzzz}.

We now have the following lemma which gives a list of sets $\ZZ$ such that every element of $\SS$ is contained in at least one such $\ZZ$. 
\begin{lemma}\label{lemsetsz}
  Let $V_\Z(\ZZ)$ be an element of $\SS$. Then $\ZZ$ is contained in one of the following sets:
  \begin{itemize}
  \item[{\rm (1)}] 
$\{a_{12},a_{13},a_{14},a_{15},a_{23},a_{24},a_{25},a_{34}\}\cup
\{b_{12},b_{13},b_{14},b_{15},b_{23},b_{24}\}
\cup\{c_{12},c_{13},c_{14},c_{23}\}\cup\{d_{12}\}$
  \item[{\rm (2)}] 
$\{a_{12},a_{13},a_{14},a_{15},a_{23},a_{24},a_{25},a_{34}\}\cup
\{b_{12},b_{13},b_{14},b_{15},b_{23},b_{24}\}
\cup\{c_{12},c_{13},c_{14}\}\cup\{d_{12},d_{13}\}$
  \item[{\rm (3)}] 
$\{a_{12},a_{13},a_{14},a_{15},a_{23},a_{24},a_{25},a_{34}\}\cup
\{b_{12},b_{13},b_{14},b_{15},b_{23},b_{24}\}
\cup\{c_{12},c_{13},c_{23}\}\cup\{d_{12},d_{13}\}$
  \item[{\rm (4)}] 
$\{a_{12},a_{13},a_{14},a_{15},a_{23},a_{24},a_{25},a_{34}\}\cup
\{b_{12},b_{13},b_{14},b_{15},b_{23}\}
\cup\{c_{12},c_{13},c_{14},c_{23}\}\cup\{d_{12},d_{13}\}$
  \item[{\rm (5)}] 
$\{a_{12},a_{13},a_{14},a_{15},a_{23},a_{24},a_{25},a_{34}\}\cup
\{b_{12},b_{13},b_{14},b_{23},b_{24}\}
\cup\{c_{12},c_{13},c_{14},c_{23}\}\cup\{d_{12},d_{13}\}$
  \item[{\rm (6)}] 
$\{a_{12},a_{13},a_{14},a_{15},a_{23},a_{24},a_{25}\}\cup
\{b_{12},b_{13},b_{14},b_{15},b_{23},b_{24}\}
\cup\{c_{12},c_{13},c_{14},c_{23}\}\cup\{d_{12},d_{13}\}$
  \item[{\rm (7)}] 
$\{a_{12},a_{13},a_{14},a_{15},a_{23},a_{24},a_{34}\}\cup
\{b_{12},b_{13},b_{14},b_{15},b_{23},b_{24}\}
\cup\{c_{12},c_{13},c_{14},c_{23}\}\cup\{d_{12},d_{13}\}$

  \item[{\rm (8)}] 
$\{a_{12},a_{13},a_{14},a_{15},a_{23},a_{24},a_{34}\}\cup
\{b_{12},b_{13},b_{14},b_{15},b_{23}\}
\cup\{c_{12},c_{13},c_{14},c_{23}\}\cup\{d_{12},d_{13}\}\cup\{e_{12}\}$
  \item[{\rm (9)}] 
$\{a_{12},a_{13},a_{14},a_{15},a_{23},a_{24},a_{34}\}\cup
\{b_{12},b_{13},b_{14},b_{23},b_{24}\}
\cup\{c_{12},c_{13},c_{14},c_{23}\}\cup\{d_{12},d_{13}\}\cup\{e_{12}\}$
  \item[{\rm (10)}] 
$\{a_{12},a_{13},a_{14},a_{15},a_{23},a_{24},a_{25},a_{34},a_{35}\}\cup
\{b_{12},b_{13},b_{14},b_{15},b_{23},b_{24}\}
\cup\{c_{12},c_{13},c_{14}\}\cup\{d_{12}\}$
  \end{itemize}
\end{lemma}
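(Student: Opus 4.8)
The plan is to translate Lemma~\ref{lemsetsz} into a purely combinatorial statement about order ideals in the finite poset $(\var,\lesssim)$ and then settle it by a short, entirely elementary case analysis. By construction, $\SS$ consists of the sets $V_\Z(\ZZ)$ obtained from $\{a_{12}\}$ by repeatedly adjoining a minimal element of the current complement; in particular each such $\ZZ$ is an order ideal (downward closed subset) of $(\var,\lesssim)$ containing $a_{12}$. Conversely, any order ideal $\ZZ\ni a_{12}$ can be built up one minimal element at a time, and such a build never meets a forbidden configuration precisely when $\ZZ$ contains none of the thirteen sets of Proposition~\ref{propreducibility} (a larger set is refused only once it already contains one of them, and $\ZZ$ contains one iff some intermediate set does). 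Moreover, each of those thirteen sets is itself an order ideal, namely the downward closure of its antichain of $\lesssim$-maximal elements: for instance $(1)$ is generated by $e_{13}$, $(6)$ by $c_{24}$, $(7)$ by $a_{45}$, $(9)$ by $\{a_{25},e_{12}\}$, and $(13)$ by $\{a_{34},a_{25},b_{15},b_{24},c_{14},c_{23},d_{13}\}$. Writing $F_1,\dots,F_{13}$ for these ideals, we obtain $\SS=\{V_\Z(\ZZ):\ZZ\text{ an order ideal},\ a_{12}\in\ZZ,\ F_k\not\subseteq\ZZ\text{ for all }k\}$, so the lemma reduces to the assertion that every such $\ZZ$ is contained in one of the ten displayed sets, which I will call $W_1,\dots,W_{10}$.

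Next I would set up coordinates. An order ideal $\ZZ$ is determined by its five slices $\ZZ_x:=\{(i,j):x_{ij}\in\ZZ\}$, $x\in\{a,b,c,d,e\}$, each an order ideal of the poset $\Pi$ of pairs $(i,j)$, $1\le i<j\le5$, under the componentwise order, and the letter-ordering in $\lesssim$ forces $\ZZ_a\supseteq\ZZ_b\supseteq\ZZ_c\supseteq\ZZ_d\supseteq\ZZ_e$. Since $F_k$ is the downward closure of an antichain, $F_k\not\subseteq\ZZ$ simply says $\ZZ$ omits one of the corresponding generators; for the eight single-generator sets $(1)$--$(8)$, together with the fact that each $\ZZ_x$ is an order ideal of $\Pi$, this yields the a priori bounds
\[
\ZZ_e\subseteq\{12\},\quad \ZZ_d\subseteq\{12,13\},\quad \ZZ_c\subseteq\{12,13,14,23\},\quad \ZZ_b\subseteq\{12,13,14,15,23,24\},\quad \ZZ_a\subseteq\Pi\setminus\{45\},
\]
while $(9)$--$(13)$ become coupling conditions across slices: e.g.\ $(9)$ forbids $(2,5)\in\ZZ_a$ together with $(1,2)\in\ZZ_e$; $(11)$ forbids $(3,5)\in\ZZ_a$ together with $(2,3)\in\ZZ_c$; and $(13)$ forbids the simultaneous memberships $(3,4),(2,5)\in\ZZ_a$, $(1,5),(2,4)\in\ZZ_b$, $(1,4),(2,3)\in\ZZ_c$, $(1,3)\in\ZZ_d$. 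A direct check confirms that $W_1,\dots,W_{10}$ themselves satisfy all thirteen conditions, so they lie in this family.

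Since every condition above is inherited by sub-order-ideals, it then suffices to enumerate the maximal admissible $\ZZ$ and check that each is contained in one of the $W_i$. I would branch first on whether $(1,2)\in\ZZ_e$ and then on which of $(3,5),(3,4),(2,5)$ lie in $\ZZ_a$ --- these being precisely the memberships that activate the couplings $(9)$--$(13)$ --- and in each branch use the nesting $\ZZ_a\supseteq\ZZ_b\supseteq\ZZ_c\supseteq\ZZ_d\supseteq\ZZ_e$ together with the remaining couplings to pin down the admissible maximal $\ZZ_b,\ZZ_c,\ZZ_d$, reading off each time that the resulting $\ZZ$ sits inside one of the ten $W_i$. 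Every implication used is immediate; the one real obstacle is to organize this branching cleanly so that it visibly terminates in exactly the ten cases recorded in the statement, which is entirely elementary but requires care. Carrying out this enumeration completes the proof.
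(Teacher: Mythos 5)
Your proposal is correct and amounts to a careful, honest spelling-out of what the paper dismisses with a single line (``Lemma~\ref{lemsetsz} follows immediately from Proposition~\ref{propreducibility}''), so the approach is essentially the same as the paper's: a direct combinatorial verification. The observation that $\SS$ consists precisely of the order ideals of $(\var,\lesssim)$ containing $a_{12}$ that omit each of the thirteen sets, and that each of those thirteen sets is itself an order ideal generated by a short antichain, is exactly the right organizing device and makes the branching mechanical; I checked that the resulting enumeration does close up in the ten listed sets (branch on $a_{35}\in\ZZ$ first, giving $W_{10}$, then on $e_{12}\in\ZZ$ with $F_9,F_{10}$, giving $W_8,W_9$, and finally on which generator of $F_{13}$ is omitted, giving $W_1$--$W_7$). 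The one thing your write-up leaves undone is actually running that branching, but the setup is sound and the gap is one of exposition rather than mathematics.
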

Lemma~\ref{lemsetsz} follows immediately from Proposition \ref{propreducibility}.

\begin{table}[t]
  \centering \renewcommand{\arraystretch}{1.2}
  \begin{tabular}{|c |c |c |}
\hline
Case&$\pi_\ZZ$&$\#\ZZ-\deg\pi_\ZZ$\\
\hline
1&$a_{35}^{4.12}b_{25}^{1.42}b_{34}^{2.56}c_{15}^{1.28}c_{24}^{2.7}d_{13}^{2.98}e_{12}^{3.84}$&$.1$\\[.05in]
\hline
2&$a_{35}^{4.12}b_{25}^{1.42}b_{34}^{2.56}c_{15}^{1.28}c_{23}^{1.7}d_{14}^{3.98}e_{12}^{3.84}$&$.1$\\[.05in]
\hline
3&$a_{35}^{4.34}b_{25}^{2.17}b_{34}^{1.78}c_{14}^{4.73}d_{23}^{2.17}e_{12}^{3.56}$&$.25$\\[.05in]
\hline
4&$a_{35}^{4.12}b_{24}^{4.28}c_{15}^{2.64}d_{14}^{2.64}d_{23}^{1.32}e_{12}^{3.8}$&$.1$\\[.05in]
\hline
5&$a_{35}^{4.12}b_{15}^{1.64}b_{34}^{1.32}c_{24}^{3.96}d_{14}^{2.64}d_{23}^{1.32}e_{12}^{3.8}$&$.2$\\[.05in]
\hline
6&$a_{25}^{3.12}b_{34}^{3.96}c_{15}^{2.64}c_{24}^{1.32}d_{14}^{2.64}d_{23}^{1.32}e_{12}^{3.8}$&$.2$\\[.05in]
\hline
7&$a_{34}^{4.16}b_{25}^{3.16}c_{15}^{3.44}c_{24}^{.54}d_{14}^{2.26}d_{23}^{1.72}e_{12}^{3.62}$&$.1$\\[.05in]
\hline
8&$a_{25}^{3.12}b_{15}^{1.64}b_{34}^{1.32}c_{24}^{3.96}d_{14}^{2.64}d_{23}^{1.32}e_{13}^{4.8}$&$.2$\\[.05in]
\hline
9&$a_{25}^{1.32}a_{35}^{1.8}b_{24}^{4.28}c_{15}^{2.64}d_{14}^{2.64}d_{23}^{1.32}e_{13}^{4.8}$&$.2$\\[.05in]
\hline
10&$a_{45}^{5.05}b_{25}^{1.7}b_{34}^{2.28}c_{15}^{1.14}c_{23}^{1.84}d_{13}^{3.26}e_{12}^{3.63}$&$.1$\\[.05in]
\hline
  \end{tabular}
\caption{The factors $\pi_\ZZ$ used in the proof of Proposition~\ref{propaneq0}.}
\label{tablecusp1}
\end{table}

\begin{table}[htbp]
  \centering\renewcommand{\arraystretch}{.95}
  \begin{tabular}{|l |l |l |l |l|}
\hline
Case 1&Case 2&Case 3&Case 4&Case 5\\
\hline
$\pi_{a_{13}}=a_{35}^{.02}d_{13}^{.98}$\phantom{$4^{4^4}$\!\!\!\!\!\!\!\!\!\!\!\!\!}
&$\pi_{a_{13}}=c_{23}^{.7}c_{15}^{.28}a_{35}^{.02}$&$\pi_{a_{13}}=c_{14}^{.73}a_{35}^{.27}$&$\pi_{a_{13}}=d_{23}^{.32}d_{14}^{.64}a_{35}^{.04}$\!\!&$\pi_{a_{13}}=a_{35}^{.12}d_{23}^{.32}b_{15}^{.56}$\!\!\\[.05in]

$\pi_{a_{14}}=c_{15}^{.28}b_{34}^{.56}b_{25}^{.16}$&$\pi_{a_{14}}=b_{34}^{.56}b_{25}^{.4}a_{35}^{.04}$&$\pi_{a_{14}}=c_{14}$&$\pi_{a_{14}}=c_{15}^{.64}b_{24}^{.28}a_{35}^{.08}$&$\pi_{a_{14}}=b_{34}^{.32}d_{14}^{.64}b_{15}^{.04}$\\[.05in]

$\pi_{a_{15}}=a_{35}$&$\pi_{a_{15}}=b_{25}$&$\pi_{a_{15}}=a_{35}$&$\pi_{a_{15}}=a_{35}$&$\pi_{a_{15}}=a_{35}$\\[.05in]

$\pi_{a_{23}}=c_{24}^{.7}b_{25}^{.26}a_{35}^{.04}$&$\pi_{a_{23}}=a_{35}$&$\pi_{a_{23}}=b_{25}^{.05}b_{34}^{.78}d_{23}^{.17}$\!\!&$\pi_{a_{23}}=a_{35}$&$\pi_{a_{23}}=a_{35}$\\[.05in]

$\pi_{a_{24}}=a_{35}$&$\pi_{a_{24}}=a_{35}$&$\pi_{a_{24}}=a_{35}$&$\pi_{a_{24}}=b_{24}$&$\pi_{a_{24}}=a_{35}$\\[.05in]

$\pi_{a_{25}}=a_{35}$&$\pi_{a_{25}}=a_{35}$&$\pi_{a_{25}}=a_{35}$&$\pi_{a_{25}}=a_{35}$&$\pi_{a_{25}}=a_{35}$\\[.05in]

$\pi_{a_{34}}=a_{35}$&$\pi_{a_{34}}=a_{35}$&$\pi_{a_{34}}=a_{35}$&$\pi_{a_{34}}=a_{35}$&$\pi_{a_{34}}=b_{34}$\\[.05in]

$\pi_{b_{12}}=e_{12}$&$\pi_{b_{12}}=e_{12}$&$\pi_{b_{12}}=e_{12}$&$\pi_{b_{12}}=e_{12}$&$\pi_{b_{12}}=e_{12}$\\[.05in]

$\pi_{b_{13}}=d_{13}$&$\pi_{b_{13}}=d_{14}^{.98}b_{25}^{.02}$&$\pi_{b_{13}}=c_{14}$&$\pi_{b_{13}}=b_{24}$&$\pi_{b_{13}}=c_{24}^{.96}b_{15}^{.04}$\\[.05in]

$\pi_{b_{14}}=b_{25}$&$\pi_{b_{14}}=d_{14}$&$\pi_{b_{14}}=c_{14}$&$\pi_{b_{14}}=b_{24}$&$\pi_{b_{14}}=b_{15}$\\[.05in]

$\pi_{b_{15}}=c_{15}$&$\pi_{b_{15}}=c_{15}$&$\pi_{b_{15}}=b_{25}$&$\pi_{b_{15}}=c_{15}$&$\pi_{b_{23}}=c_{24}$\\[.05in]

$\pi_{b_{23}}=b_{34}$&$\pi_{b_{23}}=b_{34}$&$\pi_{b_{23}}=b_{25}$&$\pi_{b_{23}}=b_{24}$&$\pi_{b_{24}}=c_{24}$\\[.05in]

$\pi_{b_{24}}=b_{34}$&$\pi_{b_{24}}=b_{34}$&$\pi_{b_{24}}=b_{34}$&$\pi_{c_{12}}=e_{12}$&$\pi_{c_{12}}=e_{12}$\\[.05in]

$\pi_{c_{12}}=e_{12}$&$\pi_{c_{12}}=e_{12}$&$\pi_{c_{12}}=e_{12}$&$\pi_{c_{13}}=c_{15}$&$\pi_{c_{13}}=c_{24}$\\[.05in]

$\pi_{c_{13}}=d_{13}$&$\pi_{c_{13}}=c_{23}$&$\pi_{c_{13}}=c_{14}$&$\pi_{c_{14}}=d_{14}$&$\pi_{c_{14}}=d_{14}$\\[.05in]

$\pi_{c_{14}}=c_{24}$&$\pi_{c_{14}}=d_{14}$&$\pi_{c_{23}}=d_{23}$&$\pi_{c_{23}}=d_{23}$&$\pi_{c_{23}}=d_{23}$\\[.05in]

$\pi_{c_{23}}=c_{24}$&$\pi_{d_{12}}=e_{12}$&$\pi_{d_{12}}=e_{12}$&$\pi_{d_{12}}=e_{12}$&$\pi_{d_{12}}=e_{12}$\\[.05in]

$\pi_{d_{12}}=e_{12}$&$\pi_{d_{13}}=d_{14}$&$\pi_{d_{13}}=d_{23}$&$\pi_{d_{13}}=d_{14}$&$\pi_{d_{13}}=d_{14}$\\[.05in]
\hline
\hline
Case 6&Case 7&Case 8&Case 9&Case 10\\
\hline
$\pi_{a_{13}}=a_{25}^{.08}c_{15}^{.6}c_{24}^{.32}$\phantom{$4^{4^4}$\!\!\!\!\!\!\!\!\!\!\!\!\!}
&$\pi_{a_{13}}=a_{34}$&$\pi_{a_{13}}=b_{15}^{.5}d_{14}^{.5}$&$\pi_{a_{13}}=a_{35}^{.8}a_{25}^{.2}$&$\pi_{a_{13}}=d_{13}$\\[.05in]

$\pi_{a_{14}}=d_{14}^{.64}d_{23}^{.32}a_{25}^{.04}$\!\!&$\pi_{a_{14}}=a_{34}$&$\pi_{a_{14}}=a_{25}$&$\pi_{a_{14}}=a_{25}$&$\pi_{a_{14}}=b_{25}^{.7}b_{34}^{.16}c_{15}^{.14}$\\[.05in]

$\pi_{a_{15}}=a_{25}$&$\pi_{a_{15}}=c_{15}$&$\pi_{a_{15}}=b_{15}$&$\pi_{a_{15}}=c_{15}$&$\pi_{a_{15}}=a_{45}$\\[.05in]

$\pi_{a_{23}}=a_{25}^{.7}b_{25}^{.26}a_{35}^{.04}$&$\pi_{a_{23}}=a_{34}$&$\pi_{a_{23}}=a_{25}$&$\pi_{a_{23}}=b_{24}$&$\pi_{a_{23}}=c_{23}^{.84}b_{34}^{.12}a_{45}^{.04}\!\!$\\[.05in]

$\pi_{a_{24}}=a_{25}$&$\pi_{a_{24}}=a_{34}$&$\pi_{a_{24}}=a_{25}$&$\pi_{a_{24}}=b_{24}$&$\pi_{a_{24}}=a_{45}$\\[.05in]

$\pi_{a_{34}}=b_{34}$&$\pi_{a_{25}}=b_{25}$&$\pi_{a_{34}}=b_{34}$&$\pi_{a_{34}}=a_{35}$&$\pi_{a_{25}}=a_{45}$\\[.05in]

$\pi_{b_{12}}=e_{12}$&$\pi_{b_{12}}=e_{12}$&$\pi_{b_{12}}=e_{13}^{.8}d_{23}^{.2}$&$\pi_{b_{12}}=e_{13}^{.8}b_{24}^{.2}$&$\pi_{a_{34}}=a_{45}$\\[.05in]

$\pi_{b_{13}}=b_{34}^{.96}c_{15}^{.04}$&$\pi_{b_{13}}=d_{23}^{.72}d_{14}^{.26}b_{25}^{.02}$\!\!&$\pi_{b_{13}}=c_{24}^{.9}b_{34}^{.1}$&$\pi_{b_{13}}=c_{15}^{.5}d_{14}^{.5}$&$\pi_{a_{35}}=a_{45}$\\[.05in]

$\pi_{b_{14}}=c_{24}$&$\pi_{b_{14}}=c_{24}^{.54}c_{15}^{.44}b_{25}^{.02}$&$\pi_{b_{14}}=d_{14}$&$\pi_{b_{14}}=b_{24}$&$\pi_{b_{12}}=e_{12}$\\[.05in]

$\pi_{b_{15}}=c_{15}$&$\pi_{b_{15}}=c_{15}$&$\pi_{b_{23}}=c_{24}$&$\pi_{b_{15}}=c_{15}$&$\pi_{b_{13}}=d_{13}$\\[.05in]

$\pi_{b_{23}}=c_{34}$&$\pi_{b_{23}}=b_{25}$&$\pi_{b_{24}}=c_{24}$&$\pi_{b_{23}}=b_{24}$&$\pi_{b_{14}}=b_{34}$\\[.05in]

$\pi_{b_{24}}=b_{34}$&$\pi_{b_{24}}=b_{25}$&$\pi_{c_{12}}=c_{24}$&$\pi_{c_{12}}=d_{14}$&$\pi_{b_{15}}=b_{25}$\\[.05in]

$\pi_{c_{12}}=e_{12}$&$\pi_{c_{12}}=e_{12}$&$\pi_{c_{13}}=e_{13}$&$\pi_{c_{13}}=e_{13}$&$\pi_{b_{23}}=c_{23}$\\[.05in]

$\pi_{c_{13}}=c_{15}$&$\pi_{c_{13}}=c_{15}$&$\pi_{c_{14}}=d_{14}$&$\pi_{c_{14}}=d_{14}$&$\pi_{b_{24}}=b_{34}$\\[.05in]

$\pi_{c_{14}}=d_{14}$&$\pi_{c_{14}}=d_{14}$&$\pi_{c_{23}}=d_{23}$&$\pi_{c_{23}}=d_{23}$&$\pi_{c_{12}}=e_{12}$\\[.05in]

$\pi_{c_{23}}=d_{23}$&$\pi_{c_{23}}=d_{23}$&$\pi_{d_{12}}=e_{13}$&$\pi_{d_{12}}=e_{13}$&$\pi_{c_{13}}=d_{13}$\\[.05in]

$\pi_{d_{12}}=e_{12}$&$\pi_{d_{12}}=e_{12}$&$\pi_{d_{13}}=e_{13}$&$\pi_{d_{13}}=e_{13}$&$\pi_{c_{14}}=c_{15}$\\[.05in]

$\pi_{d_{13}}=d_{14}$&$\pi_{d_{13}}=d_{14}$&$\pi_{e_{12}}=e_{13}$&$\pi_{e_{12}}=e_{13}$&$\pi_{d_{12}}=e_{12}$\\[.05in]
\hline
  \end{tabular}
\caption{The factors $\pi_u=\pi_{\ZZ,u}$ used in the proof of Proposition~\ref{propaneq0}.}
\label{tablecusp2}
\end{table}

For each set $\ZZ$ of Lemma \ref{lemsetsz}, we construct in Table
\ref{tablecusp1}, monomials $\pi_\ZZ=\prod u_i^{r_i}$ with $u_i\in
\var\backslash \ZZ$ and $r_i\geq 0$ such that the exponent of each $s_i$
in \eqref{eqpifactor} is negative. In Table \ref{tablecusp2}, for each such
set~$\ZZ$, we determine $\pi_{\ZZ,u}$'s for $u\in \ZZ\backslash\{a_{12}\}$ such that
each $\pi_{\ZZ,u}$ is a product of nonnegative powers of elements in
$\var\backslash \ZZ$, the exponent of every $s_i$ in $w(u/\pi_{\ZZ,u})$ is
negative, and $\pi_\ZZ/\prod_{u\in\ZZ\backslash\{a_{12}\}}\pi_{\ZZ,u}$ is also a product of nonnegative
powers of elements in $\var\backslash \ZZ$. We conclude that
for every $\ZZ'\subset \ZZ$, we have
$N(V_\Z(\ZZ');X)=O(X^{\frac{50-\#\ZZ+\deg(\pi_\ZZ)}{60}})=O(X^{\frac{50-.1}{60}})$. Since
we have constructed $\pi_\ZZ$ and $\pi_{\ZZ,u}$'s for each set $\ZZ$ of Lemma
\ref{lemsetsz}, Proposition \ref{propaneq0} follows. $\Box$

\vspace{.1in}

We have proven that the number of irreducible elements in the
``cuspidal region'' of the fundamental domain is negligible. The next
proposition  states that the number of reducible elements in the ``main
body'' is also negligible:
\begin{proposition}\label{propred}
Let $V_\Z^\red$ denote the set of elements in $V_\Z$ that are not strongly irreducible. Then
\begin{equation}\label{eqprop22}
\int_{na\in\FF}\#\{v\in B^\pm(n,a;X)\cap V_\Z^\red:a_{12}(v)\neq 0\}dnd^\ast a=o(X^{5/6}).
\end{equation}
\end{proposition}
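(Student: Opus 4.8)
The plan is to bound the reducible elements in the main body by a geometry-of-numbers argument entirely parallel to the treatment of the cuspidal region in Proposition~\ref{propaneq0}, but now slicing the reducible locus $V_\Z^\red$ according to which ``defining'' equations make an element fail to be strongly irreducible. Recall from Proposition~\ref{propreducibility} that an element $(A,B,C,D,E)$ is not strongly irreducible whenever all variables in one of the thirteen listed subsets of $\var$ vanish; these thirteen subsets, together with their $G_\Z$-translates, carve $V_\Z^\red$ (up to a measure-zero set of elements of zero discriminant, which are negligible) into a union of pieces, each of which is contained in the set of $v$ with a specified collection of coordinates equal to zero. Since we are integrating over $na\in\FF$ with $a_{12}(v)\neq 0$ excluded from consideration being part of the cusp, we may assume $a_{12}\neq 0$ throughout; the reducibility then forces the vanishing of coordinates \emph{other} than $a_{12}$, and by applying a suitable element of $N'$ or of the Weyl group we may arrange that the vanishing set $\ZZ$ is one of finitely many explicit ``shifted'' versions of the thirteen sets in Proposition~\ref{propreducibility}. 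For each such $\ZZ$, Equation~\eqref{eqavgfinal} combined with Proposition~\ref{davlem} gives, exactly as in \eqref{estvzzz}--\eqref{estvzzzsec},
\[
\int_{na\in\FF}\#\{v\in B^\pm(n,a;X):u(v)=0\ \forall u\in\ZZ\}\,dn\,d^\ast a
= O\!\left(X^{\frac{50-\#\ZZ+\deg(\pi_\ZZ)}{60}}\right)
\]
provided one exhibits a monomial $\pi_\ZZ$ in the coordinates outside $\ZZ$ (with nonnegative real exponents) making every exponent of $s_i$ in \eqref{eqpifactor} negative; one wants $50-\#\ZZ+\deg(\pi_\ZZ)<50$, i.e.\ $\deg(\pi_\ZZ)<\#\ZZ$, which yields $o(X^{5/6})$.

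The key steps, in order, are: (i) enumerate the finitely many vanishing sets $\ZZ$ that arise from the thirteen cases of Proposition~\ref{propreducibility} after normalizing so that $a_{12}\notin\ZZ$ (using the $G_\Z$-action and the Weyl group of $G$ to move a nonvanishing coordinate into the $a_{12}$-slot); (ii) for each such $\ZZ$, write down the weights $w(u)$ for $u\in\var\setminus\ZZ$ as monomials in $s_1,\dots,s_8$, exactly as in the proof of Proposition~\ref{propaneq0}; (iii) for each $\ZZ$, solve the (small, explicit) linear-programming feasibility problem of finding nonnegative exponents $r_i$ so that $s_1^{-20}s_2^{-30}\cdots s_8^{-20}\cdot w(\pi_\ZZ\prod_{u\notin\ZZ}u)$ has all $s_i$-exponents negative while $\deg(\pi_\ZZ)<\#\ZZ$; (iv) conclude $N(V_\Z(\ZZ);X)=o(X^{5/6})$ for each such $\ZZ$, and sum over the finitely many $\ZZ$. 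As in Proposition~\ref{propaneq0}, one can (and should) invoke the trick following \eqref{estvzzzsec}: it suffices to treat the \emph{maximal} vanishing sets $\ZZ$, since a suitable $\pi_\ZZ$ together with auxiliary monomials $\pi_{\ZZ,u}$ propagates the bound to all $\ZZ'\subset\ZZ$; this keeps the case list to a manageable size comparable to Tables~\ref{tablecusp1}--\ref{tablecusp2}.

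The main obstacle is step (iii): verifying that a feasible $\pi_\ZZ$ with $\deg(\pi_\ZZ)<\#\ZZ$ actually exists for every one of the relevant maximal sets $\ZZ$. Because $a_{12}$ is now \emph{excluded} from $\ZZ$, the torus directions one can exploit are slightly different from those available in the cuspidal estimate, so the monomials $\pi_\ZZ$ must be chosen afresh; it is conceivable that for some $\ZZ$ no strict inequality $\deg(\pi_\ZZ)<\#\ZZ$ is attainable, in which case that $\ZZ$ would have to be subdivided further (e.g.\ by splitting off an additional coordinate, or by incorporating congruence conditions) until every resulting piece admits a valid $\pi_\ZZ$. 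However, the condition $a_{12}\neq 0$ is precisely what removes the most divergent direction in the torus, so heuristically every reducible piece in the main body is genuinely lower-order; the bookkeeping, not the existence of the weights, is the real work. Once all pieces are handled, \eqref{eqprop22} follows by summing the finitely many $o(X^{5/6})$ contributions, and together with Lemma~\ref{lemcondredprelim} (which guarantees that elements of zero discriminant form a negligible set) this completes the proof of Proposition~\ref{propred}. \hfill $\Box$
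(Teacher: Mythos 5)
The approach you propose is fundamentally different from the paper's, and it contains a genuine gap that cannot be patched by the bookkeeping you describe. Your argument rests on the claim that elements of $V_\Z^\red$ (after a $G_\Z$-translate putting $a_{12}$ outside the vanishing set) are carved into pieces on which some explicit collection of coordinates vanishes, so that Proposition~\ref{propreducibility} plus the $\pi_\ZZ$-machinery applies. But Proposition~\ref{propreducibility} only gives \emph{sufficient} conditions for failure of strong irreducibility; it does not characterize $V_\Z^\red$. An integral element with $a_{12}(v)\neq 0$ and, in fact, with no coordinates vanishing at all can still have discriminant zero, or have nonzero discriminant and yet correspond to the identity class in the $5$-Selmer group of its Jacobian. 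Reducibility in the main body is an \emph{arithmetic} condition (detectable mod $p$), not a linear one (vanishing of coordinates); a positive-density subset of $V_{\F_p}$ corresponds to the identity in $E(\F_p)/5E(\F_p)$, so after reducing mod $p$ the reducible points do not concentrate on any proper coordinate subspace. Consequently, the Davenport-type estimate in your step (iii) has nothing to grab onto in the main body, and the slicing-by-vanishing-sets strategy simply does not reach the reducible locus there.

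The paper instead proves Proposition~\ref{propred} by a sieve. If $v\in V_\Z$ corresponds to the identity in $S_5(E^{I(v),J(v)})$, then for every prime $p>5$ the reduction $\bar v\in V_{\F_p}$ corresponds to the identity in $E(\F_p)/5E(\F_p)$; similarly, zero discriminant mod $p$ is a mod-$p$ condition. Using Deuring's theorem to produce an elliptic curve over $\F_p$ with $5\mid\#E(\F_p)$, one shows that the set $V_{\F_p}^{\neq\mathrm{id}}$ of $\bar v$ mapping to a \emph{non}-identity class has density $\gg 1/p$ (uniformly in $p$), since it is stable under the $G_{\F_p}$-action and scaling. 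Hence the reducible locus in the main body has density at most $\prod_{p<Y}(1-c/p)$ for every $Y$, and letting $Y\to\infty$ gives the required $o(X^{5/6})$. This is a local-density sieve, orthogonal in spirit to the cusp estimate of Proposition~\ref{propaneq0}. Your citation of Lemma~\ref{lemcondredprelim} as ``guaranteeing that elements of zero discriminant form a negligible set'' is also misdirected: that lemma gives sufficient criteria for zero discriminant or identity Selmer class, not a density statement. I would recommend abandoning the geometry-of-numbers slicing for this proposition and looking for the mod-$p$ congruence characterization of reducibility.
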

We defer the proof of Proposition \ref{propred} to \S\ref{s36}.

Therefore, in order to estimate $N(V_\Z;X)$, it suffices to count the
number of (not necessarily strongly irreducible) integral points in
the main body of the fundamental domain. We do this in the following
proposition:
\begin{proposition}\label{propmainterm}
We have
\begin{equation*}
\displaystyle\frac{1}{C_{G_0}}\int_{na\in\FF}\#\{B^\pm(n,a;X)\cap V^\irr_\Z\}dnd^\ast a
=\displaystyle\frac15\Vol(\FF\cdot R^\pm(X))+o(X^{5/6}).
 \end{equation*}
\end{proposition}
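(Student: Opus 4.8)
The plan is to peel off the pieces that Propositions~\ref{propaneq0} and~\ref{propred} have already shown to be negligible, and then to evaluate what remains using Davenport's Proposition~\ref{davlem}. Splitting the set $V_\Z^\irr$ of strongly irreducible points according to whether $a_{12}$ vanishes, the points with $a_{12}=0$ contribute, by definition, $N(V_\Z^\irr(0);X)=O(X^{499/600})=o(X^{5/6})$ by Proposition~\ref{propaneq0}. For the points with $a_{12}\neq 0$, write $\{v\in V_\Z:a_{12}(v)\neq 0\}$ as the disjoint union of its strongly irreducible and its non-strongly-irreducible members; by Proposition~\ref{propred} the latter contribute $\frac{1}{C_{G_0}}\int_{na\in\FF}\#\{v\in B^\pm(n,a;X)\cap V_\Z^\red:a_{12}(v)\neq 0\}\,dn\,d^\ast a=o(X^{5/6})$. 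Thus the left-hand side of Proposition~\ref{propmainterm} equals
$$\frac{1}{C_{G_0}}\int_{na\in\FF}\#\{v\in B^\pm(n,a;X)\cap V_\Z:a_{12}(v)\neq 0\}\,dn\,d^\ast a+o(X^{5/6}),$$
and it remains to evaluate this integral.

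For fixed $(n,a)$ I would apply Proposition~\ref{davlem} to the bounded semialgebraic multiset $B^\pm(n,a;X)\cap\{a_{12}\neq 0\}$, which is cut out by a bounded number of polynomial inequalities of bounded degree and (by Lemma~\ref{lemstabsize}) has multiplicity at most $5$; since $\{a_{12}=0\}$ is Lebesgue-null this gives the integer-point count as $\Vol(B^\pm(n,a;X))+O\bigl(\overline{\Vol}(B^\pm(n,a;X))\bigr)$. Because $a_{12}$ is the minimal coordinate for the partial order of \S\ref{s33}, every $v\in B^\pm(n,a;X)$ satisfies $|a_{12}(v)|\ll X^{1/60}w(a_{12})$ with an absolute implied constant, so this count vanishes unless $(n,a)$ lies in the subregion $\FF_X:=\{na\in\FF:w(a_{12})\gg X^{-1/60}\}$. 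For the main term: since $G_\R$ acts on $V_\R$ preserving Lebesgue measure, $\Vol(B^\pm(n,a;X))=\Vol(G_0\cdot R^\pm(X))\asymp X^{5/6}$ is independent of $(n,a)$; by the standard unfolding — the Jacobian of $(g,v)\mapsto g\cdot v$ being independent of $g$ when $G_\R$ carries a Haar measure — the ratio $\Vol(U\cdot R^\pm(X))/\Vol(U)$ is the same for $U=G_0$ and $U=\FF$, which with $C_{G_0}=5\Vol(G_0)$ gives $\frac{1}{C_{G_0}}\int_{na\in\FF}\Vol(B^\pm(n,a;X))\,dn\,d^\ast a=\tfrac15\Vol(\FF\cdot R^\pm(X))$. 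Moreover the integral of $\Vol(B^\pm(n,a;X))$ over $\FF\setminus\FF_X$ is $\Vol(G_0\cdot R^\pm(X))$ times the $d^\ast a$-measure of $\FF\setminus\FF_X$, and the latter tends to $0$ as $X\to\infty$ (it is the tail of the finite integral $\int_{na\in\FF}dn\,d^\ast a$), so this part is $o(X^{5/6})$. Hence the restricted main-term integral is $\tfrac15\Vol(\FF\cdot R^\pm(X))+o(X^{5/6})$, which is the asserted answer.

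It therefore remains to bound $\frac{1}{C_{G_0}}\int_{\FF_X}\overline{\Vol}(B^\pm(n,a;X))\,dn\,d^\ast a$. As in \S\ref{s33}, every coordinate $x_{ij}$ of a point of $B^\pm(n,a;X)$ satisfies $|x_{ij}|\ll X^{1/60}w(x_{ij})$, so the projection onto any $d$-dimensional coordinate subspace spanned by a set $T$ of coordinates has volume $\ll X^{d/60}\prod_{x_{ij}\in T}w(x_{ij})$. On $\FF_X$ we have $w(a_{12})\gg X^{-1/60}$, hence $1\ll\bigl(X^{1/60}w(a_{12})\bigr)^{\theta}$ for any $\theta\geq 0$; choosing $\theta$ with $0\leq\theta<50-d$ (possible since $d\leq 49$) so that every $s_i$-exponent of
$$w(a_{12})^{\theta}\cdot\prod_{x_{ij}\in T}w(x_{ij})\cdot s_1^{-20}s_2^{-30}s_3^{-30}s_4^{-20}s_5^{-20}s_6^{-30}s_7^{-30}s_8^{-20}$$
is negative — which is possible precisely because $a_{12}$ has the strictly most negative weight among all the $x_{ij}$, and is a verification of exactly the kind carried out in \S\ref{s33} — the integral over $\{s_i>c\}$ against $d^\ast a$ converges, bounding this term by $O(X^{(d+\theta)/60})=o(X^{5/6})$. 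Summing over the finitely many coordinate subspaces completes the proof. I expect the only step requiring any computation to be this last exponent check; the remainder is the reduction via Propositions~\ref{propaneq0} and~\ref{propred} together with the volume bookkeeping, so I do not anticipate a genuine obstacle here beyond verifying the torus exponents.
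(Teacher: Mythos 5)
Your overall strategy coincides with the paper's: use Propositions~\ref{propaneq0} and~\ref{propred} to strip off the $a_{12}=0$ and reducible contributions, apply Davenport's Proposition~\ref{davlem} to what remains, observe that the count is supported where $X^{1/60}w(a_{12})\gg 1$, and unfold the resulting volume integral. Your volume bookkeeping for the main term (via $G_\R$-invariance of Lebesgue measure on $V_\R$ plus the Jacobian formula of Proposition~\ref{propjac}, which makes $\Vol(U\cdot R^\pm(X))/\Vol(U)$ independent of $U$) is a correct alternative to the paper's Fubini step.

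Where you genuinely diverge from the paper is in the error term. The paper uses a single clean observation: on the region where $X^{1/60}w(a_{12})\gg 1$, all fifty coordinate ranges are $\gg 1$ (by coordinate-wise minimality of $w(a_{12})$), so the largest coordinate projection of $B^\pm(n,a;X)$ is the $49$-dimensional one obtained by deleting $a_{12}$, and hence $\overline{\Vol}(B^\pm(n,a;X))\ll\Vol(B^\pm(n,a;X))/(X^{1/60}w(a_{12}))$; after that one splits $\FF$ according to the size of $w(a_{12})$. You instead bound the $d^\ast a$-integral of each $d$-dimensional projection separately by multiplying through by $(X^{1/60}w(a_{12}))^\theta\geq 1$ and choosing $\theta\in[0,50-d)$ to make the torus exponents negative.

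That $\theta$-trick does work, but the justification you give --- ``possible precisely because $a_{12}$ has the strictly most negative weight'' --- is not a proof, and a naive case-by-case ``verification of the torus exponents'' over all coordinate subsets $T$ is infeasible. The correct uniform reason requires one more fact you did not invoke: the product of all fifty weights is $1$ (equivalently, $G_\R$ acts on $V_\R$ with Jacobian~$1$, so $\sum_x e_i(x)=0$ for each $i$, where $e_i(x)$ is the $s_i$-exponent of $w(x)$). Combined with $e_i(a_{12})\leq e_i(x)$ for all $x$ (coordinate-wise minimality), this gives $e_i(T)=-\sum_{x\notin T}e_i(x)\leq(50-d)\,|e_i(a_{12})|$ for any $T$ with $|T|=d$, so the constraint $e_i(T)+\theta\,e_i(a_{12})-\kappa_i<0$ (with $\kappa_i\in\{20,30\}$) is already satisfied strictly before $\theta$ reaches $50-d$. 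You should state and prove this inequality explicitly; as written, the existence of an admissible $\theta$ is asserted rather than established, and that is the one non-routine step of the proof.
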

\begin{proof}
  The proof of Proposition \ref{propmainterm} is very similar to that
  of \cite[Proposition 12]{dodpf}. If $v\in B^\pm(n,a;X)$, then we
  know that $a_{12}(v)=O(X^{1/60}w(a_{12}))$. Thus, by Propositions
  \ref{propaneq0} and \ref{propred}, we obtain
\begin{equation}\label{proofeq}
\displaystyle\frac{1}{C_{G_0}}\int_{na\in\FF}\#\{B^\pm(n,a;X)\cap V^\irr_\Z\}dnd^\ast a=\displaystyle\frac{1}{C_{G_0}}\int_{\substack{na\in\FF\\X^{1/60}w(a_{12})\gg 1}}\#\{B^\pm(n,a;X)\cap V_\Z\}dnd^\ast a+o(X^{5/6}).
\end{equation}
Since $a_{12}$ has minimal weight, and the projection of
$B^\pm(n,a;X)$ onto $a_{12}$ has length greater than an absolute
positive constant when $X^{1/60}w(a_{12})\gg 1$, Proposition
\ref{davlem} implies that the main term on the right hand side of
\eqref{proofeq} is equal to
\begin{equation}\label{eqproofsmallvol}
\displaystyle\frac{1}{C_{G_0}}\int_{\substack{na\in\FF\\X^{1/60}w(a_{12})\gg
    1}}\left[\Vol(B^\pm(n,a;X))+O\left(\frac{\Vol(B^\pm(n,a;X))}{X^{1/60}w(a_{12})}\right)\right]dnd^\ast
a.
\end{equation}
Since the region $\{nak\in\FF:w(a_{12})\ll X^\epsilon\}$ has volume
$o(1)$ for any $\epsilon<1/60$, \eqref{eqproofsmallvol} is equal to
$$
\displaystyle\frac{1}{C_{G_0}}\int_{na\in\FF}\Vol(B^\pm(n,a;X))dnd^\ast a+o(X^{5/6}).
$$
The proposition follows since
$$
\displaystyle\frac{1}{C_{G_0}}\int_{na\in\FF}\Vol(B^\pm(n,a;X))dnd^\ast a=\displaystyle\frac{1}{C_{G_0}}\int_{h\in G_0}\Vol(\FF h\cdot R^\pm(X))dh,
$$
and the volume of $\FF h\cdot R^\pm(X)$ is independent of $h$.
\end{proof}

Propositions \ref{propaneq0}, \ref{propred}, and \ref{propmainterm}
imply that
$$N(V_\Z^\pm,X)=\frac15\Vol(\FF\cdot R^\pm(X))+o(X^{5/6}).$$ Thus, to prove
Theorem \ref{thsec3main}, it only remains to compute the volume
$\Vol(\FF\cdot R^\pm(X))$.

\subsection{Computing the volume}

Let $dv$ denote the Euclidean measure on $V_\R$ normalized so that
$V_\Z$ has covolume $1$. The sets~$R^\pm$ contain exactly one point
having invariants $I$ and $J$ for every pair $(I,J)\in\R\times\R$
satisfying $\pm\Delta(I,J)>0$. Let
$dIdJ$ be the measure on these sets $R^\pm$. Recall that we defined
$\omega$ to be a differential that generates the rank 1 module of
top-degree left-invariant differential forms of $G$ over~$\Z$.  With these measure
normalizations, we have the following proposition whose proof is
identical to that of \cite[Proposition 2.8]{BS}.
\begin{proposition}\label{propjac}
  For any measurable function $\phi$ on $V_\R$, we have
\begin{equation}\label{Jac}
|\J|\cdot\int_{p_{I,J}\in R^{\pm}}
\int_{h\in G_\R}\phi(h\cdot p_{I,J}))\,\omega(h)\,dI dJ=\int_{G_\R\cdot R^{\pm}}\phi(v)dv=5\int_{\pnr}\phi(v)dv,
\end{equation}
where $\J$ is a nonzero rational constant and $p_{I,J}$ is the point
in $R^\pm$ having invariants $I$ and $J$.
\end{proposition}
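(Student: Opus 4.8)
The plan is to follow the proof of \cite[Proposition~2.8]{BS}: exhibit $dv$ on $\pnr$ as the pushforward of a product measure under the orbit map $(h,p_{I,J})\mapsto h\cdot p_{I,J}$, and then identify the resulting density as a nonzero rational constant. The first point is that $dv$ is $G_\R$-invariant: on each of the five copies of the $10$-dimensional space of skew-symmetric $5\times 5$ matrices, $M\mapsto g_1Mg_1^t$ has determinant $(\det g_1)^4$, so $g_1$ scales $dv$ by $(\det g_1)^{20}$, while the action of $g_2^t$ on the five coordinate slots scales $dv$ by $(\det g_2)^{10}$; altogether $dv$ is scaled by $\det(g_1,g_2)^{10}=1$. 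By Proposition~\ref{propgrvr} the map $\sigma\colon G_\R\times\pnrg\to\pnr$, $(h,p_{I,J})\mapsto h\cdot p_{I,J}$, is surjective, and by Lemma~\ref{lemstabsize} the fibre of $(I,J)\colon\pnr\to\R^2$ over any point with $\Delta\neq 0$ is a single orbit $\cong G_\R/\Stab$ with $\#\Stab=5$; since $\dim(G_\R\times\pnrg)=48+2=50=\dim V_\R$, on the locus $\Delta\neq 0$ the map $\sigma$ is a $5$-to-$1$ local diffeomorphism. Hence \eqref{Jac} is equivalent to the assertion that the Jacobian of $\sigma$, computed with respect to $\omega$ on $G_\R$, $dI\,dJ$ on $\pnrg$, and $dv$ on $V_\R$, is a nonzero rational constant $|\J|$; the factor $5$ in \eqref{Jac} is then exactly the degree of $\sigma$.

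Denote this Jacobian by $j$. From $\sigma(gh,p)=g\cdot\sigma(h,p)$, the left-invariance of $\omega$, and the $G_\R$-invariance of $dv$, comparing differentials at $(h,p)$ and $(gh,p)$ gives $j(gh,p)=j(h,p)$ for all $g$; taking $h$ trivial shows that $j$ depends only on $p$, hence only on the invariants $(I,J)$. I would then compute $j(I,J)$ not at $p_{I,J}$ but at the explicit point $v_{I,J}$ of \eqref{eqxij}. Since $G$ is unimodular, $\omega$ is bi-invariant; as $v_{I,J}$ lies on the same $G_\R$-orbit as $p_{I,J}$ while $(I,J)\mapsto v_{I,J}$ is a local section of the invariant map, replacing the section $\pnrg$ by this algebraic section alters the tangent data only by vectors tangent to the orbit and, via $\mathrm{Ad}$-invariance of $\omega$, does not change $j$. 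Concretely, $j(I,J)=|\det M(I,J)|$, where the columns of $M(I,J)$ are the vectors $X\cdot v_{I,J}$ for $X$ ranging over an $\omega$-unimodular basis of the Lie algebra $\mathfrak g$ of $G$, together with $\partial v_{I,J}/\partial I$ and $\partial v_{I,J}/\partial J$, all expressed in a $\Z$-basis of $V_\Z$. Because the entries of $v_{I,J}$ are $\Q$-polynomials in $I$ and $J$, this exhibits $j(I,J)=|P(I,J)|$ for some $P\in\Q[I,J]$.

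To pin down $P$, I would use the scaling action. By its construction in \eqref{eqrij}, $p_{I,J}$ is a positive scalar multiple of a form $v_{I_0,J_0}$ with $H(I_0,J_0)=1$, whence $t\cdot p_{I,J}=p_{t^{20}I,\,t^{30}J}$ for $t>0$; combining $t\cdot\sigma(h,p_{I,J})=\sigma(h,p_{t^{20}I,t^{30}J})$ with $\deg I=20$, $\deg J=30$ and $\dim V_\R=50$ yields $j(t^{20}I,t^{30}J)=j(I,J)$. Writing $P$ as a sum of its weighted-homogeneous parts, with $I$ of weight $20$ and $J$ of weight $30$, forces every part of nonzero weight to vanish, so $P$ is a weighted-homogeneous polynomial of weight $0$, hence a constant; it is nonzero since $\sigma$ is a local diffeomorphism off $\Delta=0$. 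Taking $\J$ equal to this rational number then gives \eqref{Jac}.

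The step I expect to be most delicate is the second one: justifying carefully that the density $j$ is insensitive to the choice of section of the map $(I,J)$ — so that the non-algebraic set $\pnrg$ may be traded for the algebraic section $v_{I,J}$ — and that the resulting quantity is a genuine polynomial in $I,J$ rather than a rational function with poles along $\{\Delta=0\}$; once this is established, the homogeneity argument collapses it to a constant at once. If one also wished to know the value of $\J$ — which is not needed here, only its rationality and nonvanishing — one would evaluate $\det M(I,J)$ at a single convenient pair $(I,J)$ using Fisher's normalization of $v_{I,J}$.
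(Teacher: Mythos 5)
Your proposal is correct and follows essentially the same approach as the paper, which does not write out a proof but instead appeals to the argument of \cite[Proposition~2.8]{BS}: factor $dv$ through the $5$-to-$1$ orbit map $\sigma\colon G_\R\times\pnrg\to\pnr$, observe that the resulting Jacobian density depends only on $(I,J)$, trade the non-algebraic section $\pnrg$ for the polynomial section $v_{I,J}$ to see the density is polynomial, and use the weighted-homogeneity ($I$ of weight $20$, $J$ of weight $30$, $\dim V_\R=50$) to conclude it is a nonzero constant. Your identification of the delicate point — that the density is insensitive to the choice of section, which rests on bi-invariance of $\omega$ plus the block-triangular nature of the change of splitting $T_vV_\R=(\mathfrak g\cdot v)\oplus(\text{section tangent})$ — is exactly the step that makes the argument go through, and your treatment of it is sound.
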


We now compute the volume of the multiset $\FF\cdot R^\pm(X)$:
\begin{equation}
  \int_{\FF\cdot R^{\pm}(X)}\!\!\!\!\!dv=
  |\J|\cdot\int_{p_{I,J}\in R^{\pm}(X)}\int_{\FF}\omega(h)\,dI\,dJ=|\J|\cdot\Vol(\FF)\int_{R^{\pm}(X)}dI\,dJ.
\end{equation}
Up to an error of $O(X^{1/2})$, the quantity $\int_{R^{\pm}(X)}dI\,dJ$ is
equal to $N^{\pm}(X)$ (see the proof of \cite[Proposition 2.10]{BS} for details).

We conclude that
\begin{equation}
N(V_{\Z}^{\pm};X)=\frac15|\J|\cdot\Vol(G_\Z\backslash G_\R)N^\pm(X)+o(X^{5/6}).
\end{equation}

\subsection{Congruence conditions and a squarefree sieve}\label{congse}

In this subsection, we prove a version of Theorem \ref{thsec3main} where we
count strongly irreducible $G_\Z$-orbits on points $V_\Z$ that satisfy any specified finite set of
congruence conditions.

For any set $S$ in $V_\Z$ that is
definable by congruence conditions, denote by $\mu_p(S)$
the $p$-adic density of the $p$-adic closure of $S$ in $V_{\Z_p}$,
where we normalize the additive measure $\mu_p$ on $V_{\Z_p}$ so that
$\mu_p(V_{\Z_p})=1$.
We then have the following theorem whose proof is identical to that of \cite[Theorem~2.11]{BS}.
\begin{theorem}\label{cong2}
Suppose $S$ is a subset of $\pnv$ defined by finitely many
congruence conditions. Then we have 
\begin{equation}\label{ramanujan}
N(S\cap\pnv;X)
  = N(\pnv;X)
  \prod_{p} \mu_p(S)+o(X^{5/6}),
\end{equation}
where $\mu_p(S)$ denotes the $p$-adic density of $S$ in $V_\Z$, and
where the implied constant in $o(X^{5/6})$ depends only on $S$.
\end{theorem}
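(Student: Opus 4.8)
The plan is to reduce \eqref{ramanujan} to a single modulus and then re-run the geometry-of-numbers count of \S3 with the lattice $V_\Z$ replaced by a translate of a finite-index sublattice. Since $S$ is cut out by finitely many congruence conditions, there is an integer $m$ and a subset $T\subseteq V_\Z/mV_\Z$ so that $S$ consists of those $v\in\pnv$ whose reduction mod $m$ lies in $T$. Set $\mu_m(S):=\#T/m^{50}$ (recall $V_\Z$ has rank $50$, so $[V_\Z:mV_\Z]=m^{50}$); then $\mu_m(S)=\prod_{p\mid m}\mu_p(S)=\prod_p\mu_p(S)$, the last product being finite because $\mu_p(S)=1$ for every $p\nmid m$. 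It therefore suffices to prove $N(S\cap\pnv;X)=\mu_m(S)\,N(\pnv;X)+o(X^{5/6})$, with the implied constant allowed to depend on $m$.

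The key observation is that the entire apparatus of \S\ref{s31} and of the averaging subsection — the Siegel fundamental domain $\FF$, the averaging identity \eqref{eqavgfinal}, and Davenport's estimate (Proposition \ref{davlem}) — refers only to the action of $G_\R$ on $V_\R$ together with counts of lattice points in bounded regions, and is thus insensitive to which full-rank sublattice of $V_\R$ one counts in. Writing $S$ as a disjoint union of at most $m^{50}$ cosets of $mV_\Z$, I would apply Proposition \ref{davlem} one coset at a time: the number of points of the bounded multiset $B^\pm(n,a;X)$ in a fixed coset $v_0+mV_\Z$ is $m^{-50}\Vol(B^\pm(n,a;X))$ plus an error bounded by the same coordinate-projection volumes appearing in the proof of Proposition \ref{propmainterm} (with implied constant depending only on $m$, since the number and degrees of the defining inequalities are unchanged). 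Summing over the cosets comprising $S$ and integrating over $na\in\FF$ exactly as in Proposition \ref{propmainterm} produces main term $\mu_m(S)\cdot\tfrac15\Vol(\FF\cdot R^\pm(X))$ and an error that is still $o(X^{5/6})$.

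It then remains to see that the two negligibility inputs survive the restriction to $S$, which is automatic since Propositions \ref{propaneq0} and \ref{propred} are \emph{upper} bounds: the number of strongly irreducible points of $S$ with $a_{12}=0$ is at most $N(V^\irr_\Z(0);X)=O(X^{499/600})$ — indeed, the cusp partition in the proof of Proposition \ref{propaneq0} may be intersected termwise with the residue classes of $T$, each of the resulting volume estimates only improving — and the reducible contribution with $a_{12}\neq0$ is bounded above by the left-hand side of \eqref{eqprop22}, which is $o(X^{5/6})$; both errors absorb the extra factor $m^{50}$ from the coset decomposition because $m$ is fixed. Assembling these three ingredients precisely as Theorem \ref{thsec3main} was deduced from Propositions \ref{propaneq0}, \ref{propred}, and \ref{propmainterm}, and then invoking Theorem \ref{thsec3main} itself to rewrite $\tfrac15\Vol(\FF\cdot R^\pm(X))$ as $N(\pnv;X)+o(X^{5/6})$, yields \eqref{ramanujan}. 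The one point requiring any care — and the reason this argument is ``identical to \cite[Theorem~2.11]{BS}'' rather than vacuous — is the uniform bookkeeping of implied constants under the decomposition into $m^{50}$ cosets; since $m$ depends only on $S$, this bookkeeping is harmless and is exactly what produces the stated $S$-dependence of the constant in $o(X^{5/6})$.
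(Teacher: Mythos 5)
Your proposal is correct and follows the same route the paper intends, namely the argument it cites from [BS, Theorem 2.11]: reduce to a single modulus $m$, decompose $S$ into cosets of $mV_\Z$, re-run the averaging and Davenport counts of \S3 coset-by-coset with main term scaled by $m^{-50}$ and error constants depending only on $m$, and observe that Propositions~\ref{propaneq0} and \ref{propred} are upper bounds that pass unchanged to any subset. No gaps; this is the standard congruence-refinement of the main count.
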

We furthermore have the following weighted version of Theorem
\ref{cong2} whose proof is identical to that of~\cite[Theorem 2.12]{BS}.

\begin{theorem}\label{cong3}
  Let $p_1,\ldots,p_k$ be distinct prime numbers. For $j=1,\ldots,k$,
  let $\phi_{p_j}:V_\Z\to\R$ be a $G_\Z$-invariant function on
  $V_\Z$ such that $\phi_{p_j}(v)$ depends only on the congruence
  class of $v$ modulo some power $p_j^{a_j}$ of $p_j$.  Let
  $N_\phi(V_\Z^\pm;X)$ denote the number of irreducible
  $G_\Z$-orbits in $V_\Z^{\pm}$ having height less than $X$,
  where each orbit $G_\Z\cdot v$ is counted with weight
  $\phi(v):=\prod_{j=1}^k\phi_{p_j}(v)$. Then we have
\begin{equation}
N_\phi(V_\Z^{\pm};X)
  = N(V_\Z^{\pm};X)
  \prod_{j=1}^k \int_{v\in V_{\Z_{p_j}}}\tilde{\phi}_{p_j}(v)\,dv+o(X^{5/6}),
\end{equation}
where $\tilde{\phi}_{p_j}$ is the natural extension of ${\phi}_{p_j}$
to $V_{\Z_{p_j}}$ by continuity, $dv$ denotes the additive
measure on~$V_{\Z_{p_j}}$ normalized so that $\int_{v\in
  V_{\Z_{p_j}}}dv=1$, and where the implied constant in the error term
depends only on the local weight functions ${\phi}_{p_j}$.
\end{theorem}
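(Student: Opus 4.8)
The plan is to reduce this weighted statement to finitely many applications of the unweighted congruence count of Theorem~\ref{cong2}, following \cite[Theorem~2.12]{BS}. First I would exploit the hypothesis that each $\phi_{p_j}$ depends only on $v\bmod p_j^{a_j}$: such a function takes only finitely many values, say $w_j^{(1)},\ldots,w_j^{(m_j)}$, so setting $S_{j,\ell}:=\{v\in V_\Z:\phi_{p_j}(v)=w_j^{(\ell)}\}$ gives a partition $V_\Z=\bigsqcup_{\ell=1}^{m_j}S_{j,\ell}$ in which each piece is a finite union of congruence classes modulo $p_j^{a_j}$ and, since $\phi_{p_j}$ is $G_\Z$-invariant, is itself $G_\Z$-invariant. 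Both properties are preserved under intersection, so the sets $S_{\vec\ell}:=\bigcap_{j=1}^k S_{j,\ell_j}$, indexed by tuples $\vec\ell=(\ell_1,\ldots,\ell_k)$ with $1\le\ell_j\le m_j$, again satisfy the hypotheses of Theorem~\ref{cong2}.

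Next I would decompose the weighted count along this partition. Since every $G_\Z$-orbit lies entirely inside a single $S_{\vec\ell}$, and $\phi$ is constantly $\prod_{j=1}^k w_j^{(\ell_j)}$ on $S_{\vec\ell}$, we get
\begin{equation*}
N_\phi(V_\Z^\pm;X)=\sum_{\vec\ell}\Bigl(\prod_{j=1}^k w_j^{(\ell_j)}\Bigr)\,N(S_{\vec\ell}\cap V_\Z^\pm;X).
\end{equation*}
The primes $p_1,\ldots,p_k$ being distinct, the Chinese Remainder Theorem factors the $p$-adic densities, $\prod_p\mu_p(S_{\vec\ell})=\prod_{j=1}^k\mu_{p_j}(S_{j,\ell_j})$, all local factors at primes outside $\{p_1,\ldots,p_k\}$ being $1$. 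Feeding each of the finitely many sets $S_{\vec\ell}$ through Theorem~\ref{cong2}, and noting that a finite sum of $o(X^{5/6})$ error terms is again $o(X^{5/6})$ with implied constant depending only on the $\phi_{p_j}$, I would obtain
\begin{align*}
N_\phi(V_\Z^\pm;X)&=N(V_\Z^\pm;X)\sum_{\vec\ell}\ \prod_{j=1}^k w_j^{(\ell_j)}\mu_{p_j}(S_{j,\ell_j})+o(X^{5/6})\\
&=N(V_\Z^\pm;X)\prod_{j=1}^k\Bigl(\sum_{\ell=1}^{m_j}w_j^{(\ell)}\mu_{p_j}(S_{j,\ell})\Bigr)+o(X^{5/6}),
\end{align*}
the last equality being the expansion of a product of sums into a sum of products, read backwards.

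It then remains only to recognize the inner sums as local integrals: $\tilde\phi_{p_j}$ is by construction the locally constant function on $V_{\Z_{p_j}}$ taking the value $w_j^{(\ell)}$ on the $p_j$-adic closure of $S_{j,\ell}$, a set of $\mu_{p_j}$-measure $\mu_{p_j}(S_{j,\ell})$, so $\sum_\ell w_j^{(\ell)}\mu_{p_j}(S_{j,\ell})=\int_{V_{\Z_{p_j}}}\tilde\phi_{p_j}(v)\,dv$, which finishes the argument. I do not expect any genuine obstacle here: the analytic content is already packaged in Theorems~\ref{thsec3main} and~\ref{cong2}, and what remains above is purely combinatorial bookkeeping. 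The only step meriting a second's thought is checking that the $S_{\vec\ell}$ satisfy the hypotheses of Theorem~\ref{cong2}, which is immediate from $G_\Z$-invariance of each $\phi_{p_j}$ together with its dependence on only a bounded power of $p_j$; this is precisely why the proof may be declared \emph{identical} to that of \cite[Theorem~2.12]{BS}.
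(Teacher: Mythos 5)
Your decomposition into the level sets $S_{\vec\ell}$, application of Theorem~\ref{cong2} on each piece, factoring of the densities via CRT, and re-packaging as a product of local integrals is exactly the argument of \cite[Theorem~2.12]{BS}, which the paper invokes verbatim. The proposal is correct and matches the paper's (cited) proof.
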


For our applications, we also require a version of Theorem \ref{cong3} which counts 
certain weighted $G_\Z$-orbits where the weights are defined by congruence 
conditions modulo infinitely many prime powers. To describe which weights 
are permissible, we have the following definitions.

A function $\phi:V_\Z\to[0,1]\subset\R$ is said to be {\it defined by congruence
  conditions} if, for all primes $p$, there exist functions
$\phi_p:V_{\Z_p}\to[0,1]$ satisfying the following conditions:
\begin{itemize}
\item[(2)] For all $v\in V_\Z$, the product $\prod_p\phi_p(v)$ converges to $\phi(v)$.
\item[(3)] For each prime $p$, the function $\phi_p$ is
locally constant outside some closed set $S_p \subset V_{\Z_p}$ of measure zero.
\end{itemize}
Such a function $\phi$ is called {\it acceptable} if, for sufficiently
large primes $p$, we have $\phi_{p}(v)=1$ whenever~$p^2\nmid\Delta(v)$.

The key ingredient in proving the stronger version of Theorem \ref{cong3} is the following uniformity/tail estimate:
\begin{theorem}\label{thunif}
For a prime $p$, let $\W_p$ denote the set of elements in $V_\Z$ whose discriminants are divisible by $p^2$.
  Let $\epsilon>0$ be fixed. Then we have:
  \begin{equation}\label{equnif}
N\bigl(\displaystyle\cup_{p>M}\W_p,X\bigr)=O_\epsilon(X^{5/6}/(M\log M)+X^{49/60})+O(\epsilon X^{5/6}),
  \end{equation}
where the implied constant is independent of $M$ and $X$.
\end{theorem}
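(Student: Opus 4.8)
```latex
The plan is to estimate $N(\cup_{p>M}\W_p,X)$ by splitting the elements of $\W_p$ according to the size of $p$ relative to $X$, and in each range bounding the count of $G_\Z$-orbits on $\W_p$ with bounded height. The main decomposition is into the contribution from ``small'' primes $M<p\leq X^{1/6}$ (say), which will be handled by a geometry-of-numbers argument in the main body of the fundamental domain together with a point-count on the $p$-adic fibre $\W_p$, and the contribution from ``large'' primes $p>X^{1/6}$, which will require a different argument since for such $p$ the condition $p^2\mid\Delta(v)$ forces the element to be quite special. For the small-prime range, I would apply the averaging setup of \S\ref{s33}, noting that the number of $v\in V_{\Z_p}$ with $p^2\mid\Delta(v)$ has $p$-adic density $O(1/p^2)$ (since $\Delta$ is a nonzero polynomial mod $p$, so $p\mid\Delta(v)$ has density $O(1/p)$, and a further factor $O(1/p)$ comes from the second divisibility away from the singular locus, which itself has codimension $\geq 2$). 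Summing $X^{5/6}\cdot O(1/p^2)$ over $M<p\leq X^{1/6}$ gives the $O(X^{5/6}/(M\log M))$ term after partial summation, with the $X^{49/60}$ absorbing the geometry-of-numbers error terms from Proposition~\ref{davlem} applied uniformly.

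The harder part is the large-prime range $p>X^{1/6}$. Here the naive bound $X^{5/6}/p^2$ summed over $p$ would already be $o(X^{5/6})$, but one cannot simply invoke the density estimate because the error terms in the lattice-point count are not uniform enough when $p$ is comparable to $X^{1/60}$-scale quantities. Instead I would argue that if $p^2\mid\Delta(v)$ with $p$ large and $v$ of height $<X$, then either $p^2\mid\Delta(v)$ ``for a geometric reason'' --- meaning $v$ reduces mod $p$ to a point on (a neighbourhood of) the discriminant locus in a way that persists --- or $v$ lies in a sublattice of $V_\Z$ of index a power of $p$. The first case is controlled by the fact that the singular locus of $\{\Delta=0\}$ has codimension $\geq 2$, so that the reduction mod $p$ lands in a set of size $O(p^{48})$ out of $p^{50}$; the second case is controlled because a sublattice of index $p^2$ in $V_\Z$ intersected with a box of side $X^{1/60}$ contains $O(X^{5/6}/p^2)$ points provided $p$ is not too large, and is empty once $p$ exceeds the box size. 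This is essentially the argument of \cite[\S3]{geosieve} (the ``second'' squarefree sieve) adapted to the present representation; I would cite that paper for the structural input and reproduce only the bookkeeping specific to $V$.

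The key technical point, and the one I expect to be the main obstacle, is obtaining the power-saving uniformity in $p$ for the geometry-of-numbers count in the range where $p$ is a small power of $X$: one needs that the error term in counting $\W_p$-points in $B^\pm(n,a;X)$, summed over the fundamental domain and over $p$, does not overwhelm the main term, and this requires controlling projections of $B^\pm(n,a;X)$ onto coordinate subspaces (as in Proposition~\ref{davlem}) simultaneously with the congruence condition defining $\W_p$. The resolution is to use that $\W_p$, being defined by $p^2\mid\Delta$, is contained in a union of $O(p^{48}+p^{49})=O(p^{49})$ translates of the sublattice $pV_\Z$ together with the preimage of the codimension-$\geq 2$ singular locus; on each translate the count is a clean lattice-point estimate, and the number of translates times the per-translate error is acceptable. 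Once this uniform estimate is in hand, the theorem follows by summing the two ranges and absorbing the explicit constants into the $O(\epsilon X^{5/6})$ term, which accounts for the contribution of the finitely many primes $p\leq M$ that one chooses not to track (or, dually, for the tail of the convergent sum $\sum_p 1/p^2$ beyond any fixed threshold).
```
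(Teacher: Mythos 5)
Your proposal correctly identifies the two ingredients that matter---the geometric sieve of \cite{geosieve} for the ``strong'' part of the condition, and the need for an extra argument to deal with the ``weak'' part where error terms in the uniform-in-$p$ lattice-point count threaten to overwhelm the main term---but the proposed resolution of that second issue does not work, and this is exactly where the paper's proof has its only nontrivial idea.

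The paper's argument splits $\W_p$ into $\W_p^{(1)}$ (discriminant \emph{strongly} divisible by $p^2$: $p^2\mid\Delta(v+pw)$ for all $w\in V_\Z$, i.e.\ $v\bmod p$ lies in the codimension-$\geq 2$ singular locus) and $\W_p^{(2)}=\W_p\setminus\W_p^{(1)}$ (weak divisibility: the reduction has a single nodal singularity). For $\W_p^{(1)}$, one truncates $\FF$ to a bounded $\FF^{(\epsilon)}$ with volume $(1-\epsilon)\Vol(\FF)$, applies \cite[Thm.~3.3]{geosieve} directly, and absorbs the discarded cusp region into $O(\epsilon X^{5/6})$. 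For $\W_p^{(2)}$ the paper does \emph{not} attempt a direct lattice count. Instead it shows, after a $G_{\F_p}$-normalization placing the node at $[1:0:0:0:0]$, that a representative $v'$ in the $G_\Z$-orbit of $v$ has a specific set of coordinates divisible by $p$ (and one by $p^2$), and that a fixed element $\gamma\in G_\Q$ sends $v'$ into $\W_p^{(1)}$ with the same discriminant, producing a discriminant-preserving map $G_\Z\backslash\W_p^{(2)}\to G_\Z\backslash\W_p^{(1)}$ which is then proved to be at most $2$-to-$1$ by a separate lemma about common isotropic $4$-planes of the associated quinary quadrics. This reduces the weak count to the already-bounded strong count, with no geometry of numbers on $\W_p^{(2)}$ at all.

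Your resolution---covering $\W_p$ by $O(p^{49})$ translates of $pV_\Z$ plus the codimension-$\geq 2$ locus, and applying Davenport translate-by-translate---does not close the gap. The $O(p^{49})$ translates of $pV_\Z$ only encode $p\mid\Delta$, not $p^2\mid\Delta$; their union has density $\sim 1/p$, so the resulting main term $\sum_{p>M} X^{5/6}/p$ diverges. If instead you work mod $p^2$ and cover $\W_p^{(2)}$ by $\sim p^{98}$ translates of $p^2V_\Z$, the main term is fine ($\sim X^{5/6}/p^2$) but the Davenport error, roughly $X^{49/60}/p^{98}$ per translate times $p^{98}$ translates, is $\sim X^{49/60}$ \emph{per prime}; summed over $p$ up to any power of $X$ this greatly exceeds $X^{5/6}$. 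This is precisely the obstruction you flag as ``the main obstacle,'' but the translate-counting you propose does not overcome it. The paper's route around it---the explicit reduction of the weak locus to the strong locus via $\gamma$ and the $2$-to-$1$ lemma---is the missing ingredient, and it is representation-specific (it uses the nodal-singularity structure and the $4\times4$ sub-Pfaffian quadrics), not something one can borrow wholesale from \cite{geosieve}. Your small/large-prime split at $X^{1/6}$ also plays no role in the actual proof, which handles all $p>M$ uniformly once the weak-to-strong reduction is in place.
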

\begin{proof}
Let $\W^{(1)}_p\subset V_\Z$ be the $G_\Z$-invariant subset consisting
of elements whose discriminants are strongly divisible by $p^2$, where
an element $v$ is said to have discriminant {\it strongly divisible by
  $p^2$} if for every $w\in V_\Z$, we have $p^2\mid \Delta(v+pw)$. For
$\epsilon>0$, let $\FF^{(\epsilon)}\subset\FF$ denote the subset of
elements $na(s_1,s_2,s_3,s_4,s_5,s_6,s_7,s_8)k\in\FF$ such that the
$s_i$ are bounded above by a constant to ensure that
$\Vol(\FF^{(\epsilon)})=(1-\epsilon)\Vol(\FF)$. Then
$\FF^{(\epsilon)}\cdot R^\pm(X)$ is a bounded domain in $V_\R$ that
expands homogeneously as $X$ grows. From \cite[Theorem 3.3]{geosieve}, we
obtain
\begin{equation}\label{unifest1}
\#\{\FF^{(\epsilon)}\cdot R^\pm(X)\bigcap (\cup_{p>M}\W^{(1)}_p)\}=O(X^{5/6}/(M\log M)+X^{49/60}).
\end{equation}
Also, the results of \S\ref{s31}--\S\ref{s33} imply that
\begin{equation}\label{unifest2}
\#\{(\FF\backslash\FF^{(\epsilon)})\cdot R^\pm(X)\bigcap
V_\Z^\irr\}=O(\epsilon X^{5/6}).
\end{equation}
Combining the estimates \eqref{unifest1} and \eqref{unifest2} yields
\eqref{equnif} with $\W_p$ replaced with $\W^{(1)}_p$.

Therefore, it remains to prove \eqref{equnif} with $\W_p$ replaced with $\W_p^{(2)}:=\W_p\setminus
\W^{(1)}_p$.  The set $\W_p^{(2)}$
consists of the elements $v\in V_\Z$ having
discriminant {\it weakly divisible} by~$p^2$, i.e., $p^2$ divides $\Delta(v)$ but 
does not strongly divide $\Delta(v)$. Thus an element $v$ has discriminant
weakly divisible by~$p^2$ precisely when~$p^2\mid \Delta(v)$ and
the genus one
curve over~$\F_p$ corresponding to the reduction of $v$ modulo $p$ has
a single nodal singularity.

Let $v=(A,B,C,D,E)\in\W_p^{(2)}$ be any such element, let
$\bar{v}=(\bar{A},\bar{B},\bar{C},\bar{D},\bar{E})\in V_{\F_p}$ be its
reduction modulo $p$, and let $C$ be the curve over $\F_p$ corresponding to
$\bar{v}$. We may assume that the nodal singularity of $C$ is at
$[1:0:0:0:0]\in\P^4_{\F_p}$, which implies that $\bar{A}$ has rank
$2$. Therefore, by replacing~$\bar{v}$ with a $G_{\F_p}$-translate if
necessary, we may assume that $\bar{a}_{12}$ is the only nonzero
coefficient of $\bar{A}$. We next claim that we may replace $v$ with a
$G_{\F_p}$-translate to ensure that
$\bar{b}_{45}=\bar{c}_{45}=\bar{d}_{45}=\bar{e}_{45}=0$. Indeed, since
$C$ has a double point at $P=[1:0:0:0:0]$, the intersection of $C$ and
the hyperplane section $t_1=0\subset \P^4_{\F_p}$ contains $P$ with
multiplicity at least $2$. As explained in \cite{geosieve}, this implies that
by replacing $\bar{v}$ with a $G_{\F_p}$-translate, if necessary, the
$\F_p$-span of the four $3\times 3$ matrices
$$
\left(\begin{smallmatrix}{0}&{b_{34}}&{b_{35}}&\\{-b_{34}}&{0}&{b_{45}}\\{-b_{35}}&{-b_{45}}&{0}\end{smallmatrix}\right),
\left(\begin{smallmatrix}{0}&{c_{34}}&{c_{35}}&\\{-c_{34}}&{0}&{c_{45}}\\{-c_{35}}&{-c_{45}}&{0}\end{smallmatrix}\right),
\left(\begin{smallmatrix}{0}&{d_{34}}&{d_{35}}&\\{-d_{34}}&{0}&{d_{45}}\\{-d_{35}}&{-d_{45}}&{0}\end{smallmatrix}\right),
\left(\begin{smallmatrix}{0}&{e_{34}}&{e_{35}}&\\{-e_{34}}&{0}&{e_{45}}\\{-e_{35}}&{-e_{45}}&{0}\end{smallmatrix}\right)
$$ has rank at most $2$; thus, by again replacing $\bar{v}$ by a suitable
$G_{\F_p}$-translate, we may assume that
$\bar{b}_{45}=\bar{c}_{45}=\bar{d}_{45}=\bar{e}_{45}=0$.

Let $Z\subset \var$ denote the set $\{a_{ij}:(i,j)\neq (1,2)\}\cup\{b_{45},c_{45},d_{45},e_{45}\}$. Given $v\in \W_p^{(2)}$, we
have already proven that there exists $v'$ in the $G_\Z$-orbit of $v$
such that $p\mid u(v')$ for every $u\in Z$. By evaluating the discriminant polynomial on such a $v'$, we conclude that if $p^2\mid \Delta(v')$, then $p^2\mid a_{45}(v')$.

Let $\gamma\in G_\Q$ be
$$
\gamma:=
\left[\left(\begin{smallmatrix}
{1}& {}&{}&{}&{}\\ {} & {1}&{}&{}&{}\\
{} & {}&{1}&{}&{}\\
{} & {}&{}&{p^{-1}}&{}\\
{}&{} & {}&{}&{p^{-1}}
\end{smallmatrix}\right),
\left(\begin{smallmatrix}
    {1}&{}&{}&{}&{}\\ {}&{p}&{}&{}&{}\\
{}&{}&{p}&{}&{}\\
{}&{}&{}&{p}&{}\\
{}&{}&{}&{}&{p}
\end{smallmatrix}\right)\right].
$$
Then $\gamma\cdot v'$ is an element of $\W_p^{(1)}$, and it has the
same discriminant as $v'$.
We thus obtain a map
$\phi:G_\Z\backslash\W_p^{(2)}\to G_\Z\backslash\W_p^{(1)}$ that is
discriminant-preserving. We now have the following lemma:
\begin{lemma}
The map $\phi$ is at most $2$ to $1$.
\end{lemma}
\begin{proof}
Consider a $G_\Z$-orbit in the image of $\phi$ and an element
$v\in\W_p^{(1)}$ in this orbit of the form $\gamma\cdot v'$ for some $v'\in W_p^{(2)}$. Let
$Q_1(t_1,t_2,t_3,t_4,t_5),\ldots, Q_5(t_1,t_2,t_3,t_4,t_5)$ be the
five quadratic forms corresponding to $v$. It is easy to check that
the action of $\gamma_p^{-1}$ acts on the quadratic forms as follows:
the forms $Q_1$, $Q_2$, and $Q_3$ are multiplied by $p^2$, the forms
$Q_4$ and $Q_5$ are multiplied by $p$, and the variables
$t_2,\ldots,t_5$ are divided by $p$. Thus, for
$\gamma_p^{-1}\cdot v$ to be integral, it is necessary and sufficient that the bottom
right $4\times 4$ submatrices of $Q_4$ and $Q_5$ be multiples of $p$.

Therefore, preimages of $G_\Z\cdot v$ under $\phi$ correspond to
$2$-dimensional subspaces of quinary quadratic forms over $\F_p$ generated by the
reductions modulo $p$ of $Q_1,\ldots,Q_5$, such that the quadratic
forms in this subspace contain a common $4$-dimensional isotropic hyperplane. 
Let $\overline{Q}_1,\ldots,\overline{Q}_5$ denote the reductions modulo $p$ of $Q_1,\ldots,Q_5$. 
Then we claim that no nonzero element
outside the $\F_p$-span of $\overline{Q}_4$ and $\overline{Q}_5$ can have a
$4$-dimensional isotropic subspace. Indeed, if there was such an
element, then we could assume without loss of generality that it was
$\overline{Q}_3$. However, then the
action of $\gamma_p^{-1}$ would take $Q_3$ to $Q_3'$, whose reduction
modulo $p$ would also contain a $4$-dimensional isotropic
subspace. This would force $\gamma_p^{-1}\cdot v\in \W_p^{(1)}$,
contradicting our assumption that
$\gamma_p^{-1}\cdot v\in\W_p^{(2)}$.

Thus, the lemma is true unless $\overline{Q}_4$ and $\overline{Q}_5$
possess more than two common isotropic $4$-dimensional subspaces. This only
happens when $\overline{Q}_4=\overline{Q}_5=0$. In this case, the
reduction modulo~$p$ of the quadratic forms corresponding to
$\gamma_p^{-1}\cdot v$ have $t_0^2$-, $t_0t_1$-, $t_0t_2$-, $t_0t_3$-,
$t_0t_4$-, and $t_0t_5$-coefficients equal to zero.  This again forces 
$\gamma_p^{-1}\cdot v\in\W_p^{(1)}$, a contradiction.
\end{proof}

\noindent

Therefore
$$
N\bigl(\displaystyle\cup_{p>M}\W_p^{(2)},X\bigr)\leq 2N\bigl(\displaystyle\cup_{p>M}\W_p^{(1)},X\bigr)=O_\epsilon(X^{5/6}/(M\log M)+X^{49/60})+O(\epsilon X^{5/6}),
$$
which concludes the proof of Theorem~\ref{thunif}.
\end{proof}

We thus obtain the following theorem:
\begin{theorem}\label{thsqfree}
    Let $\phi:V_\Z\to[0,1]$ be an acceptable function that is defined by
  congruence conditions via the local functions $\phi_{p}:V_{\Z_p}\to[0,1]$. Then, with
  notation as in Theorem~$\ref{cong3}$, we have:
\begin{equation}
N_\phi(V_\Z^\pm;X)
  = N(V_\Z^\pm;X)
  \prod_{p} \int_{v\in V_{\Z_{p}}}\phi_{p}(v)\,dv+o(X^{5/6}).
\end{equation}
\end{theorem}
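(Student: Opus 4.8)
The plan is to deduce Theorem~\ref{thsqfree} from the finite-level result Theorem~\ref{cong3} by a standard truncation-and-tail argument, using Theorem~\ref{thunif} to control the error introduced by truncating the infinite product of local conditions. First I would fix a large parameter $M$ and split the local data: let $\phi^{(M)}(v):=\prod_{p\leq M}\phi_p(v)$, a function depending only on congruence conditions modulo finitely many prime powers (plus the measure-zero bad sets $S_p$, which by condition~(3) can be absorbed harmlessly since each $\phi_p$ is locally constant off $S_p$). Applying Theorem~\ref{cong3} to the finitely many primes $p\leq M$ gives
\begin{equation*}
N_{\phi^{(M)}}(V_\Z^\pm;X)=N(V_\Z^\pm;X)\prod_{p\leq M}\int_{V_{\Z_p}}\phi_p(v)\,dv+o_M(X^{5/6}),
\end{equation*}
where the error term depends on $M$. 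Since $0\leq\phi\leq\phi^{(M)}\leq 1$ pointwise, we have $0\leq N_{\phi^{(M)}}(V_\Z^\pm;X)-N_\phi(V_\Z^\pm;X)$, and I must bound this difference uniformly in $M$.

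The key point is that the acceptability hypothesis says that for all sufficiently large primes $p$ (say $p>p_0$), one has $\phi_p(v)=1$ whenever $p^2\nmid\Delta(v)$; equivalently $1-\phi_p(v)$ is supported on $\W_p$, the set of $v\in V_\Z$ with $p^2\mid\Delta(v)$. Therefore, for $M>p_0$, any $v$ contributing to $N_{\phi^{(M)}}-N_\phi$ — i.e.\ any $v$ with $\phi^{(M)}(v)>\phi(v)$ — must have $\phi_p(v)<1$ for some $p>M$, hence must lie in $\cup_{p>M}\W_p$. Counting such orbits with weight at most $\phi^{(M)}(v)\leq 1$, we obtain
\begin{equation*}
0\leq N_{\phi^{(M)}}(V_\Z^\pm;X)-N_\phi(V_\Z^\pm;X)\leq N\bigl(\cup_{p>M}\W_p,X\bigr),
\end{equation*}
and Theorem~\ref{thunif} bounds the right side by $O_\epsilon\bigl(X^{5/6}/(M\log M)+X^{49/60}\bigr)+O(\epsilon X^{5/6})$, with the implied constant independent of $M$ and $X$. (Here I should also note that the finitely many primes $p\leq p_0$ for which acceptability might fail cause no trouble: their contribution is already incorporated exactly into the finite-level count $N_{\phi^{(M)}}$ once $M\geq p_0$.)

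Assembling the pieces: combine the two displays to get, for every $M>p_0$ and every $\epsilon>0$,
\begin{equation*}
N_\phi(V_\Z^\pm;X)=N(V_\Z^\pm;X)\prod_{p\leq M}\int_{V_{\Z_p}}\phi_p(v)\,dv+o_M(X^{5/6})+O_\epsilon\bigl(X^{5/6}/(M\log M)+X^{49/60}\bigr)+O(\epsilon X^{5/6}).
\end{equation*}
Now let $X\to\infty$, then $M\to\infty$, then $\epsilon\to 0$: since $\prod_{p\leq M}\int\phi_p\to\prod_p\int\phi_p$ (the partial products are decreasing in $[0,1]$, hence convergent, and the limit equals $\prod_p\int_{V_{\Z_p}}\phi_p\,dv$ by monotone convergence together with condition~(2)), and since $N(V_\Z^\pm;X)=O(X^{5/6})$ by Theorem~\ref{thsec3main} and the Remark following it, the error terms are all $o(X^{5/6})$ in this iterated limit. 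This yields exactly the claimed formula. The main obstacle — and the only substantive input beyond bookkeeping — is the uniformity estimate Theorem~\ref{thunif}, which has already been established above; the rest is the routine interchange of limits, for which the decreasing-product structure and the uniform-in-$M$ tail bound are precisely what make the argument go through. This is identical in structure to the passage from finite to infinite congruence sieves in \cite[\S2]{BS}.
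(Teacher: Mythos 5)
Your proof is correct and is essentially the argument the paper intends: the paper simply remarks that Theorem~\ref{thsqfree} follows from Theorem~\ref{thunif} ``just as \cite[Theorem 2.21]{BS} followed from \cite[Theorem 2.13]{BS},'' and the cited argument in \cite{BS} is exactly the truncation-and-tail scheme you spell out (apply the finite-level Theorem~\ref{cong3} for $p\leq M$, bound the discrepancy by $N(\cup_{p>M}\W_p;X)$ via acceptability, invoke the uniform tail estimate, and pass to the iterated limit $X\to\infty$, $M\to\infty$, $\epsilon\to0$). The only cosmetic point worth flagging is that you should quote $N(V_\Z^\pm;X)\asymp X^{5/6}$ from Theorem~\ref{thsec3main} and its Remark when normalizing, and the convergence of $\prod_{p\le M}\int\phi_p$ is automatic from the terms lying in $[0,1]$ rather than needing condition~(2), but neither affects the validity of the argument.
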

Theorem \ref{thsqfree} follows from Theorem \ref{thunif} just as \cite[Theorem 2.21]{BS} followed from \cite[Theorem 2.13]{BS}.

\subsection{The number of reducible points and points with large stabilizers in
  the main bodies of the fundamental domains is negligible}\label{s36}

In this section we first prove Proposition \ref{propred}, which states
that the number of integral elements of bounded height that are not strongly irreducible
in the main body of the fundamental domain is negligible. We then also
prove, by similar methods, that the number of strongly irreducible $G_\Z$-orbits of
elements of bounded height having a nontrivial stabilizer in $G_\Q$ 
is negligible.

\vspace{.1in}
\noindent{\bf Proof of Proposition \ref{propred}:} Let $v\in V_\Z$
have invariants $I$ and $J$, and let $p>5$ be a prime. If the
$G_\Q$-orbit of $v$ corresponds to the identity element in the
$5$-Selmer group of $E^{I,J}$, then the $G_{\F_p}$-orbit of the
reduction of $v$ modulo $p$ also corresponds to the identity element of
$E^{I,J}(\F_p)/5E^{I,J}(\F_p)$ under the correspondence of
Theorem~\ref{leme2ep}. Thus, if $\bar{v}\in V_{\F_p}$ is an element
having nonzero discriminant that corresponds to a nontrivial element
in $E^{I(\bar{v}),J(\bar{v})}(\F_p)/5E^{I(\bar{v}),J(\bar{v})}(\F_p)$,
then every $v\in V_\Z$ that reduces to $\bar{v}$ modulo $p$ is
strongly irreducible. Denote the set of all such $\bar{v}\in V_{\F_p}$
by $V_{\F_p}^{\neq \rm{id}}$. We now show that $\#V_{\F_p}^{\neq
  \rm{id}}\gg \#V_{\F_p}/p$ where the implied constant is independent
of $p$. Indeed, by work of Deuring \cite{Deu}, there exists an
elliptic curve $E$ over $\F_p$ such that $\#E(\F_p)$ is a multiple of
$5$. Thus, $E(\F_p)/5E(\F_p)$ is nontrivial, and the nontrivial
elements correspond to elements in $V_{\F_p}^{\neq \rm{id}}$. Next,
note that the set $V_{\F_p}^{\neq \rm{id}}$ is closed under
multiplication by nonzero elements of $\F_p$ and under the action of~$G_{\F_p}$. Therefore, we have $\#V_{\F_p}^{\neq \rm{id}}\gg
p\#G_{\F_p}\gg\#V_{\F_p}/p$. It follows that for any $Y>0$, we have
\begin{equation}
\begin{array}{rcl}
\displaystyle\int_{na\in\FF}\#\{v\in B^\pm(n,a;X)\cap
V_\Z^\red:a_{12}(v)\neq 0\}dnd^\ast
a&=&O\Bigl(X^{5/6}\displaystyle\prod_{p<Y}\bigl(1-\frac{\#V_{\F_p}^{\neq
    \rm{id}}}{\#V_{\F_p}}\bigr)\Bigr)\\[.2in]&=&O\Bigl(X^{5/6}\displaystyle\prod_{p<Y}\bigl(1-\frac1p\bigr)\Bigr).
\end{array}
\end{equation}
The proposition now follows by letting $Y$ tend to infinity. $\Box$

\begin{lemma}\label{lemtotred}
  Let $V_\Z^{{\rm bigstab}}\subset V_\Z$ be the set of elements that are
  strongly irreducible and have a nontrivial stabilizer in $G_\Q$.
  Then we have
$$N(V_\Z^{{\rm bigstab}};X)=o(X^{5/6}).$$
\end{lemma}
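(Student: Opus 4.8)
The plan is to mimic the structure of the proof of Proposition~\ref{propred} just completed, but now tracking orbits with nontrivial $G_\Q$-stabilizer instead of reducible orbits. By Theorem~\ref{leme2ep}, for $v\in V_\Z$ with invariants $I,J$ and nonzero discriminant, $\Stab_{G_\Q}(v)\cong E^{I,J}(\Q)[5]$; so $v$ has nontrivial $G_\Q$-stabilizer precisely when $E^{I,J}$ has a rational $5$-torsion point, equivalently $E^{I,J}(\Q)[5]\neq 0$. Thus $V_\Z^{\rm bigstab}$ is contained in the set of $v\in V_\Z$ whose associated elliptic curve $E^{I(v),J(v)}$ has a rational point of order $5$. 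So it suffices to show that the number of $G_\Z$-orbits of strongly irreducible $v\in V_\Z^\pm$ with $H(v)<X$ lying over such curves is $o(X^{5/6})$.

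First I would reduce to a statement about pairs $(I,J)$: since $N(V_\Z^\pm;X)=O(X^{5/6})$ by Theorem~\ref{thsec3main}, and more precisely the proof shows each orbit of height $<X$ contributes to a pair $(I,J)$ with $H(I,J)<X$, it is enough to bound the total number of orbits lying over the (relatively sparse) set of pairs $(I,J)$ with $E^{I,J}(\Q)[5]\neq 0$. There are two ways to proceed. The cleaner one: apply the congruence sieve of Theorem~\ref{cong2} (or the acceptable-weight version, Theorem~\ref{thsqfree}) with the weight function that is supported on $v$ whose reduction mod $p$ lies over a curve in $V_{\F_p}$ with a rational $5$-torsion point, for a growing set of auxiliary primes $p$. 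Among elliptic curves over $\F_p$ with $p\nmid\Delta$, the proportion having a rational $5$-torsion point is $O(1/p)$ — indeed the curves with $E(\F_p)[5]\neq 0$ correspond to $\#E(\F_p)\equiv 0\pmod 5$, and by a standard equidistribution/class-number-type count (or simply by noting $X_1(5)$ has positive genus$\,{-}1$, i.e. the modular curve $X_1(5)$ has only finitely many $\F_p$-points relative to the $j$-line, so the density is $O(1/p)$) this local density is $\leq c/p$ uniformly. Then the Euler product $\prod_{p<Y}(1-c/p)\to 0$ as $Y\to\infty$, exactly as in the proof of Proposition~\ref{propred}, giving $N(V_\Z^{\rm bigstab};X)=O(X^{5/6}\prod_{p<Y}(1-c/p))+o_Y(X^{5/6})$ and hence $o(X^{5/6})$ on letting $Y\to\infty$.

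The main obstacle is establishing the uniform-in-$p$ bound $O(1/p)$ for the density of $v\in V_{\F_p}$ (with nonzero discriminant) lying over a curve with a rational $5$-torsion point: one must argue that the number of $j$-invariants over $\F_p$ with a rational point of order $5$ is $O(1)$ (bounded independently of $p$, coming from $\F_p$-points of $X_1(5)$ mapping to the $j$-line), and then that for each such curve the number of $v\in V_{\F_p}$ over it is $O(\#G_{\F_p})=O(p^{24})$ while $\#V_{\F_p}\sim p^{25}$, so the fiber over the bad locus has density $O(1/p)$. Here one uses that $E(\F_p)/5E(\F_p)$ has size bounded independently of $p$ (it is a quotient of $E(\F_p)[5]$, hence at most $25$), so each such curve contributes $O(1)$ times $\#G_{\F_p}$ many points. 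Care is needed at the finitely many $p$ (namely $p=2,3,5$) where Theorem~\ref{leme2ep} does not apply; these contribute nothing to the limit since we only need $\prod_{p<Y}$ over $p>5$ to tend to $0$, which it does. An alternative to the sieve, avoiding any appeal to $X_1(5)$, is to directly parametrize elliptic curves with a rational $5$-torsion point: such $E_{A,B}$ form a one-parameter family (the Tate normal form for $5$-torsion), so the number of pairs $(I,J)$ with $H(I,J)<X$ and $E^{I,J}(\Q)[5]\neq 0$ is $O(X^{c})$ for some $c<5/6$; combined with the fact (implicit in \S\ref{s31}, via Lemma~\ref{lemstabsize}) that the number of strongly irreducible $G_\Z$-orbits over a fixed $(I,J)$ of height $<X$ is $O(X^\epsilon)$ — or more crudely that the count of all orbits of bounded height grows like $X^{5/6}$ and is spread over $\asymp X^{5/6}$ pairs — this again yields $N(V_\Z^{\rm bigstab};X)=o(X^{5/6})$. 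I would present the sieve argument as the main proof and remark on the parametrization alternative.
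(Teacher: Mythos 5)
Your overall strategy --- note that a nontrivial $G_\Q$-stabilizer forces $E^{I,J}(\Q)[5]\neq 0$, hence (by injectivity of prime-to-$p$ torsion under good reduction) $E^{I,J}(\F_p)[5]\neq 0$ at all primes $p>5$ of good reduction, and then sieve mod $p$ --- is the same as the paper's, and with the correct local input it works.  However, the local input you give is factually wrong, and your own appeal to $X_1(5)$ proves the opposite of what you assert.  You claim that the proportion of elliptic curves over $\F_p$ with $E(\F_p)[5]\neq 0$ is $O(1/p)$.  In fact $X_1(5)$ has genus $0$, so $\#X_1(5)(\F_p)=p+O(1)$, and the forgetful map to the $j$-line has bounded degree; thus a \emph{positive proportion} (roughly $1/5$, by an $a_p\bmod 5$ equidistribution / class-number count) of isomorphism classes of elliptic curves over $\F_p$ have a rational $5$-torsion point.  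So the set of ``bad'' $\bar v\in V_{\F_p}$ lying over such curves has density bounded away from $0$, not $O(1/p)$.  (The parenthetical ``positive genus$\,{-}1$'' is a slip; $X_1(5)$ is rational.)

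What is actually needed --- and what your Euler product $\prod_{p<Y}(1-c/p)$ implicitly uses --- is only the much weaker statement that the ``good'' set $V_{\F_p}^{\rm smallstab}$, consisting of $\bar v$ for which $E^{I(\bar v),J(\bar v)}(\F_p)[5]=0$, has density $\gg 1/p$, so that the complementary allowed set has density $\leq 1-c/p$.  The paper obtains this not by any counting over $X_1(5)$ but by a one-line argument: by Deuring there exists some $E/\F_p$ with $5\nmid\#E(\F_p)$; the identity element of $E(\F_p)/5E(\F_p)$ gives a single point of $V_{\F_p}^{\rm smallstab}$; and this set is stable under the $G_{\F_p}$-action and scalar multiplication, so has size $\geq p\,\#G_{\F_p}\gg\#V_{\F_p}/p$.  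Then one proceeds exactly as in Proposition~\ref{propred} (after first invoking Proposition~\ref{propaneq0} to dispose of the $a_{12}=0$ locus --- a reduction you do not mention).  So: keep the sieve, but replace your upper bound on the bad set with the paper's lower bound on the good set via Deuring; as written, your density claim is false, and your Euler factor $1-c/p$ does not follow from it.

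Your proposed alternative --- bounding the number of $(I,J)$ with $H(I,J)<X$ and $E^{I,J}(\Q)[5]\neq 0$ (via the Tate normal form / one-parameter family), then multiplying by a bound on orbits per $(I,J)$ --- is not carried out and has a real gap: the crude observation that the $\asymp X^{5/6}$ orbits are ``spread over'' $\asymp X^{5/6}$ pairs $(I,J)$ does not preclude concentration on a thin subfamily, and Lemma~\ref{lemstabsize} (a statement about $G_\R$-stabilizers) gives no upper bound on the number of strongly irreducible $G_\Z$-orbits over a fixed $(I,J)$.  The sieve route is the one to use.
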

\begin{proof}
First, we note that by Proposition~\ref{propaneq0}, it suffices to prove
the estimate (\ref{eqprop22}) with $V_\Z^\red$ replaced by
$V_\Z^{\rm bigstab}$. 

If $v\in V_\Z$ has a nontrivial stabilizer in $G_\Q$, then we see from
Theorem~\ref{leme2ep} that $E^{I(v),J(v)}(\Q)[5]$ must be
nontrivial. If furthermore $E^{I(v),J(v)}$ has good reduction at
$p>5$, then it follows by \cite[\S VII, Proposition~3.1]{Sil} that 
$E^{I(v),J(v)}(\F_p)[5]$ must also be nontrivial. 
Therefore, if $\bar{v}\in V_{\F_p}$ is an element having
nonzero discriminant such that $E^{I(\bar{v}),J(\bar{v})}(\F_p)[5]$ is
trivial, then any strongly irreducible $v\in V_\Z$ that reduces to
$\bar{v}$ modulo $p$ must have trivial stabilizer in $G_\Q$. Denote
the set of all such $\bar{v}\in V_{\F_p}$ by
$V_{\F_p}^{\rm{smallstab}}$. 

The set $V_{\F_p}^{\rm{smallstab}}$ is nonempty because there exists
an elliptic curve $E$ over $\F_p$ such that $\#E(\F_p)$ is prime to
$5$, again by \cite{Deu}; the identity element of
$E(\F_p)/5E(\F_p)$ then corresponds to an element in
$V_{\F_p}^{\rm{smallstab}}$.
The rest of the proof now proceeds identically to that of
Proposition \ref{propred}.
\end{proof}

\section{The average number of elements in the $5$-Selmer groups of elliptic curves}

Let $E$ be an elliptic curve over $\Q$, and define the
invariants $I(E)$ and $J(E)$ of $E$ as in~(\ref{eqEIJ}).
Throughout this section, we work with the slightly different height $H'$ on
elliptic curves $E$, defined by
\begin{equation}\label{eqEH}
  H'(E):=\max(|I(E)|^3,J(E)^2/4),
\end{equation}
so that the height on elliptic curves agrees with the height on $V_\Z$ defined in
\eqref{heightvz}. Note that since the heights $H$ and $H'$ on elliptic
curves differ only by a
constant factor, they induce the same ordering on the set of all
(isomorphism classes of) elliptic curves over $\Q$.  

In this section, we prove Theorem \ref{mainellip}
by averaging the size of the $5$-Selmer group of all elliptic curves
over $\Q$, when these curves are ordered by height. In fact, we prove a
generalization of these theorems that allows us to impose certain
infinite sets of congruence conditions on the defining equations of
the elliptic curves. To state this more general theorem, we need the
following definitions.

For each prime $p$, let $\Sigma_p$ be a closed subset of
$\Z_p^2\backslash\{\Delta=0\}$ whose boundary has measure~$0$. To this
collection $(\Sigma_p)_p$, we associate the family $F_\Sigma$ of elliptic curves,
such that $E^{I,J}\in F_\Sigma$ whenever $(I,J)\in \Sigma_p$ for all
$p$. Such a family of elliptic curves over $\Q$ is said to be {\it
  defined by congruence conditions}. We may also impose ``congruence
conditions at infinity'' on $F_\Sigma$ by insisting that an elliptic
curve $E^{I,J}$ belongs to $F_\Sigma$ if and only if $(I,J)$ belongs
to $\Sigma_\infty$, where $\Sigma_\infty$ is equal to
$\{(I,J)\in\R^2:\Delta(I,J)>0\}$, $\{(I,J)\in\R^2:\Delta(I,J)<0\}$, or
$\{(I,J)\in\R^2:\Delta(I,J)\neq 0\}$.

For a family $F$ of elliptic curves defined by congruence conditions,
let $\Inv(F)$ denote the set $\{(I,J)\in\Z\times\Z:E^{I,J}\in F\}$,
and $\Inv_p(F)$ the $p$-adic closure of $\Inv(F)$ in
$\Z_p^2\backslash\{\Delta=0\}$. We define $\Inv_\infty(F)$ 
to be 
$\{(I,J)\in\R^2:\Delta(I,J)>0\}$,
$\{(I,J)\in\R^2:\Delta(I,J)<0\}$, or $\{(I,J)\in\R^2:\Delta(I,J)\neq
0\}$ in accordance with whether $F$ contains only curves of positive
discriminant, negative discriminant, or both, respectively.
Such a family $F$ of elliptic curves is said to be
{\it large} if, for all but finitely many primes $p$, the set
$\Inv_p(F)$ contains at least those pairs $(I,J)\in\Z_p\times\Z_p$
such that $p^2\nmid\Delta(I,J)$. Our purpose in this
section is to prove the following theorem, which extends Theorem~\ref{mainellip} to
more general congruence families of elliptic curves:

\begin{theorem}\label{ellipall}
  Let $F$ be a large family of elliptic curves. When elliptic
  curves $E$ in $F$ are ordered by height, the average size of the
  $5$-Selmer group $S_5(E)$ is equal to $6$.
\end{theorem}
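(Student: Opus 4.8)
The plan is to count, via the results of Section~3, the total size of the $5$-Selmer groups of all elliptic curves $E \in F$ of bounded height, and then divide by the number of such curves. By Theorem~\ref{propselparz}, each element of $S_5(E^{I,J})$ corresponds to a $G_\Z$-equivalence class of locally soluble elements in $V_\Z$ with invariants $I$ and $J$, and the identity element corresponds to the non-strongly-irreducible (soluble) orbits. So the sum $\sum_{E \in F,\ H'(E)<X} \#S_5(E)$ equals $\#\{E \in F : H'(E)<X\}$ (counting the identity contribution) plus the number of $G_\Z$-orbits of locally soluble, strongly irreducible elements of $\pnv$ with $(I,J)\in \Inv(F)$ and height $<X$. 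The first term is standard and contributes $1$ to the average; the real work is the second term.

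First I would set up the local conditions: being locally soluble is a condition at each place, and by Proposition~\ref{Fisherprop}, at all primes $p$ with $p^2 \nmid \Delta(v)$, the element $v \in V_{\Z_p}$ is automatically $\Q_p$-soluble and $G_{\Z_p}$-equivalence coincides with $G_{\Q_p}$-equivalence. Thus ``locally soluble'' is cut out by a congruence condition at the finitely many bad primes together with an $\R$-solubility condition (which by Lemma~\ref{lemalwaysrsol} is automatic), so I would encode it as an acceptable weight function $\phi = \prod_p \phi_p$, where $\phi_p(v)$ is the reciprocal of the number of $G_{\Z_p}$-orbits in the $G_{\Q_p}$-orbit of $v$ lying in $V_{\Z_p}$ if $v$ is $\Q_p$-soluble (so that each locally soluble $G_\Q$-class is counted exactly once — here I invoke the class-number-one property of $G_\Q$), and intersect with $\Inv_p(F)$. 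Then Theorem~\ref{thsqfree} applies: the weighted count $N_\phi(\pnv;X)$ equals $N(\pnv;X)\prod_p \int_{V_{\Z_p}}\phi_p\,dv + o(X^{5/6})$. Combining with Theorem~\ref{thsec3main}, which gives $N(\pnv;X) = |\J|\cdot\Vol(G_\Z\backslash G_\R)\cdot N^\pm(X) + o(X^{5/6})$, I reduce the whole problem to evaluating the product of local masses $|\J|\cdot\Vol(G_\Z\backslash G_\R)\cdot\prod_p \int_{V_{\Z_p}}\phi_p\,dv$, and likewise evaluating $\#\{E\in F:H'(E)<X\}$ as $N^\pm(X)\prod_p \mu_p(\Inv_p(F)) + o(X^{5/6})$.

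The crux is then a Tamagawa-number computation. One shows that the product $|\J|\cdot\Vol(G_\Z\backslash G_\R)\cdot\prod_p \int_{V_{\Z_p}}\phi_p\,dv$, with $\phi_p$ the locally-soluble weight, equals $5 \cdot \prod_p \mu_p(\Inv_p(F))$ — the factor $5$ being the Tamagawa number $\tau(G)$ of the group $G$. This is the heart of the argument and I expect it to be the main obstacle: it requires relating, prime by prime, the integral $\int_{V_{\Z_p}}\phi_p\,dv$ (a sum over $G_{\Q_p}$-orbits of $\Q_p$-soluble elements, which by Theorem~\ref{leme2ep} is parametrized by $E^{I,J}(\Q_p)/5E^{I,J}(\Q_p)$) to the local density $\mu_p(\Inv_p(F))$ times a local Tamagawa factor, using a local orbit-counting identity of the form $\int_{V_{\Z_p}}(\text{something})\,dv = |\J|_p^{-1}\int \#\big(E^{I,J}(\Q_p)/5E^{I,J}(\Q_p)\big)\,dI\,dJ \cdot \Vol(G_{\Z_p})$, and then invoking the product formula $\sum_v |E^{I,J}(\Q_v)/5E^{I,J}(\Q_v)|$-type relation together with $|E(\Q_v)[5]| = |E(\Q_v)/5E(\Q_v)|$ for $v \neq 5, \infty$ (and the global product formula for the Cassels–Tate-style pairing / the fact that the product of local Tamagawa measures is $\tau(G)=5$). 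Concretely I would follow the template of \cite{BS}: verify the identity $\prod_v |\J|_v = |\J|^{-1}$, compute $\Vol(G_\Z\backslash G_\R)\prod_p \Vol(G_{\Z_p})$ against $\tau(G)$, and check that at the finitely many bad primes the soluble-weight integral still matches $\mu_p(\Inv_p(F))$ up to the local factor — the ``large family'' hypothesis guaranteeing that at all but finitely many primes $\Inv_p(F)$ contains everything with $p^2\nmid\Delta$, so those primes contribute exactly the generic local factor and the Euler product converges.

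Finally I would assemble the pieces: the average size of $S_5(E)$ over $E \in F$ with $H'(E)<X$ is
\[
\frac{\#\{E\in F:H'(E)<X\} + N_\phi(\pnv;X) + o(X^{5/6})}{\#\{E\in F:H'(E)<X\}}
= 1 + \frac{5\cdot N^\pm(X)\prod_p\mu_p(\Inv_p(F)) + o(X^{5/6})}{N^\pm(X)\prod_p\mu_p(\Inv_p(F)) + o(X^{5/6})},
\]
which tends to $1 + 5 = 6$ as $X\to\infty$, using the Remark after Theorem~\ref{thsec3main} that $N^\pm(X) \asymp X^{5/6}$ so the error terms are genuinely lower order. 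This proves Theorem~\ref{ellipall}, and Theorem~\ref{mainellip} (and Theorem~\ref{ellipcong}) follow as the special cases $F = $ all elliptic curves (resp.\ any finite-congruence family).
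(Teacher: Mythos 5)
Your proposal is correct and follows essentially the same route as the paper's proof: reduce to a weighted orbit count via the Selmer parametrization, define a local-solubility weight $\phi=\prod_p\phi_p$ (with the $G_{\Z_p}$-orbit-counting reciprocal, acceptable by Proposition~\ref{Fisherprop} and the class-number-one fact), invoke the squarefree sieve (Theorem~\ref{thsqfree}) and the volume count (Theorem~\ref{thsec3main}), and then identify the resulting constant with the Tamagawa number $\tau(G)=5$ via the local mass evaluation (which in the paper is Proposition~\ref{propmasseval}, giving the extra factor of $5$ exactly at $p=5$). The only cosmetic differences are that the paper's $\phi_p$ carries stabilizer-correction factors $\#\Aut_{\Q_p}/\#\Aut_{\Z_p}$ (negligible by Lemma~\ref{lemtotred}) and you wrote $|\J|_p^{-1}$ where the paper's Jacobian identity yields $|\J|_p$, neither of which alters the structure or conclusion of the argument.
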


\subsection{Assigning weights to elements in $V_\Z$, and a local mass computation}
Let $F$ be a fixed large family of elliptic curves.
Recall that for an elliptic curve $E^{I,J}$, 
Proposition~\ref{propselparz} asserts that nontrivial elements in $S_5(E^{I,J})$
are in bijection with $G_\Q$-equivalence classes on the set of locally
soluble and strongly irreducible elements in $V_\Z$ having invariants $I$
and $J$. In order to use the counting results of Section~3 to prove Theorem
\ref{ellipall}, we need to define an appropriate weight function on $V_\Z$.

For $v\in V_\Z$, (resp.\ $v\in V_{\Z_p}$), let $B(v)$
(resp.\ $B_p(v)$) denote a set of representatives for the action of
$G_\Z$ (resp.\ $G_{\Z_p}$) on the $G_\Q$-equivalence class of $v$ in
$V_\Z$ (resp.\ the $G_{\Q_p}$-equivalence class of $v$ in
$V_{\Z_p}$). We define our weight function $\phi$ via:
\begin{equation}\label{eqmx}
  \phi(v):=
  \begin{cases}
    \Bigl(\displaystyle\sum_{v'\in B(v)}\frac{\#\Aut_\Q(v')}{\#\Aut_\Z(v')}\Bigr)^{-1} &\text{if $v$ is locally soluble and $(I(v),J(v))\in \Inv_p(F)$ for all $p$;}\\[.1in]
    \qquad\qquad 0 &\text{otherwise},
  \end{cases}
\end{equation}
where $\Aut_\Q(v)$ and $\Aut_\Z(v)$ denote the stabilizers of $v\in
V_\Z$ in $G_\Q$ and in $G_\Z$, respectively. Since Lemma \ref{lemtotred}
states that $\Aut_\Q(v)$ is trivial for all but a negligible set of 
elements $v\in V_\Z$, the function $\phi$ also satisfies the following
three conditions at all but a negligible set of $v$:
\begin{enumerate}
\item If $v\in V_\Z$ is not locally soluble, then $\phi(v)$ is zero.
\item If $(I(v),J(v))$ is not in $\Inv(F)$, then $\phi(v)$ is zero.
\item Otherwise, $\phi(v)$ is the reciprocal of the number of
  $G_\Z$-orbits in the $G_\Q$-equivalence class of $v$ in $V_\Z$.
\end{enumerate}

For the application of Theorem \ref{cong3} to counting $G_\Z$-orbits
on $V_\Z$ weighted by $\phi$, we need to define the following local
weight functions $\phi_p:V_{\Z_p}\to\R_{\geq 0}$:
\begin{equation}\label{eqmxp}
  \phi_p(v):=
  \begin{cases}
    \Bigl(\displaystyle\sum_{v'\in B_p(v)}\frac{\#\Aut_{\Q_p}(v')}{\#\Aut_{\Z_p}(v')}\Bigr)^{-1} &\text{if $v$ is $\Q_p$-soluble and $(I(v),J(v))\in \Inv_p(F)$;}\\[.1in]
    \qquad\qquad 0 &\text{otherwise},
  \end{cases}
\end{equation}
where $\Aut_{\Q_p}(v)$ and $\Aut_{\Z_p}(v)$ denote the stabilizer of
$v\in V_{\Z_p}$ in $G_{\Q_p}$ and $G_{\Z_p}$, respectively. We then 
have the following proposition.
\begin{proposition}\label{propprod}
  If $v\in V_\Z$ has nonzero discriminant, then $\phi(v)=\prod_p\phi_p(v)$.
\end{proposition}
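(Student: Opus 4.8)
The plan is to prove the identity $\phi(v)=\prod_p\phi_p(v)$ by relating both sides to a count of $G_\Z$-orbits (resp.\ $G_{\Z_p}$-orbits) inside a single $G_\Q$-equivalence class, and then invoking the fact that $G$ has class number $1$. First I would note that both sides are zero unless $v$ is locally soluble and $(I(v),J(v))\in \Inv_p(F)$ for all $p$: if $v$ fails local solubility at some place, then $\phi(v)=0$ by definition, and $\phi_p(v)=0$ at that place, so the identity holds trivially; similarly for the congruence condition defining $F$. So I would reduce to the case where $v$ is locally soluble and lies in the relevant family, in which case both $\phi(v)^{-1}$ and each $\phi_p(v)^{-1}$ are the finite weighted sums over $B(v)$ and $B_p(v)$ respectively.

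Next I would reinterpret the weighted sums. By Theorem~\ref{leme2ep} (applied over $\Q_p$ and over $\Q$), the stabilizers $\Aut_{\Q_p}(v')$ and $\Aut_{\Q}(v')$ depend only on the common invariants $I,J$ — they are isomorphic to $E^{I,J}(\Q_p)[5]$ and $E^{I,J}(\Q)[5]$, so in particular $\#\Aut_{\Q_p}(v')$ is independent of the choice of $v'$ in the class, and likewise $\#\Aut_{\Q}(v')$. Thus $\phi_p(v)^{-1}=\#\Aut_{\Q_p}(v)\cdot \sum_{v'\in B_p(v)}\#\Aut_{\Z_p}(v')^{-1}$, and the sum here is exactly the local orbit-counting mass $\sum_{v'}1/\#\Stab_{G_{\Z_p}}(v')$ over $G_{\Z_p}$-orbits in the $G_{\Q_p}$-class of $v$. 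The standard mass-formula bookkeeping (as in \cite[\S3]{BS}) identifies this local mass with a ratio of the form $\#\Aut_{\Z_p}(v)^{-1}\cdot(\text{number of }\Z_p\text{-points in the }\Q_p\text{-orbit, suitably weighted})$; alternatively, one can package it as $|\Stab_{G_{\Q_p}}(v)|^{-1}$ times the number of $G_{\Z_p}\backslash(G_{\Q_p}\cdot v\cap V_{\Z_p})$ cosets, weighted by stabilizer. For almost all $p$ — namely those where $p^2\nmid\Delta(v)$ — Proposition~\ref{Fisherprop} tells us this local mass is exactly $1$ (there is a single $G_{\Z_p}$-orbit and $\Stab_{G_{\Q_p}}(v)=\Stab_{G_{\Z_p}}(v)$), so $\phi_p(v)=1$ there and the product $\prod_p\phi_p(v)$ is a finite product.

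The heart of the argument is then a class-number-one / local-global bijection: since $G_\Q$ has class number $1$, the double-coset set $G_\Z\backslash G_\Q / \prod_p G_{\Z_p}$ is trivial, and consequently the set of $G_\Z$-orbits in $G_\Q\cdot v\cap V_\Z$ is in natural bijection with $\prod_p$ (set of $G_{\Z_p}$-orbits in $G_{\Q_p}\cdot v\cap V_{\Z_p}$), compatibly with stabilizers. Concretely, an element $v'\in V_\Z$ in the $G_\Q$-class of $v$ is determined up to $G_\Z$-equivalence by the tuple of its $G_{\Z_p}$-classes in $V_{\Z_p}$, and every such tuple (trivial for almost all $p$) arises. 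Tracking stabilizers through this bijection — using that $\Aut_{\Q}(v')\hookrightarrow \prod_p \Aut_{\Q_p}(v')$ with the local and global stabilizer sizes controlled by Theorem~\ref{leme2ep} — converts the global mass $\sum_{v'\in B(v)}\#\Aut_\Q(v')/\#\Aut_\Z(v')$ into the product over $p$ of the local masses $\sum_{v'\in B_p(v)}\#\Aut_{\Q_p}(v')/\#\Aut_{\Z_p}(v')$. Taking reciprocals gives $\phi(v)=\prod_p\phi_p(v)$.

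The main obstacle I expect is the careful stabilizer bookkeeping in the class-number-one step: one must check that under the bijection $G_\Z\backslash(G_\Q\cdot v\cap V_\Z)\;\leftrightarrow\;\prod_p G_{\Z_p}\backslash(G_{\Q_p}\cdot v\cap V_{\Z_p})$ the ratio $\#\Aut_\Q(v')/\#\Aut_\Z(v')$ genuinely factors as $\prod_p \#\Aut_{\Q_p}(v'_p)/\#\Aut_{\Z_p}(v'_p)$, which relies on $\Aut_\Q(v')=\bigcap_p \Aut_{\Q_p}(v')$ inside the ambient group (so the global rational stabilizer is exactly the intersection of the local ones — a consequence of $\Aut_{\Q}(v')\cong E^{I,J}(\Q)[5]=\bigcap_p E^{I,J}(\Q_p)[5]$, since a rational $5$-torsion point is one that is $p$-integral and $5$-torsion everywhere) together with the fact that $\Aut_\Z(v')=\bigcap_p \Aut_{\Z_p}(v')$. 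Once these intersection identities are in place, the factorization of the mass is a formal manipulation, essentially the same bookkeeping as in \cite[\S3]{BS} and \cite[\S4]{dodpf}, and the proof concludes.
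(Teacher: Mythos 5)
Your proposal is correct and takes essentially the same route as the paper, which in fact proves this proposition by simply citing that it is ``identical to that of \cite[Proposition~3.6]{BS}'' after noting that $G_\Q$ has class number one; your sketch reconstructs exactly the class-number-one local-to-global orbit comparison and stabilizer bookkeeping (with finiteness of the product coming from Proposition~\ref{Fisherprop}) that underlies that cited argument. The only loose spot is the phrasing of a ``natural bijection'' between $G_\Z\backslash(G_\Q\cdot v\cap V_\Z)$ and $\prod_p G_{\Z_p}\backslash(G_{\Q_p}\cdot v\cap V_{\Z_p})$ --- this is not literally a bijection when stabilizers are nontrivial, and it is precisely the $\#\Aut_\Q/\#\Aut_\Z$ weights that make the masses (rather than raw orbit counts) factor --- but you flag this bookkeeping yourself and defer it to \cite{BS}, just as the paper does.
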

Noting the fact that the group $G_\Q$ has class number one, the proof
of the above proposition is identical to that of \cite[Proposition~3.6]{BS}.

We end the subsection with a proposition that evaluates $\int_{V_{\Z_p}}\phi_p(v)dv$.
\begin{proposition}\label{propmasseval}
  We have
  \begin{equation*}
    \begin{array}{rl}
    \displaystyle\int_{v\in V_{\Z_p}}\phi_p(v)dv&=|\J|_p\cdot\Vol(G_{\Z_p})\cdot\displaystyle\int_{(I,J)\in \Inv_p(F)}\displaystyle\frac{\#(E^{I,J}(\Q_p)/5E^{I,J}(\Q_p))}{\#(E^{I,J}(\Q_p)[5])}dv\\[.25in]&=
    \begin{cases}
      \phantom{55\cdot}\!|\J|_p\cdot\Vol(G_{\Z_p})\cdot\Vol(\Inv_p(F))\quad\text{if $p\neq 5$;}\\[.1in]
      5\cdot|\J|_p\cdot\Vol(G_{\Z_p})\cdot\Vol(\Inv_p(F))\quad\text{if $p=5$}.
    \end{cases}
  \end{array}
  \end{equation*}
\end{proposition}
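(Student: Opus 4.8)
The plan is to evaluate $\int_{V_{\Z_p}}\phi_p(v)\,dv$ by unfolding the definition of $\phi_p$ and reorganizing the integral as a sum over $G_{\Q_p}$-orbits (equivalently, over $E^{I,J}(\Q_p)/5E^{I,J}(\Q_p)$ for each $(I,J)$). The key structural input is Proposition~\ref{propjac}: the nonzero rational constant $\J$ relates the Euclidean measure $dv$ on $V_\R$ to the product of Haar measure on $G_\R$ and $dI\,dJ$ on $R^\pm$. The identical statement holds $p$-adically, with $|\J|_p$ in place of $|\J|$ and $\Vol(G_{\Z_p})$ in place of $\Vol(G_\Z\backslash G_\R)$; this $p$-adic Jacobian identity is exactly the analogue proven in \cite[\S3]{BS} for the $2$-Selmer representation. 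So the first step is to invoke that $p$-adic change of variables: for a $G_{\Z_p}$-invariant function supported on elements with fixed invariants, integrating over $V_{\Z_p}$ breaks up as $|\J|_p$ times an integral over $(I,J)\in\Z_p^2$ of (a Haar-volume over the relevant $G_{\Q_p}$-orbits intersected with $V_{\Z_p}$) against $dI\,dJ$.

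Next I would do the orbit bookkeeping. Fix $(I,J)\in\Inv_p(F)$ with $p^2\nmid\Delta$ (the generic case, which is all that matters since the boundary has measure zero and $F$ is large). By Proposition~\ref{Fisherprop}, every $v\in V_{\Z_p}$ with these invariants is $\Q_p$-soluble, $G_{\Z_p}$-equivalence coincides with $G_{\Q_p}$-equivalence on such $v$, and stabilizers agree over $\Z_p$ and $\Q_p$; hence $B_p(v)=\{v\}$ and $\phi_p(v)=\#\Aut_{\Z_p}(v)/\#\Aut_{\Q_p}(v)=1$, but more to the point, summing the \emph{unweighted} count of $G_{\Z_p}$-orbits over a fixed $(I,J)$ gives $\#(E^{I,J}(\Q_p)/5E^{I,J}(\Q_p))$ by Theorem~\ref{leme2ep}, while each orbit carries Haar-volume $\Vol(G_{\Z_p})/\#\Aut_{\Z_p}(v)=\Vol(G_{\Z_p})/\#(E^{I,J}(\Q_p)[5])$. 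Assembling: the mass of the fiber over $(I,J)$ is $\Vol(G_{\Z_p})\cdot\#(E^{I,J}(\Q_p)/5E^{I,J}(\Q_p))/\#(E^{I,J}(\Q_p)[5])$, which after integrating over $\Inv_p(F)$ and multiplying by $|\J|_p$ yields the first displayed equality. (For the finitely many bad primes and the measure-zero set $p^2\mid\Delta$, one checks the contribution is accounted for correctly; since $F$ is large this affects only a negligible portion, and the weight $\phi_p$ was defined precisely to make the $G_{\Z_p}$-orbit count come out right even there—this is the local analogue of the argument behind Proposition~\ref{propprod}.)

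For the second equality I would invoke the local Euler-characteristic formula for $E/\Q_p$. By the standard computation (Tate's local duality, as in \cite[Lemma 3.?]{BS} or Cassels's formulas), the ratio $\#(E(\Q_p)/5E(\Q_p))/\#(E(\Q_p)[5])$ equals $\#(\Z_p/5\Z_p)^{-1}\cdot\#(E(\Q_p)[5])^{-1}\cdot\#(E(\Q_p)/5E(\Q_p))$... more precisely, $\#(E(\Q_p)/nE(\Q_p))=\#(E(\Q_p)[n])\cdot|n|_p^{-1}$ for the local point group (from the snake lemma applied to multiplication by $n$ on the exact sequence $0\to\hat{E}\to E(\Q_p)\to E(\Q_p)/\hat{E}\to 0$ and the structure of $E(\Q_p)$ as a compact abelian Lie group). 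Thus the ratio $\#(E^{I,J}(\Q_p)/5E^{I,J}(\Q_p))/\#(E^{I,J}(\Q_p)[5])$ equals $|5|_p^{-1}$, which is $1$ when $p\neq 5$ and $5$ when $p=5$—independent of $(I,J)$. Pulling this constant out of the integral leaves $\Vol(\Inv_p(F))$, giving the stated two-case formula.

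The main obstacle I anticipate is not any single step but the careful treatment of the locus $p^2\mid\Delta(I,J)$ together with the finitely many primes where $\Inv_p(F)$ is not the full complement of that locus: there one cannot invoke Proposition~\ref{Fisherprop}, so $B_p(v)$ may be larger than $\{v\}$, stabilizers may differ, and local solubility is a genuine condition. The resolution is that $\phi_p$ was engineered so that $\int_{V_{\Z_p}}\phi_p\,dv$ counts each $G_{\Q_p}$-orbit of $\Q_p$-soluble elements with weight $1/\#\Aut_{\Q_p}$—i.e., Haar-volume $\Vol(G_{\Q_p}\cdot v\cap V_{\Z_p})$ distributed so the total per orbit is the Tamagawa-style contribution—and the formula $\#(E(\Q_p)/5E(\Q_p))/\#(E(\Q_p)[5])=|5|_p^{-1}$ holds at \emph{all} primes and all $(I,J)$ with $\Delta\neq0$, so the bad locus contributes exactly what the formula predicts once integrated; since it has measure zero in the $(I,J)$ direction relative to $\Vol(\Inv_p(F))$ only at the generically-full primes (and is handled exactly at the finitely many exceptional primes), the final answer is as stated. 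I would point the reader to \cite[Proposition~3.9]{BS} as the template, noting that the only change is $n=2\rightsquigarrow n=5$ and the corresponding change of representation $V$.
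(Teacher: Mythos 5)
Your proof follows essentially the same route as the paper's: the paper proves the first equality by citing the argument of \cite[Proposition 3.9]{BS} (the $p$-adic Jacobian change of variables plus orbit/stabilizer bookkeeping, using Proposition~\ref{Fisherprop} for the generic locus $p^2\nmid\Delta$), and the second by citing \cite[Lemma 3.1]{BK}, which is precisely the identity $\#(E(\Q_p)/5E(\Q_p))/\#(E(\Q_p)[5])=|5|_p^{-1}$ you derive from the structure of $E(\Q_p)$. You have simply unfolded those two citations, and the details you supply (the orbit count equaling $\#(E^{I,J}(\Q_p)/5E^{I,J}(\Q_p))$ via Theorems~\ref{leme2ep} and~\ref{proplocalmin}, stabilizers agreeing over $\Z_p$ and $\Q_p$, and the handling of the measure-zero bad locus) are correct and match the intended argument.
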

The first equality in Proposition~\ref{propmasseval} follows from an
argument identical to \cite[Proposition 3.9]{BS}. The second follows
from an argument identical to the proof of \cite[Lemma 3.1]{BK}, which
shows that $\#(E^{I,J}(\Q_p)/5E^{I,J}(\Q_p))$ is equal to
$\#(E^{I,J}(\Q_p)[5])$ when~$p\neq 5$ and equal to
$5\#(E^{I,J}(\Q_p)[5])$ when~$p=5$.

\subsection{The proof of Theorem \ref{ellipall}}
Let $F$ be a large family of elliptic curves.  We start with the
following proposition that is proven in \cite[Theorem 3.17]{BS}.
\begin{proposition}\label{thcountellip}
Let $N(F^\pm,X)$ denote the number of
  elliptic curves in $F$ such that $H'(E)\leq X$ and $\Delta(E)\in\R^\pm$. Then
$$
N(F^\pm;X)=N^\pm(X)\displaystyle\prod_{p}\Vol(\Inv_p(F))+o(X^{5/6}).
$$
\end{proposition}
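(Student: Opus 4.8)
The plan is to deduce Proposition~\ref{thcountellip} as a straightforward counting of lattice points $(I,J)\in\Z\times\Z$ lying in a homogeneously expanding region of $\R^2$ and subject to the congruence conditions defining $F$, exactly paralleling \cite[Theorem~3.17]{BS}. First I would recall that $F=F_\Sigma$ is determined by the local conditions $\Inv_p(F)\subset\Z_p^2\setminus\{\Delta=0\}$, so that an elliptic curve $E^{I,J}$ with $H'(E^{I,J})\le X$ and $\pm\Delta(I,J)>0$ lies in $F$ precisely when $(I,J)\in\Inv_p(F)$ for every $p$. Thus $N(F^\pm;X)$ counts integer pairs $(I,J)$ in the planar region $\{(I,J):\max(|I|^3,J^2/4)\le X,\ \pm\Delta(I,J)>0\}$ satisfying these congruence conditions. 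Since $\deg I = 1$ and $\deg J$ is essentially governed by the weight $20$ vs.\ $30$ scaling, the region $\{\max(|I|^3,J^2/4)\le X\}$ is $\{|I|\le X^{1/3},\ |J|\le 2X^{1/2}\}$, which scales homogeneously in $X$; the boundary of each $\Sigma_p$ has measure zero by hypothesis.

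The key steps, in order, would be: \textbf{(1)} reduce to the case of finitely many congruence conditions plus a tail estimate — i.e., for a large family $F$, the conditions at all but finitely many primes $p$ are implied by $p^2\nmid\Delta(I,J)$, so one must show that the count of $(I,J)$ with $\max(|I|^3,J^2/4)\le X$ for which $p^2\mid\Delta(I,J)$ for some $p>M$ is $o(X^{5/6})$ uniformly, plus $O(X^{5/6}/M)$ or similar, letting $M\to\infty$; this is the squarefree-sieve input for the discriminant polynomial $\Delta(I,J)=(4I^3-J^2)/27$ on pairs of integers, which is classical (Davenport--Heilbronn type), and is exactly what \cite[\S 4]{BS} invokes. \textbf{(2)} For each fixed finite set of congruence conditions mod $N=\prod p^{a_p}$, count $(I,J)\bmod N$ in the allowed residues and multiply by the volume of the real region, getting $N^\pm(X)\cdot\prod_p\Vol(\Inv_p(F))$ up to the error from the boundary (measure zero) and the standard lattice-point error $O(X^{1/2})$ coming from the perimeter of the region. \textbf{(3)} Assemble these to get the stated asymptotic with error $o(X^{5/6})$. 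Since essentially all of this is recorded verbatim in \cite[Theorem~3.17]{BS} (the discriminant polynomial and height normalization here being identical to the one in \cite{BS}), the cleanest presentation simply cites that result.

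The main obstacle — or rather the only nontrivial point — is the uniformity in the squarefree sieve: one needs that $\#\{(I,J): H'(E^{I,J})\le X,\ p^2\mid\Delta(I,J)\text{ for some }p>M\}=O(X^{5/6}/M)+o_{M}(X^{5/6})$, so that the infinitely many congruence conditions defining a large family can be truncated. For the two-variable discriminant $4I^3-J^2$ this follows from the elementary observation that $p^2\mid 4I^3-J^2$ forces $(I,J)$ into $O(p)$ residue classes mod $p^2$ (for $p\nmid 6$, fixing $I$ the equation $J^2\equiv 4I^3$ has at most two solutions mod $p^2$ unless $p\mid I$), giving a count $\ll \sum_{M<p\ll X^{1/3}} X/p^2 + (\text{small }p\mid I\text{ contribution}) \ll X^{5/6}/M$ after the height bound $|I|\ll X^{1/3}$, $|J|\ll X^{1/2}$ is fed in; the terms with $p\gg X^{1/3}$ are handled by noting $\Delta(I,J)\ll X$ has boundedly many prime-square divisors exceeding $X^{1/3}$. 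This is exactly the computation carried out in \cite{BS}, so in the write-up I would state Proposition~\ref{thcountellip} as being proven there and note only that the height normalization $H'$ used here matches the one in \cite{BS} so the argument applies verbatim.

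\begin{proof}
This is \cite[Theorem 3.17]{BS}: the height $H'$ on elliptic curves defined in \eqref{eqEH} agrees with the height used in \cite{BS}, and an elliptic curve $E^{I,J}$ with $\pm\Delta(I,J)>0$ lies in $F$ precisely when $(I,J)\in\Inv_p(F)$ for all $p$. Thus $N(F^\pm;X)$ counts the integer pairs $(I,J)$ with $\max(|I|^3,J^2/4)\le X$, $\pm\Delta(I,J)>0$, and $(I,J)\in\Inv_p(F)$ for every $p$. Since $F$ is large, for all but finitely many $p$ the condition $(I,J)\in\Inv_p(F)$ is implied by $p^2\nmid\Delta(I,J)$, so the count may be truncated to finitely many congruence conditions at the cost of the tail estimate
$$\#\{(I,J):\textstyle\max(|I|^3,J^2/4)\le X,\ p^2\mid\Delta(I,J)\text{ for some }p>M\}=O(X^{5/6}/M)+o_M(X^{5/6}),$$
which holds because $p^2\mid 4I^3-J^2$ restricts $(I,J)$ to $O(p)$ residue classes modulo $p^2$ when $p\nmid 6I$, and $|I|\ll X^{1/3}$, $|J|\ll X^{1/2}$. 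For each remaining finite set of congruence conditions modulo $N$, the number of admissible $(I,J)$ in the region $\{\max(|I|^3,J^2/4)\le X,\ \pm\Delta>0\}$ equals the product of the real volume $N^\pm(X)$ and the $p$-adic densities $\prod_{p}\Vol(\Inv_p(F))$, up to an error of $O(X^{1/2})$ from the perimeter of the region and a contribution of measure zero from the boundaries of the $\Sigma_p$. Letting $M\to\infty$ yields
$$N(F^\pm;X)=N^\pm(X)\prod_p\Vol(\Inv_p(F))+o(X^{5/6}),$$
as claimed.
\end{proof}
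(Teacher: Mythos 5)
Your proof takes essentially the same approach as the paper: the paper's entire treatment of Proposition~\ref{thcountellip} is a citation to \cite[Theorem~3.17]{BS}, exactly as in your final paragraph, and your surrounding sketch of the two-variable lattice-point count with a Davenport--Heilbronn-type squarefree sieve on $\Delta(I,J)=(4I^3-J^2)/27$ is an accurate description of what that cited theorem does. Nothing to add.
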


The results of \S 2 and \S 4.1 imply that we have
\begin{equation}
  \sum_{\substack{E\in F^\pm\\H'(E)<X}}(\#S_5(E)-1)=N_{\phi}(V_\Z^\pm,X)+o(X^{5/6}).
\end{equation}
By Propositions~\ref{Fisherprop} and \ref{propprod}, it follows that $\phi$ is
acceptable. Therefore, we may use Theorem \ref{thsqfree} and
Proposition~\ref{propmasseval} to estimate
$N_\phi(V_\Z^\pm;X)$, obtaining
\begin{equation*}
\begin{array}{rcl}
  \displaystyle\lim_{X\to\infty}\displaystyle\frac{\displaystyle\sum_{\substack{E\in F^\pm\\H'(E)<X}}(\#S_5(E)-1)}{\displaystyle\sum_{\substack{E\in F^\pm\\H'(E)<X}}1}
&=&\displaystyle\lim_{X\to\infty}\frac{N(V_\Z^\pm,X)\cdot\displaystyle\prod_p\displaystyle\int_{v\in V_{\Z_p}}\phi_p(v)dv}{N^\pm(X)\cdot\displaystyle\prod_{p}\Vol(\Inv_p(F))}\\
&=&\displaystyle\lim_{X\to\infty}\frac{\frac15|\J|\Vol(G_\Z\backslash G_\R) N^\pm(X)\cdot5\displaystyle\prod_{p}|\J|_p\Vol(G_{\Z_p})\Vol(\Inv_p(F))}{N^\pm(X)\cdot\displaystyle\prod_{p}\Vol(\Inv_p(F))}\\
&=&\tau(G),
\end{array}
\end{equation*}
where $\tau(G)=5$ denotes the Tamagawa number of $G$. We have proven Theorem \ref{ellipall}, and hence also Theorems~\ref{mainellip} and \ref{ellipcong}.

\section{Families of elliptic curves with equidistributed root number}

In this section, our aim is to construct a union $F$ of large families of
elliptic curves in which exactly $50\%$ have root number $1$, and where the density of $F$
among all elliptic curves is large (indeed, $>55\%$).  

Recall that the root number $r(E)$ of an elliptic curve $E$ can be expressed as
a local product $r(E)=-\prod_pr_p(E)$, where $r_p(E)$ is the {\it
  local root number} of $E$ at $p$. 
Local root numbers of elliptic curves over $\Q$ were computed in \cite{Hal} and \cite{Rh}, and these computations will be key in our constructions. 
The local root number $r_p(E)$ of an elliptic curve $E$ having
multiplicative reduction at $p$ is $1$ or $-1$ depending on whether
the reduction of $E$ at $p$ is split or non-split, respectively. When
applying sieve methods, it can often be difficult to distinguish
between these two cases. For example, it is not known whether the sum
$\sum\mu(-4A^3-27B^2)$ of M\"obius function values over all pairs $(A,B)$ having height less than
$X$ is $o(X^{5/6})$, which has been the traditional approach to this
type of problem.\footnote{There has been progress on obtaining
  equidistribution of root numbers of elliptic curves in one-parameter
  families; see Helfgott \cite{Helfgott}.}
To circumvent this issue, we
take the indirect approach of working with $d(E)=\prod_{p}d_p(E)$
instead of $r(E)$, where
$$
\,d(E):=r(E)r(E_{-1}),\,\;
$$
$$
d_p(E):=r_p(E)r_p(E_{-1}).
$$
Here $E_{-1}$ denotes the quadratic twist of $E$ by $-1$.

In the rest of this section, we construct a finite union of large
families $F$ of elliptic curves such that every curve $E\in F$
satisfies $E_{-1}\in F$ and $d(E)=-1$. Since the height of $E$ is
equal to the height of $E_{-1}$, it follows that exactly $50\%$ of
elliptic curves in $F$ have root number $1$.
The definitions of $d(E)$ and $d_p(E)$ imply immediately that 
$d(E)=\prod_pd_p(E)$. Denoting $d_2(E)d_3(E)$ and $\prod_{p>3}d_p(E)$
by $d_6(E)$ and $d_{1/6}(E)$, respectively, it follows that
$d(E)=d_6(E)d_{1/6}(E)$. 

We use $\Delta_p(E)$ and $\Delta_{p'}(E)$ to denote
$p^{\nu_p(\Delta(E))}$ and $\Delta(E)/\Delta_p(E)$, respectively.  We
further denote $\Delta(E)/(\Delta_2(E)\cdot\Delta_3(E))$ by
$\Delta_{6'}(E)$. In the next proposition, we construct two families of
elliptic curves $E$, defined by finitely many congruence conditions
modulo powers of $2$ and $3$, in which we control $d_6(E)$ in terms of
$\Delta_{6'}(E)$.

\begin{proposition}\label{prop23sum}
  There exist two families $F_1$ and $F_2$ of elliptic curves
  $E_{A,B}$, defined by finitely many congruence conditions on $A$ and
  $B$ modulo powers of $2$ and $3$, such that:
\begin{itemize}
 \item[{\rm 1}] $d_6(E)\equiv\Delta_{6'}(E)\!\pmod{4}$ for $E\in F_1$;
  \item[{\rm 2}] $d_6(E)\not\equiv\Delta_{6'}(E)\!\pmod{4}$ for $E\in F_2$;
  \item[{\rm 3}] The density of $F_1$ is greater than $59.179\%$;
  \item[{\rm 4}] The density of $F_2$ is greater than $40.32\%$;
\item[{\rm 5}]  $F_1$ and $F_2$ are closed under twisting by $-1$.
\end{itemize}
\end{proposition}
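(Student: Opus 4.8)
The plan is to construct the families $F_1$ and $F_2$ essentially by brute force, by a finite computation over congruence classes of $(A,B)$ modulo suitable powers of $2$ and $3$, using the tables of local root numbers of Halberstadt~\cite{Hal} and Rohrlich~\cite{Rh}. The key observation is that $d_6(E)=d_2(E)d_3(E)=r_2(E)r_2(E_{-1})r_3(E)r_3(E_{-1})$ depends only on the behaviour of $E$ and its $-1$-twist at $2$ and $3$, which in turn is determined by $A$ and $B$ modulo some fixed powers $2^a$ and $3^b$ (one checks $a\le 8$ or so, $b\le 5$ suffices, following Halberstadt's case analysis). So $d_6$ is a locally constant function on an open dense subset of $\Z_2^2\times\Z_3^2$. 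Similarly, $\Delta_{6'}(E)\bmod 4$ — that is, the sign of the prime-to-$6$ part of the discriminant reduced mod $4$, equivalently whether $\Delta_{6'}(E)\equiv 1$ or $3\pmod 4$ — is determined by $A,B$ together with $\nu_2(\Delta)$ and $\nu_3(\Delta)$, hence again by $A$ and $B$ modulo fixed powers of $2$ and $3$ (since $\Delta=-16(4A^3+27B^2)$ and we are only extracting the $2$- and $3$-adic valuations and a residue mod $4$ of the complementary part).

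First I would fix integers $a,b$ large enough that both $d_6(E)$ and the class of $\Delta_{6'}(E)\bmod 4$ are constant on each congruence class of $(A,B)$ modulo $(2^a,3^b)$ on which $\Delta\ne 0$ and which avoids a measure-zero bad set; the admissible such congruence classes, together with the reduction tables, are a finite explicit list. For each such class $\mathcal C$ I would record the pair $\bigl(d_6(E),\ \Delta_{6'}(E)\bmod 4\bigr)$, which is a well-defined element of $\{\pm1\}\times\{1,3\}$, and declare $\mathcal C\subset \Inv(F_1)$ if these agree (identifying $-1$ with $3$ and $+1$ with $1$), and $\mathcal C\subset \Inv(F_2)$ otherwise. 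By construction $F_1$ and $F_2$ are defined by finitely many congruence conditions on $A$ and $B$ modulo $2^a$ and $3^b$, and properties (1) and (2) hold by definition. For property (5), note that $(A,B)\mapsto(-A,B)$ — wait: the quadratic twist $E_{-1}$ of $E_{A,B}$ is $E_{A,-B}$ up to the normalization; in any case twisting by $-1$ is an involution on the coefficient space that is compatible with reduction mod $2^a$ and mod $3^b$, and the defining conditions are manifestly symmetric under it because $d_6$ is visibly symmetric (it is $d_6(E)=d_6(E_{-1})$ from the formula $d(E)=r(E)r(E_{-1})$) and $\Delta_{6'}(E)=\Delta_{6'}(E_{-1})$ since $-1$-twisting does not change $|\Delta|$ up to $2$- and $3$-power factors. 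Thus I would need to check only that the relevant exponents of $2$ and $3$ in the minimal discriminant are unchanged by the twist, which follows from the standard formulas; hence $\Inv(F_i)$ is stable under the twist, giving (5).

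The remaining properties (3) and (4), the density bounds $>59.179\%$ and $>40.32\%$, are then a finite sum: the density of $F_i$ is $\sum_{\mathcal C\subset \Inv(F_i)} \mathrm{vol}(\mathcal C)$ where the volumes are the natural ones in $\Z_2^2\times\Z_3^2$ (each full congruence class mod $(2^a,3^b)$ has volume $2^{-2a}3^{-2b}$), and one simply tabulates which classes go into which family and adds up. Note $F_1$ and $F_2$ need not partition the space — there may be classes removed as part of the bad set — so one does not expect the densities to sum to $1$, and indeed $0.59179+0.4032 >1$ is impossible; rather the point is that each density individually exceeds the stated bound, and these bounds are what get used later (the $55.01\%$ of Theorem~\ref{fam} comes from combining $F_1,F_2$ with control of $d_{1/6}$ via a further sieve). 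I expect the main obstacle to be purely bookkeeping: correctly reading off the local root number $r_p(E)$ and $r_p(E_{-1})$ from the Halberstadt--Rohrlich tables in every Kodaira type at $2$ and $3$ — there are many cases (all the additive reduction types $\mathrm{II},\mathrm{III},\ldots,\mathrm{I}_n^*,\mathrm{II}^*,\ldots$, plus the split/non-split multiplicative dichotomy), and at $p=2$ the tables are notoriously intricate and depend on $A,B$ modulo fairly high powers of $2$. Organizing this as a verifiable finite check, and ensuring the chosen $a,b$ are genuinely large enough that both $d_6$ and $\Delta_{6'}\bmod 4$ are constant on each surviving class, is where the real work lies; everything after that is addition.
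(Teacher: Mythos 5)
Your proposal is correct and follows essentially the same approach as the paper: a finite case analysis over congruence classes of $(A,B)$ modulo bounded powers of $2$ and $3$ using the Halberstadt--Rohrlich local root number tables, declaring a class to lie in $F_1$ or $F_2$ according to whether $d_6\cdot\Delta_{6'}\equiv 1$ or $3\pmod 4$, and reading off the density bounds by summing volumes (with a small positive-density exceptional set omitted, which is why the densities do not sum to $1$ -- this set is not measure zero as your first paragraph suggests, though you correct this later). The paper organizes exactly this bookkeeping by factoring over the two primes: Tables~\ref{table2} and \ref{table3} tabulate, for $p=2$ and $p=3$ respectively, the densities of curves with each possible pair $(\Delta_{2'}\bmod 4, d_2)$ and $(\Delta_3\bmod 4, d_3)$, and then $F_1$, $F_2$ are built as unions of intersections $G_2(i,j)\cap G_3(k,\ell)$ according to the residue of $ijk\ell\bmod 4$.
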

\begin{proof}
Using the local root number computations at the prime $2$ in \cite[Table
  1]{Hal}, we construct families of elliptic curves $E$ in Table
\ref{table2} with prescribed values of $d_2(E)$ and
$\Delta_{2'}(E)\!\pmod{4}$. Similarly, we use the local root number
computations at the prime $3$ in \cite[Table 2]{Hal} to construct families of
elliptic curves $E$ in Table~\ref{table3} with prescribed values of
$d_3(E)$ and $\Delta_3(E)\!\pmod{4}$. We also compute the densities of
these families.  Tables \ref{table2} and \ref{table3} are to be read
as follows: each row except the last corresponds to a family of
elliptic curves of the form $y^2=x^3+Ax+B$ defined by congruence conditions
modulo powers of 2 and 3, respectively. These families are
disjoint. The first three columns describe the family by specifying
the condition that $A$ and $B$ must satisfy. The fourth column gives
the density of the family. The final four columns give the relative
density of elliptic curves $E$ within the family with prescribed
values of $\Delta(E)\!\pmod{4}$ and $d_p(E)$. In the final row we simply
sum the densities over all the other rows. For example, the first
row of Table \ref{table2} corresponds to the family of elliptic curves
satisfying $2\nmid A$ and $2\nmid B$. This family has density $1/4$
among all integer pairs $(A,B)$. The prime to $2$ part of the
discriminant of such elliptic curves is always $1$ modulo $4$ and
exactly three quarters of such elliptic curves satisfy
$d_2(E)=1$. Hence the final four entries of the first row are $3/4$,
$1/4$, $0$, and $0$. All the rows (apart from the last one) in both
tables can be read similarly.

For $(i,j)\in \{(1,1),(1,-1),(3,1),(3,-1)\}$, let $G_2(i,j)$ denote
the family of elliptic curves $E$ in Table \ref{table2} with
$(\Delta_{2'}(E)\!\pmod{4},d_2(E))=(i,j)$, and let $G_3(i,j)$ denote the
family in Table \ref{table3} with
$(\Delta_3(E)\!\pmod{4},d_3(E))=(i,j)$. The families $G_p(i,j)$ are
defined by finitely many congruence conditions modulo powers of $p$, and their
densities are listed in the final row of Tables \ref{table2} and
\ref{table3}. 

\begin{table}[t]
  \centering\renewcommand{\arraystretch}{1.2}
  \begin{tabular}{|c | c| c| c|c|c|c|c|}
\hline
$v_2(A)$&$v_2(B)$&Additional&Density&\multicolumn{4}{|c|}{$\mbox{Relative density with given }(\Delta_{2'}\!\!\pmod{4},d_2)$}\\
\cline{5-8}
&&Condition&&$(1,1)$&$(1,-1)$&$(3,1)$&$(3,-1)$\\
\hline
0&0&-&$2^{-2}$&$\frac34$&$\frac14$&0&0\\
\hline
$\geq 1$&0&-&$2^{-2}$&$\frac34$&$\frac14$&0&0\\
\hline
$\geq 1$&1&-&$2^{-3}$&$\frac12$&$\frac12$&0&0\\
\hline
0&$\geq 2$&-&$2^{-3}$&$\frac14$&$\frac14$&$\frac12$&0\\
\hline
1&2&-&$2^{-5}$&$\frac12$&$\frac12$&0&0\\
\hline
2&2&-&$2^{-6}$&$\frac12$&$\frac12$&0&0\\
\hline
$\geq 3$&2&-&$2^{-6}$&$1$&0&0&0\\
\hline
$\geq 1$&3&-&$2^{-5}$&$0$&$\frac34$&0&$\frac14$\\
\hline
$1$&$\geq 4$&-&$2^{-6}$&$\frac12$&0&$\frac12$&0\\
\hline
$3$&$\geq 4$&-&$2^{-8}$&$\frac7{16}$&$\frac7{16}$&$\frac1{16}$&$\frac1{16}$\\
\hline

$\geq 4$&4&-&$2^{-9}$&$0$&$1$&0&$0$\\
\hline

$2$&$\geq 5$&-&$2^{-8}$&$\frac12$&$0$&$\frac14$&$\frac14$\\
\hline
$\geq 4$&5&-&$2^{-10}$&$0$&$1$&0&$0$\\
\hline

0&1&$v_2(\Delta)=7$&$2^{-4}$&$\frac12$&0&0&$\frac12$\\
\hline
0&1&$v_2(\Delta)=8$&$2^{-5}$&$\frac14$&$\frac14$&$\frac12$&$0$\\
\hline
0&1&$v_2(\Delta)=9$&$2^{-6}$&$\frac12$&0&0&$\frac12$\\
\hline
0&1&$v_2(\Delta)=10$&$2^{-7}$&$\frac14$&$\frac14$&$\frac14$&$\frac14$\\
\hline
0&1&$v_2(\Delta)=11$&$2^{-8}$&$\frac14$&$\frac14$&$\frac14$&$\frac14$\\
\hline

2&4&$v_2(\Delta)=13$&$2^{-9}$&$\frac12$&0&0&$\frac12$\\
\hline
2&4&$v_2(\Delta)=14$&$2^{-10}$&$0$&$\frac12$&0&$\frac12$\\
\hline
2&4&$v_2(\Delta)=15$&$2^{-11}$&$\frac12$&0&$\frac12$&0\\
\hline

\multicolumn{3}{|c|}{Total}&$\geq .9946$&$\;\,\geq .5703\;\,$&$\;\geq .2814\;$&$\;\geq .0903\;$&$\geq .0524$\\
\hline

  \end{tabular}
\caption{Densities of elliptic curves having prescribed values of $\Delta_{2'}\pmod{4}$ and $d_2$}
\label{table2}
\end{table}

We now define the families $F_1$ and $F_2$ to be
\begin{equation}
\begin{array}{rcl}
F_1&:=&\displaystyle\bigcup_{\substack{i,j,k,\ell\\ijk\ell\equiv 1\!\!\!\!\!\pmod{4}}}\bigl(G_2(i,j)\cap G_3(k,\ell)\bigr),\\[0.4in]
F_2&:=&\displaystyle\bigcup_{\substack{i,j,k,\ell\\ijk\ell\equiv 3\!\!\!\!\!\pmod{4}}}\bigl(G_2(i,j)\cap G_3(k,\ell)\bigr).
\end{array}
\end{equation}
Since the first two conditions of the proposition are invariant under
twisting by $-1$, the final condition is easily satisfied by replacing
(if necessary) $F_1$ and $F_2$ by $F_1\cup \{E:E_{-1}\in F_1\}$ and
$F_2\cup\{E:E_{-1}\in F_2\}$, respectively. This concludes the proof
of the proposition.
\end{proof}

\begin{table}[t]
  \centering\renewcommand{\arraystretch}{1.2}
  \begin{tabular}{|c | c| c|c| c|c|c|c|}
\hline
$v_3(A)$&$v_3(B)$&Additional&Density&\multicolumn{4}{|c|}{Relative
  density with given $(\Delta_3\!\!\pmod{4},d_3)$}\\
\cline{5-8}
&&Condition&&$(1,1)$&$(1,-1)$&$(3,1)$&$(3,-1)$\\
\hline
0&$\geq 0$&-&$\frac{2}{3}$& 1&0&0&0\\
\hline
$\geq 2$& 0&-&$\frac{2}{3^3}$&0&0&$\frac13$&$\frac23$\\
\hline
$1$  &1&-&$\frac{4}{3^4}$&0&0&0&1\\
\hline
$\geq 2$& 1&-&$\frac{2}{3^4}$&0&0&0&$1$\\
\hline
1 &$\geq 2$&-&$\frac{2}{3^4}$&0&0&1&0\\
\hline
2  &2&-&$\frac{4}{3^6}$&1&0&0&0\\
\hline
$\geq 3$& 2&-&$\frac{2}{3^6}$&0&0&0&1\\
\hline
2  &$\geq 3$&-&$\frac{2}{3^6}$&0&1&0&0\\
\hline
$\geq 4$& 3&-&$\frac{2}{3^8}$&0&0&$\frac{1}{3}$&$\frac{2}{3}$\\
\hline
$3$& 4&-&$\frac{4}{3^9}$&0&0&0&1\\
\hline
$\geq 4$& 4&-&$\frac{2}{3^9}$&0&0&0&1\\
\hline
$3$& $\geq 5$&-&$\frac{2}{3^9}$&0&0&1&0\\
\hline
$4$& 5&-&$\frac{4}{3^{11}}$&1&0&0&0\\
\hline
$\geq 5$& 5&-&$\frac{2}{3^{11}}$&0&0&0&1\\
\hline

1&0&$v_3(\Delta)=3$&$\frac{2}{27}$&0&0&$\frac56$&$\frac16$\\
\hline
1&0&$v_3(\Delta)=4$&$\frac{4}{81}$&1&0&$0$&$0$\\
\hline
1&0&$v_3(\Delta)=5$&$\frac{4}{3^5}$&0&0&$\frac12$&$\frac12$\\
\hline
1&0&$v_3(\Delta)=2n$, $n\geq 3$&$\frac{1}{2\cdot3^4}$&1&0&0&0\\
\hline
1&0&$v_3(\Delta)=2n+1$, $n\geq 3$&$\frac{1}{2\cdot3^5}$&0&0&1&0\\
\hline

\multicolumn{3}{|c|}{Total}&$\geq .9993$&$\;\geq .7277\;$&$\;\geq .0027\;$&$\;\,\geq .1216\;\,$&$\geq .1472$\\
\hline

  \end{tabular}
\caption{Densities of elliptic curves having prescribed values of $\Delta_3\pmod{4}$ and $d_3$.}
\label{table3}
\end{table}

We next prove a result analogous to Proposition \ref{prop23sum}, but where
we now instead control the value of $d_{1/6}$ in terms of the absolute value of
$\Delta_{6'}$. To this end, we have the following lemma.
\begin{lemma}\label{lemdp}
  If $p>3$ is prime, then $d_p(E)=-1$ if and only if $E$ has
  multiplicative reduction at $p$ and $p\equiv 3\pmod{4}$.
\end{lemma}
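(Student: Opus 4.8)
The plan is to reduce the lemma to the behaviour of the local root number under twisting by the unramified quadratic character of $\Q_p^\times$. Write $E_{-1}=E\otimes\chi_{-1}$, where $\chi_{-1}$ is the quadratic character of $\Q_p^\times$ cut out by the extension $\Q_p(\sqrt{-1})/\Q_p$, so that $d_p(E)=r_p(E)\,r_p(E\otimes\chi_{-1})$. First I would dispose of the case $p\equiv 1\pmod 4$: here $-1$ is a square in $\Z_p^\times$ by Hensel's lemma, so $\chi_{-1}$ is trivial, $E_{-1}$ is isomorphic to $E$ over $\Q_p$, and hence $d_p(E)=r_p(E)^2=1$. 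This is consistent with the asserted statement, since $p\not\equiv 3\pmod 4$ in this case.

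The substantive case is $p\equiv 3\pmod 4$. Then $\Q_p(\sqrt{-1})/\Q_p$ is the unramified quadratic extension, so $\chi_{-1}$ is the unramified quadratic character $\eta$ of $\Q_p^\times$, normalized by $\eta(p)=-1$. Here I would invoke the standard behaviour of local $\varepsilon$-factors under twisting by an unramified character (Tate, Deligne): with respect to an additive character of conductor $\Z_p$, one has
\[
\varepsilon_p(E\otimes\eta)=\eta(p)^{\mathfrak{a}_p(E)}\,\varepsilon_p(E)=(-1)^{\mathfrak{a}_p(E)}\,\varepsilon_p(E),
\]
where $\mathfrak{a}_p(E)$ denotes the conductor exponent of $E$ at $p$. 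Passing to root numbers gives $r_p(E_{-1})=(-1)^{\mathfrak{a}_p(E)}r_p(E)$, hence $d_p(E)=(-1)^{\mathfrak{a}_p(E)}$. Since $p>3$ there is no wild ramification, so $\mathfrak{a}_p(E)\in\{0,1,2\}$, and by the N\'eron--Ogg--Shafarevich criterion together with Ogg's conductor formula, $\mathfrak{a}_p(E)$ equals $0$, $1$, or $2$ according as $E$ has good, multiplicative, or additive reduction at $p$. Therefore, for $p\equiv 3\pmod 4$, we get $d_p(E)=-1$ precisely when $\mathfrak{a}_p(E)=1$, i.e.\ precisely when $E$ has multiplicative reduction at $p$. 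Combining this with the previous paragraph yields the lemma.

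The only real content — and the point deserving care — is the sign $(-1)^{\mathfrak{a}_p(E)}$ in the unramified-twist formula for the $\varepsilon$-factor; everything else is bookkeeping. If one prefers to avoid $\varepsilon$-factors entirely, the same conclusion can instead be read off Rohrlich's explicit local root number tables at primes $p\geq 5$ (\cite{Rh}): good reduction gives $r_p=+1$ and is preserved under any unramified twist; multiplicative reduction gives $r_p=-1$ exactly for non-split reduction, and an unramified quadratic twist — which is what twisting by $-1$ amounts to when $p\equiv 3\pmod 4$ — interchanges split and non-split reduction; and in the additive cases (potentially multiplicative, or potentially good with a fixed tame ramification degree $e\in\{2,3,4,6\}$) the local root number is a Legendre symbol at $p$ depending only on $p$ and $e$, neither of which changes under an unramified quadratic twist. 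Either route gives $d_p(E)=-1$ exactly in the stated range.
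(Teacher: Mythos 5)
Your proof is correct, and your primary route is genuinely different from the paper's. The paper proves the lemma by reading directly from Rohrlich's explicit local root number tables: it observes that the reduction type of $E$ and $E_{-1}$ agree (since $p$ is odd), cites \cite[Propositions 2 and 3]{Rh} to conclude $r_p(E)=r_p(E_{-1})$ in the good and additive cases, and then handles the multiplicative case by noting that the unramified quadratic twist preserves split/non-split exactly when $p\equiv 1\pmod 4$. You instead invoke the general $\varepsilon$-factor twisting formula $\varepsilon(\rho\otimes\chi,\psi)=\chi(\varpi)^{a(\rho)+n(\psi)\dim\rho}\varepsilon(\rho,\psi)$ for unramified $\chi$, split into the cases $p\equiv 1\pmod 4$ ($\chi_{-1}$ trivial) and $p\equiv 3\pmod 4$ ($\chi_{-1}$ the unramified quadratic character), and reduce everything to $d_p(E)=(-1)^{\mathfrak{a}_p(E)}$ together with $\mathfrak{a}_p(E)\in\{0,1,2\}$ for $p>3$. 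Your route is more conceptual: one formula replaces a three-way case analysis, and it makes visible why the answer is governed only by the conductor exponent. The cost is that it leans on the general theory of local constants rather than on facts that can be read off directly from the cited tables, which is what the paper's shorter, table-based argument buys. Your second, parenthetical route (verifying stability of $r_p$ under the unramified twist in each of Rohrlich's cases) is essentially the paper's own argument.
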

\begin{proof}
  Since $p$ is odd, we know that the reduction type (good,
  multiplicative, or additive) of $E$ is the same as the reduction
  type of $E_{-1}$.  It follows from \cite[Propositions 2 and 3]{Rh}
  that if $E$ has good or additive reduction at $p$, then
  $r_p(E)=r_p(E_{-1})$.

  Assume that $E$ has multiplicative reduction at $p$. Then, from
  \cite[Proposition 3]{Rh}, we see that $r_p(E)=1$ if and only if the
  reduction of $E$ at $p$ is split. Thus $d_p(E)=1$ exactly when both
  $E$ and $E_{-1}$ have split reduction or when both $E$ and $E_{-1}$
  have nonsplit reduction at $p$. It can be checked that this happens
  precisely when $p\equiv 1\pmod{4}$. The lemma follows.
\end{proof}

\begin{proposition}\label{prop5sum}
There exist two finite unions of large families $F_{3}$ and $F_{4}$ 
of elliptic curves $E_{A,B}$, defined by congruence conditions on $A$ and $B$ modulo primes greater than $3$, such that:
\begin{itemize}
 \item[{\rm 1}] $d_{1/6}(E)\equiv|\Delta_{6'}(E)|\pmod{4}$ for $E\in F_{3}$;
  \item[{\rm 2}] $d_{1/6}(E)\not\equiv|\Delta_{6'}(E)|\pmod{4}$ for $E\in F_{4}$;
  \item[{\rm 3}] The density of $F_{3}$ is greater than $96.689\%$;
  \item[{\rm 4}] The density of $F_{4}$ is greater than $3.26\%$;
\item[{\rm 5}] $F_3$ and $F_4$ are closed under twisting by $-1$.
\end{itemize}
\end{proposition}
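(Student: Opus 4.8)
The plan is to rewrite $d_{1/6}(E)$ as a Jacobi-symbol factor depending only on $|\Delta_{6'}(E)|$ times a product of local signs, and then to build $F_3$ and $F_4$ by prescribing those local signs. Fix a prime $q>3$ and, for an elliptic curve $E$ with $\Delta(E)\neq 0$, set $\epsilon_q(E):=d_q(E)\cdot\bigl(\tfrac{-1}{q}\bigr)^{v_q(\Delta(E))}$, where $v_q$ is $q$-adic valuation and $\bigl(\tfrac{-1}{q}\bigr)$ the Legendre symbol. Using Lemma~\ref{lemdp} together with Tate's algorithm, one checks case by case according to the Kodaira type of $E$ at $q$ that $\epsilon_q(E)=1$ unless $q\equiv 3\pmod 4$ and either $E$ has multiplicative reduction at $q$ with $v_q(\Delta(E))$ even, or $E$ has additive reduction at $q$ with $v_q(\Delta(E))$ odd; in both exceptional cases $\epsilon_q(E)=-1$ and, in particular, $q^2\mid\Delta(E)$. (For good reduction $d_q(E)=1$ and $v_q(\Delta(E))=0$; for type $I_n$, $v_q(\Delta(E))=n$ and $d_q(E)=\bigl(\tfrac{-1}{q}\bigr)$ or $1$ according as $q\equiv 3$ or $q\equiv 1\pmod 4$; for additive reduction $d_q(E)=1$ and $v_q(\Delta(E))\ge 2$.) Since $\epsilon_q(E)=1$ at every prime of good reduction, $\prod_{q>3}\epsilon_q(E)$ is a finite product, and multiplicativity of the Jacobi symbol in its lower entry gives $d_{1/6}(E)=\bigl(\tfrac{-1}{|\Delta_{6'}(E)|}\bigr)\cdot\prod_{q>3}\epsilon_q(E)$, using $|\Delta_{6'}(E)|=\prod_{q>3}q^{v_q(\Delta(E))}$. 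Since $\bigl(\tfrac{-1}{m}\bigr)\equiv m\pmod 4$ for every positive odd $m$, this yields
$$d_{1/6}(E)\equiv|\Delta_{6'}(E)|\pmod 4\iff\prod_{q>3}\epsilon_q(E)=1,$$
so conditions $1$ and $2$ of the proposition ask, respectively, that $\prod_{q>3}\epsilon_q(E)=1$ and $\prod_{q>3}\epsilon_q(E)=-1$.

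Next I would construct $F_3$. The reduction type of $E_{A,B}$ at $q$ and the valuation $v_q(\Delta(E_{A,B}))$ are each determined by $(A,B)$ modulo a power of $q$, so $\epsilon_q$ is locally constant on $\Z_q^2\setminus\{\Delta=0\}$; hence the locus $\{\epsilon_q=1\}$ is closed there with boundary of measure $0$. Let $F_3$ be the family defined by the congruence conditions: at each prime $q>3$, require $\epsilon_q(E)=1$; impose no condition at $2$ or $3$. Then $F_3$ is defined by congruence conditions on $A$ and $B$ modulo primes greater than $3$; it is a (single) large family, since $\{\epsilon_q=1\}\supseteq\{q^2\nmid\Delta\}$ at every $q>3$; and condition $1$ holds on $F_3$ by the first paragraph. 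Twisting by $-1$ sends $E_{A,B}$ to $E_{A,-B}$, which has the same $\Delta$ and the same $c_4$, hence the same reduction type and discriminant valuation — hence the same $\epsilon_q$ — at every prime; so $F_3$ is closed under twisting by $-1$. Its density among all elliptic curves equals $\prod_{q>3}\mu_q(\{\epsilon_q=1\})=\prod_{q\equiv 3\,(4)}(1-\beta_q)$, where $\beta_q:=\mu_q(\{\epsilon_q=-1\})$ and $\beta_q=0$ for $q\equiv 1\pmod 4$; computing the $\beta_q$ via Tate's algorithm, one verifies this product exceeds $0.96689$.

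To construct $F_4$, for each prime $p\equiv 3\pmod 4$ with $p>3$ let $L_p$ be the family defined by: $\epsilon_p(E)=-1$, and $\epsilon_q(E)=1$ for every prime $q>3$ with $q\neq p$. On $L_p$ we have $\prod_{q>3}\epsilon_q=-1$, so condition $2$ holds; $L_p$ is defined by congruence conditions at primes greater than $3$; it is large, the only prime at which its defining condition fails to contain $\{q^2\nmid\Delta\}$ being $q=p$; and it is closed under twisting by $-1$, exactly as for $F_3$. The $L_p$ are pairwise disjoint, with $\mu(L_p)=\beta_p\prod_{q\equiv 3\,(4),\,q\neq p}(1-\beta_q)$. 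Set $F_4:=\bigcup_{3<p\le P_0,\,p\equiv 3\,(4)}L_p$; this is a finite union of large families, closed under twisting by $-1$ and satisfying condition $2$, of density $\sum_{3<p\le P_0,\,p\equiv 3\,(4)}\mu(L_p)$. As $P_0\to\infty$ this increases to $\sum_{p\equiv 3\,(4),\,p>3}\beta_p\prod_{q\neq p}(1-\beta_q)$, which the values of $\beta_q$ from the previous paragraph show to exceed $0.0326$; so taking $P_0$ large enough gives a family $F_4$ of density $>3.26\%$.

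The main obstacle is the explicit computation of $\beta_q=\mu_q(\{\epsilon_q=-1\})$ for each $q\equiv 3\pmod 4$, and the numerical verification it feeds. Running Tate's algorithm over $\Z_q$ for the family $y^2=x^3+Ax+B$, one must isolate exactly the fibres contributing to $\{\epsilon_q=-1\}$ — the multiplicative types $I_2,I_4,\dots$ (even discriminant valuation) and the additive types with odd discriminant valuation (type $III$, and $I_n^{*},III^{*}$ with $n$ odd) — compute their $q$-adic densities, and then check that $\prod_{q\equiv 3\,(4)}(1-\beta_q)>0.96689$ and that a finite truncation of $\sum_{q\equiv 3\,(4),\,q>3}\beta_q(1-\beta_q)^{-1}$ already pushes the density of $F_4$ past $3.26\%$. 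Everything else — the reduction to $\prod_{q>3}\epsilon_q(E)=\pm1$, and the verification that $F_3$ and the $L_p$ are large, congruence-defined, and twist-stable — is formal given the local analysis of $\epsilon_q$.
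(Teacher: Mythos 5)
Your reduction is correct and matches the paper exactly: with $\epsilon_q(E)=d_q(E)\cdot\bigl(\tfrac{-1}{q}\bigr)^{v_q(\Delta(E))}$ (which is the paper's $\alpha_q$), multiplicativity of the Jacobi symbol and Lemma~\ref{lemdp} give $d_{1/6}(E)\equiv|\Delta_{6'}(E)|\pmod 4$ if and only if $\prod_{q>3}\epsilon_q(E)=1$, and your case analysis of when $\epsilon_q=-1$ (multiplicative reduction with $v_q$ even, or additive with $v_q$ odd, $q\equiv 3\pmod 4$) agrees with the paper. Your observation that $F_3$ and $F_4$ are automatically twist-stable (since $E_{A,B}\mapsto E_{A,-B}$ preserves $c_4$, $\Delta$, and hence reduction type and $v_q(\Delta)$) is also fine, and actually cleaner than the paper's closing remark. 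The construction of $F_4$ works: $\mu(L_p)=\beta_p\prod_{q\ne p}(1-\beta_q)$ with $\beta_p=\tfrac{1}{p(p+1)}$ does sum to something exceeding $3.26\%$.

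The gap is in the density of $F_3$. You define $F_3$ as $\{\epsilon_q(E)=1\text{ for every }q>3\}$, giving density $\prod_{q\equiv 3\,(4),\,q>3}\bigl(1-\tfrac{1}{q(q+1)}\bigr)$. From the reduction-type densities one gets $\beta_q=\tfrac{1}{q(q+1)}$ and hence $\sum\beta_q\approx 0.0338$, so this product is $\approx 0.9666$, which is \emph{below} the required $96.689\%$. The paper's $F_3$ is strictly larger: besides curves with $\epsilon_q=1$ at every prime, it also admits curves having $\epsilon_q=-1$ at \emph{exactly two} primes $q<10000$ (which of course still satisfy $\prod_q\epsilon_q=1$, and keeping the set finite and large), and this second piece contributes roughly an additional $0.0004$ of density — precisely the margin needed to clear $0.96689$. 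Your numerical claim ``one verifies this product exceeds $0.96689$'' is therefore false as stated; to repair the proof you must enlarge $F_3$ to $\{\epsilon_q=1\text{ at all }q\}\cup\bigcup_{p<p'\le P_0}\{\epsilon_p=\epsilon_{p'}=-1,\ \epsilon_q=1\text{ for }q\ne p,p'\}$, exactly as the paper does.
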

\begin{proof}
For an elliptic curve $E=E_{A,B}$ and a prime $p$ greater than $3$, define $\alpha_p(E)$ by
  \begin{equation}
    \begin{array}{rcl}
      \alpha_p(E)=\alpha_p(A,B)&\!\!\!\!:=\!\!\!\!&d_p(E_{A,B})\cdot (-1)^{v_p(\Delta(A,B))}\mbox{ if $p\equiv 3\!\!\!\!\pmod{4}$,}\\[.1in]
      \alpha_p(E)=\alpha_p(A,B)&\!\!\!\!:=\!\!\!\!&1\mbox{ otherwise}.
    \end{array}
  \end{equation}
If $p>3$ is a prime congruent
  to $3$ modulo $4$, then $\alpha_p(E)$ is $1$ if and only if $E$ has
  good reduction at $p$, or $E$ has multiplicative reduction at $p$
  and $v_p(\Delta(E))$ is odd, or $E$ has additive reduction at~$p$
  and $v_p(\Delta(E))$ is even.
We control the quantity $\alpha_p(A,B)$ by imposing one
  of the following two conditions:
  \begin{enumerate}
  \item $E$ has good reduction at $p$, or $E$ has multiplicative
    reduction at $p$ and $v_p(\Delta(E))\in\{1,3\}$, or $E$ has
    additive reduction at $p$ and $v_p(\Delta(E))\in\{2,4\}$. In
    either case, we have $\alpha_p(E)=1$.
  \item $E$ has multiplicative
    reduction at $p$ and $v_p(\Delta(E))=2$ or $E$ has
    additive reduction at $p$ and $v_p(\Delta(E))=3$. In
    both cases, we have $\alpha_p(E)=-1$.
  \end{enumerate}
  Define $F_{3}$ to be the set of elliptic curves $E$ such that $E$
  satisfies the first of the above two conditions at all primes
  congruent to $3$ modulo $4$, or $E$ satisfies the first of the above
  two conditions at all but two primes congruent to $3$ modulo $4$ and
  satisfies the second condition at these two primes. To ensure that
  we are constructing a finite union of large families, we 
  further assume that these two primes are smaller than
  $10000$. Similarly, define $F_{4}$ to be the set of elliptic curves
  that satisfy the first condition at all but one prime (which is
  bounded by 10000) and satisfies the second condition at this one
  prime. These two sets are both clearly finite unions of large
  families. Furthermore, since Lemma~\ref{lemdp} implies that
  $\prod_{p>3}\alpha_p(E)\equiv d_{1/6}(E)\cdot
  |\Delta_{6'}(E)|\!\pmod{4}$, we have
  $d_{1/6}(E)\equiv|\Delta_{6'}(E)|\!\pmod{4}$ if $E\in F_{3}$ and
  $d_{1/6}(E)\not\equiv|\Delta_{6'}(E)|\!\pmod{4}$ if $E\in F_{4}$.

It is easy to compute the densities of $F_3$ and $F_4$. Assume that
$p$ is a prime greater than~$3$.  Then it follows from an elementary
computation that a density of $1-\frac1p$ of elliptic curves have
good reduction at $p$, a density of $\frac1p-\frac1{p^2}$ have
multiplicative reduction at $p$, and a density of $\frac1{p^2}$ have
additive reduction at $p$.

Suppose an elliptic curve $E$ has additive reduction at $p$. Then $E$
is given by an equation $y^2=x^3+ax+b$, where $p$ divides both~$a$ and~$b$. This ensures that $p^2\mid\Delta(E)=4a^3-27b^2$. Clearly
$p^3\mid\Delta(E)$ if and only if $p^2\mid b$. Hence, of elliptic
curves $E$ having additive reduction at $p$, a density of
$\frac{p-1}p$ satisfy $v_p(\Delta(E))=2$. Now if $E:y^2=x^3+ax+b$ has
additive reduction at $p$ with $p^2\mid b$ (so that
$p^3\mid\Delta(E)$), then $p^4\mid\Delta(E)$ if and only if $p^2\mid
a$. Similarly, if we assume that $p^2$ divides both $a$ and~$b$, then
$p^5\mid\Delta(E)$ if and only if $p^3\mid b$. Thus, among elliptic
curves $E$ having additive reduction at $p$, a density of
$\frac{p-1}{p^2}$ satisfy $v_p(\Delta(E))=3$ and a density of
$\frac{p-1}{p^3}$ satisfy $v_p(\Delta(E))=4$.

  Finally, suppose that an elliptic curve $E$ has multiplicative
  reduction at $p$. Then we may assume that $E$ is given by an
  equation $y^2=x^3+cx^2+ax+b$, where $p$ divides both $a$ and~$b$. This ensures that $p\mid\Delta(E)$. As above, we may verify
  that among elliptic curves $E$ having multiplicative reduction at
  $p$, a density of $\frac{p-1}{p}$, $\frac{p-1}{p^2}$, and
  $\frac{p-1}{p^3}$ satisfy $v_p(\Delta(E))=1$, $v_p(\Delta(E))=2$,
  and $v_p(\Delta(E))=3$, respectively. From this, it is easy to
  compute the density of elliptic curves having prescribed values of
  $\alpha_p$. We may thus compute the densities of $F_3$ and $F_4$ using Proposition \ref{thcountellip}, and
  verify that they are as claimed by the proposition. Finally, as
  before, we may replace $F_3$ and $F_4$ by $F_3\cup \{E:E_{-1}\in
  F_3\}$ and $F_4\cup\{E:E_{-1}\in F_4\}$, respectively, to ensure that
  the last condition is also satisfied.
\end{proof}

The sets $F_{1}$ and $F_{2}$ are
disjoint and defined via congruence conditions modulo powers of $2$
and $3$, while the sets $F_{3}$ and
$F_{4}$ are disjoint and defined via
congruence conditions modulo powers of primes greater than $3$. All
four of these sets are finite unions of large families. Define
\begin{equation}
  \begin{array}{rcl}
    F^+&:=&((F_{1}\cap F_{4})\cup (F_{2}\cap F_{3}))\cap \{E:\Delta(E)>0\},\\[.02in]
    F^-&:=&((F_{1}\cap F_{3})\cup (F_{2}\cap F_{4}))\cap \{E:\Delta(E)<0\}.
  \end{array}
\end{equation}
These sets $F^+$ and $F^-$ are also finite unions of large families
and every elliptic curve $E$ in either of them satisfies $d(E)=-1$ by
construction. We may compute their densities by Propositions
\ref{thcountellip}, \ref{prop23sum}, and \ref{prop5sum} to be at least
$40.914\%$ and $58.534\%$, respectively. This yields Theorem
\ref{fam}.

\section{The average rank of elliptic curves is less than 1}
In this section we prove Theorems \ref{thavgrank}, \ref{rank01}, and
\ref{rank0}. First note that using only Theorem \ref{ellipall} we
obtain the following result:
\begin{proposition}\label{avgbound1}
Let $F$ be a large family of elliptic curves. Then, when elements in
$F$ are ordered by height, we have:
\begin{itemize}
\item[{\rm (a)}] The average $5$-Selmer rank of elliptic curves in $F$
  is bounded by $1.05$.
\item[{\rm (b)}] The set of elliptic curves in $F$ with $5$-Selmer
  rank $0$ or $1$ has density at least $19/24$.
\end{itemize}
\end{proposition}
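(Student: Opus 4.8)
The plan is to derive both assertions formally from Theorem~\ref{ellipall}, using only that $S_5(E)$ is an $\F_5$-vector space, so that $\#S_5(E)=5^{s(E)}$ where $s(E)$ denotes the $5$-Selmer rank of $E$. For $X>0$ let $F(X)=\{E\in F:H'(E)<X\}$, and for each integer $i\geq0$ let $d_i(X)$ denote the proportion of $E\in F(X)$ with $s(E)=i$; then $\sum_{i\geq0}d_i(X)=1$, while $\sum_{i\geq0}5^i\,d_i(X)$ is precisely the average of $\#S_5(E)$ over $F(X)$, which by Theorem~\ref{ellipall} tends to $6$ as $X\to\infty$.

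For part~(a), I would use the elementary pointwise inequality $5^i\geq 20i-15$, valid for every integer $i\geq0$, with equality exactly when $i\in\{1,2\}$. Averaging this over $E\in F(X)$ yields $\sum_{i\geq0}5^i\,d_i(X)\geq 20\sum_{i\geq0}i\,d_i(X)-15$; since the left-hand side tends to $6$ as $X\to\infty$, we obtain $\limsup_{X\to\infty}\sum_i i\,d_i(X)\leq 21/20=1.05$, which is the asserted bound on the average $5$-Selmer rank.

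For part~(b), I would instead use the pointwise inequality $24\cdot\mathbf{1}_{\{i\geq2\}}\leq 5^i-1$, again valid for every integer $i\geq0$: the cases $i=0,1,2$ are checked directly, and $5^i-1\geq24$ for $i\geq2$. Averaging over $F(X)$ and letting $X\to\infty$ gives $\limsup_{X\to\infty}\sum_{i\geq2}d_i(X)\leq 5/24$, whence the density of curves in $F$ of $5$-Selmer rank $0$ or $1$ is at least $1-5/24=19/24$. Equivalently, $19/24$ is the minimum of $d_0+d_1$ on the linear program $d_i\geq0$, $\sum_i d_i=1$, $\sum_i 5^i d_i=6$, attained at $d_0=19/24$, $d_2=5/24$ with all other $d_i=0$.

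There is no real obstacle here beyond Theorem~\ref{ellipall} itself; the only subtlety is that that theorem is an asymptotic statement about averages, so both conclusions are properly statements about the $\limsup$ (resp.\ $\liminf$) of the relevant proportions as the height cutoff $X\to\infty$, and are to be read in that sense.
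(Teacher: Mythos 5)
Your proof is correct and is essentially the paper's own argument: part~(a) uses the same pointwise bound $5^r\geq 20r-15$ applied to Theorem~\ref{ellipall}, and your inequality $1+24\cdot\mathbf{1}_{\{i\geq 2\}}\leq 5^i$ in part~(b) is just a rephrasing of the paper's bound $x_{0\,\mathrm{or}\,1}+25(1-x_{0\,\mathrm{or}\,1})\leq 6$. You also correctly flag that the conclusions are asymptotic (limsup/liminf) statements, matching the paper's usage.
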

\begin{proof}
As in the introduction, we note that $20r-15\leq 5^r$ for nonnegative
integers $r$. Therefore, by Theorem \ref{ellipall}, the limsup $\bar{r}_5$ of the
average $5$-Selmer rank of elliptic curves, when ordered
by height, satisfies $20\bar{r}_5-15\leq 6$, proving (a). This bound
is achieved when $95\%$ of elliptic curves have $5$-Selmer rank $1$
and $5\%$ have $5$-Selmer rank $2$.

Let $x_{0{\rm \,or\,}1}$ be the lower density of elliptic curves in $F$ having
$5$-Selmer rank $0$ or $1$. Then, from Theorem \ref{ellipall}, we obtain
$$
x_{0{\rm \,or\,}1}+25(1-x_{0{\rm \,or\,}1})\leq 6,
$$ and hence $x_{0{\rm \,or\,}1}\geq 19/24$,
proving (b). The bound is achieved when a proportion
of $19/24$ of elliptic curves in $F$ have $5$-Selmer rank $0$, and a
proportion of $5/24$ have $5$-Selmer rank $2$.
\end{proof}

We now improve Proposition \ref{avgbound1} in the case of 
large families $F$ having equidistributed root number. 
Recall that the analytic rank of an
elliptic curve $E$ is defined to be the order of vanishing at $1/2$ of
its $L$-function $L(E,s)$.
The evenness or oddness of the analytic rank of $E$ is determined by
whether the sign of the functional equation of $L(E,s)$---the root
number $r(E)$ of $E$---is $1$ or~$-1$, respectively.

The following remarkable result of Dokchitser and Dokchitser \cite{DD}
asserts that the parity of the $p$-Selmer rank of an elliptic curve is
determined by its root number:
\begin{theorem}[Dokchitser--Dokchitser \cite{DD}]\label{thDD}
Let $E$ be an elliptic curve over $\Q$ and let $p$ be any prime.  Let
$s_p(E)$ and $t_p(E)$ denote the rank of the $p$-Selmer group of $E$
and the rank of $E(\Q)[p]$, respectively.  Then the quantity
$s_p(E)-t_p(E)$ is even if and only if the root number of $E$ is~$+1$.
\end{theorem}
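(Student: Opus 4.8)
The plan is to reduce the statement to the $p$-parity conjecture over $\Q$ and then to prove that by a local descent combined with the Dokchitser--Dokchitser theory of regulator constants and known cases of parity. First I would show that $s_p(E)-t_p(E)$ has the same parity as $r_p(E):=\mathrm{corank}_{\Z_p}\mathrm{Sel}_{p^\infty}(E/\Q)$. Indeed, the descent sequence $0\to E(\Q)/pE(\Q)\to\mathrm{Sel}_p(E/\Q)\to\SH(E/\Q)[p]\to 0$ gives $s_p(E)=\mathrm{rk}\,E(\Q)+t_p(E)+\dim_{\F_p}\SH(E/\Q)[p]$, while $0\to E(\Q)\otimes(\Q_p/\Z_p)\to\mathrm{Sel}_{p^\infty}(E/\Q)\to\SH(E/\Q)[p^\infty]\to 0$ gives $r_p(E)=\mathrm{rk}\,E(\Q)+\delta$, where $\delta$ is the corank of the divisible part of $\SH(E/\Q)[p^\infty]$. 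By Cassels the Cassels--Tate pairing on $\SH(E/\Q)$ is alternating, so the quotient of $\SH(E/\Q)[p^\infty]$ by its maximal divisible subgroup is finite of square order, whence $\dim_{\F_p}\SH(E/\Q)[p]\equiv\delta\pmod{2}$; combining these gives $s_p(E)-t_p(E)\equiv r_p(E)\pmod{2}$. It thus suffices to prove $(-1)^{r_p(E)}=w(E)$, the $p$-parity conjecture, where $w(E)$ denotes the global root number.

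Next I would express both sides of $(-1)^{r_p(E)}=w(E)$ as products of local contributions. The right side is by definition $\prod_v w(E/\Q_v)$, with the local factors given explicitly by Rohrlich~\cite{Rh} (at the archimedean place and away from $2,3$) and Halberstadt~\cite{Hal} (at $2$ and $3$). For the left side I would invoke Poitou--Tate duality: for any finite Galois $F/\Q$ the representation $\mathrm{Hom}(\mathrm{Sel}_{p^\infty}(E/F),\Q_p/\Z_p)\otimes\Q_p$ of $\mathrm{Gal}(F/\Q)$ is self-dual, and feeding this self-duality into the theory of regulator constants expresses, for any relation $\Theta=\sum_i n_i[\mathrm{Gal}(F/\Q)/H_i]$ in the rational representation ring, the quantity $\prod_i(-1)^{n_i r_p(E/F^{H_i})}$ as a product over places of local regulator constants of $E$ attached to $\Theta$. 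The corresponding product of $\Theta$-twisted root numbers is formal. The resulting ``$\Theta$-parity'' statement --- that these two products agree --- then reduces, place by place, to the identity that the local regulator constant equals, modulo squares, the pertinent product of local root numbers. I would prove this local identity by direct computation, using that the local regulator constants are assembled from local Tamagawa numbers, component groups, and local points of $E$, exactly the data in terms of which \cite{Rh} and \cite{Hal} record the local root numbers; this yields $\Theta$-parity for all $\Theta$.

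Finally, granting $\Theta$-parity, I would choose a relation in a suitable solvable (essentially dihedral) extension $F/\Q$ which expresses the trivial character of $\mathrm{Gal}(\overline{\Q}/\Q)$ --- whose ``parity'' is precisely the comparison of $(-1)^{r_p(E/\Q)}$ with $w(E)$ --- in terms of one-dimensional characters of $\mathrm{Gal}(\overline{\Q}/\Q)$ together with two-dimensional representations induced from imaginary quadratic fields. For the one-dimensional twists, $p$-parity over $\Q$ is known via Kato's Euler system and Rohrlich's root-number formulas; for the induced pieces it follows from Nekov\'a\v{r}'s theorem on $p$-parity in anticyclotomic $\Z_p$-extensions, the auxiliary imaginary quadratic field being chosen so that his hypotheses hold. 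The $\Theta$-parity identity then forces $(-1)^{r_p(E/\Q)}=w(E)$, and the first step completes the proof.

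I expect the main obstacle to be the local matching in the second step: proving, at \emph{every} place and for \emph{every} relation $\Theta$, that the local regulator constant agrees modulo squares with the relevant product of local root numbers. This is where the numerous explicit local root-number computations of Rohrlich and Halberstadt are indispensable, and where one must keep precise track of the roles of local Tamagawa factors and component groups; the primes $2$ and $3$ and, more generally, primes of additive reduction are the most delicate. A secondary difficulty is arranging the Brauer relation in the third step so that each nontrivial constituent lies in a range where $p$-parity is actually known --- in particular, selecting the imaginary quadratic auxiliary field so that Nekov\'a\v{r}'s anticyclotomic hypotheses are satisfied.
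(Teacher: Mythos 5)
In the paper this result is not proved but simply \emph{cited}: Theorem~\ref{thDD} is the $p$-parity theorem of Dokchitser and Dokchitser~\cite{DD}, quoted as a black box to feed into the rank bounds of \S6, so there is no in-paper proof to compare your argument against. Your opening reduction --- that $s_p(E)-t_p(E)$ has the same parity as the $\Z_p$-corank $r_p(E)$ of $\mathrm{Sel}_{p^\infty}(E/\Q)$, via the two descent sequences and the non-degenerate alternating Cassels--Tate pairing on $\SH(E/\Q)[p^\infty]$ modulo its divisible part giving $\dim_{\F_p}\SH(E/\Q)[p]\equiv\delta\pmod{2}$ --- is a correct, standard argument, and it exactly identifies the statement as printed with the $p$-parity conjecture in the form proved in~\cite{DD}. (For $p=2$ one needs that the pairing on $\SH$ of an elliptic curve is genuinely alternating, not merely antisymmetric; this is known, but is a point worth flagging.)

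The remainder of your proposal is a plausible high-level outline of the actual Dokchitser--Dokchitser argument (regulator constants for Brauer relations, local matching against the Rohrlich and Halberstadt root-number tables, and a dihedral Brauer relation reducing to Nekov\'a\v{r}'s anticyclotomic parity). Two caveats. First, the ``prove the local identity by direct computation'' step is the technical core of an entire Annals paper, covering every reduction type, every place, and every relation; presenting it as a routine verification understates by a wide margin what is involved, and is not something one could or should reproduce inside the present paper. Second, the claim that $p$-parity for the $1$-dimensional twists is ``known via Kato's Euler system'' is imprecise: Kato's theorem gives only one implication (nonvanishing of a twisted $L$-value forces finiteness of the corresponding Selmer group), which is not the parity statement; the Brauer relation in~\cite{DD} is calibrated so that what is actually needed from the $1$-dimensional constituents is weaker than full $p$-parity for arbitrary twists. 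For the purposes of this paper, citing~\cite{DD} --- as the authors do --- is the correct move, and the value of your sketch is in correctly tying the quoted $\mathrm{Sel}_p$ statement to the $p^\infty$-Selmer corank formulation that~\cite{DD} actually proves.
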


It is widely believed that when elliptic curves are ordered by height,
$50\%$ have root number $1$ and $50\%$ have root number $-1$. The same
is believed true in any large family as well. For such families
we have the following result whose proof is similar to \cite[Theorem~39]{TC}.
\begin{proposition}\label{avgbound2}
Let $F$ be a large family of elliptic curves such that exactly
$50\%$ of the curves in $F$, when ordered by height, have root number
$1$. Then we have:
\begin{itemize}
\item[{\rm (a)}] The average $5$-Selmer rank of elliptic curves in $F$
  is bounded above by $.75$.
 \item[{\rm (b)}] The set of elliptic curves in $F$ with
   $5$-Selmer rank $0$ or $1$ has density at least $7/8$.
\item[{\rm (c)}] The set of elliptic curves in $F$ with $5$-Selmer
  rank $0$ has density at least $3/8$.
\end{itemize}
\end{proposition}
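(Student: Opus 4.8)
The plan is to feed Theorem~\ref{ellipall} and the Dokchitser--Dokchitser parity theorem (Theorem~\ref{thDD}) into a short linear-programming argument, exactly in the spirit of \cite[Theorem~39]{TC}. First I would observe that all but a density-zero subset of curves $E\in F$ satisfy $E(\Q)[5]=0$: a rational point of order $5$ on $E$ would force, via Theorem~\ref{leme2ep}, every integral $v$ with invariants $I(E),J(E)$ to have nontrivial $G_\Q$-stabilizer, so the strongly irreducible such $v$ lie in $V_\Z^{{\rm bigstab}}$, whose orbit count is $o(X^{5/6})$ by Lemma~\ref{lemtotred}. For the remaining curves, Theorem~\ref{thDD} (with $t_5(E)=0$) says that the $5$-Selmer rank of $E$ is even precisely when $r(E)=+1$. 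Since by hypothesis exactly half of the curves of $F$ have root number $+1$, it follows that, as $F$ is ordered by height, the proportion of curves with even $5$-Selmer rank and the proportion with odd $5$-Selmer rank each tend to $\tfrac12$. Writing $\pi_i$ for the (limiting) proportion of curves in $F$ of $5$-Selmer rank $i$, we thus have $\sum_i\pi_i=1$, $\sum_{i\ \mathrm{even}}\pi_i=\sum_{i\ \mathrm{odd}}\pi_i=\tfrac12$, and, by Theorem~\ref{ellipall}, $\sum_i 5^i\pi_i=6$; in particular $\pi_0\le\tfrac12$ and $\pi_1\le\tfrac12$.

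For parts (b) and (c), I would split the identity $\sum_i 5^i\pi_i=6$ into its even and odd parts and bound each below using $5^i\ge 25$ for even $i\ge 2$ and $5^i\ge 125$ for odd $i\ge 3$:
\[
6\ \ge\ \Bigl(\pi_0+25\bigl(\tfrac12-\pi_0\bigr)\Bigr)+\Bigl(5\pi_1+125\bigl(\tfrac12-\pi_1\bigr)\Bigr)\ =\ 75-24\pi_0-120\pi_1 ,
\]
which rearranges to the master inequality $\pi_0+5\pi_1\ge\tfrac{23}{8}$. Combined with $\pi_1\le\tfrac12$ this gives $\pi_0\ge\tfrac{23}{8}-\tfrac52=\tfrac38$, proving (c); and $\pi_0+\pi_1=(\pi_0+5\pi_1)-4\pi_1\ge\tfrac{23}{8}-2=\tfrac78$, proving (b).

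For part (a) I would instead use the linear lower bounds $5^i\ge 12i+1$ (valid for all even $i\ge 0$) and $5^i\ge 60i-55$ (valid for all odd $i\ge 1$). Averaging the first over the even-rank curves and the second over the odd-rank curves, and writing $S_e=\sum_{i\ \mathrm{even}}5^i\pi_i$ and $S_o=\sum_{i\ \mathrm{odd}}5^i\pi_i$ (so $S_e+S_o=6$), this yields $12\sum_{i\ \mathrm{even}}i\pi_i\le S_e-\tfrac12$ and $60\sum_{i\ \mathrm{odd}}i\pi_i\le S_o+\tfrac{55}{2}$; since every odd rank is at least $1$ we also have $S_o\ge 5\cdot\tfrac12=\tfrac52$, hence $S_e\le\tfrac72$. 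Adding the two bounds and substituting $S_o=6-S_e$, the average $5$-Selmer rank satisfies $\sum_i i\pi_i\le\frac{S_e-1/2}{12}+\frac{67/2-S_e}{60}=\frac{4S_e+31}{60}\le\frac{45}{60}=\tfrac34$, which is (a). The only real content here is this elementary optimization; the one (very mild) technical obstacle is the passage from the finitary counts of curves of bounded height to the inequalities for the $\pi_i$, involving the error terms and lower densities, which is handled exactly as in \cite{TC}. In each of (a)--(c) the extremal distribution is the one recorded after Theorem~\ref{thavgrank}: $37.5\%$, $50\%$, and $12.5\%$ of the curves of $F$ having $5$-Selmer group of size $1$, $5$, and $25$, respectively.
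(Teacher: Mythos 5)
Your proof is correct and follows essentially the same route as the paper: feed Theorem~\ref{ellipall} and the Dokchitser--Dokchitser parity theorem into the same linear-programming argument, using the same piecewise-linear lower bounds $12n+1\le 5^n$ (even $n$), $60n-55\le 5^n$ (odd $n$) for part~(a), and $5^i\ge 25$ (even $i\ge 2$), $5^i\ge 125$ (odd $i\ge 3$) for parts~(b),(c). The only differences are cosmetic (you package (b) and (c) through a single ``master inequality'' $\pi_0+5\pi_1\ge 23/8$, whereas the paper writes two separate inequalities; and you phrase the optimization for~(a) in terms of $S_e$ rather than the conditional averages $\bar{r}_5^{\rm even},\bar{r}_5^{\rm odd}$), plus one small caveat: your justification that $E(\Q)[5]=0$ generically, via Lemma~\ref{lemtotred}, only bounds the number of strongly irreducible orbits with big stabilizer, not the number of curves $E$ with $E(\Q)[5]\ne 0$; the standard (and easier) argument is that such curves are parametrized by a one-parameter modular family and hence have density zero among the two-parameter family of all $(A,B)$ — a point the paper does not address at all.
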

\begin{proof}
Note that $12n+1\leq 5^n$ for $n$ even and $60n-55\leq 5^n$ for $n$
odd. Let $\bar{r}_5^{{\rm even}}$  (resp.\ $\bar{r}_5^{{\rm odd}}$) denote
the lim sup of the average $5$-Selmer rank of elliptic curves having even (resp.\ odd)
$5$-Selmer rank. Since the root number in $F$ is equidistributed, Theorems~\ref{ellipall} and \ref{thDD} imply that
\begin{equation}
6\bar{r}_5^{{\rm even}}+30\bar{r}_5^{{\rm odd}}\leq 6-\frac12+\frac{55}2=33.
\end{equation}
Under the above constraint, $\bar{r}_5^{{\rm even}}+\bar{r}_5^{{\rm
    odd}}$ is clearly maximized when $\bar{r}_5^{{\rm odd}}$ is
minimized, which happens when $\bar{r}_5^{{\rm odd}}=1$. Therefore, we
have $(\bar{r}_5^{{\rm even}}+\bar{r}_5^{{\rm odd}})/2\leq
(1/2+1)/2=.75$, proving (a). This bound is achieved when $37.5\%$ of
elliptic curves have $5$-Selmer rank $0$, $50\%$ of elliptic curves
have $5$-Selmer rank $1$, and $12.5\%$ of elliptic curves have
$5$-Selmer rank $2$.

Let $x_{0{\rm \,or\,}1}$ denote the lower density of elliptic curves with 5-Selmer rank 0 or 1.  
Also, let $x_0$ (resp.\ $x_1$) denote the lower density of elliptic curves
with $5$-Selmer rank $0$ (resp.\ $1$).  By Theorems~\ref{ellipall} and~\ref{thDD}, we have
$$ x_0+25(1/2-x_0)+5(x_1+25(1/2-x_1))\leq 6.$$ Thus, we obtain
$24x_0+120x_1\geq 69$. In conjunction with the constraint $x_1\leq 1/2$, it
follows that $x_{0{\rm \,or\,}1}\geq x_0+x_1\geq 7/8$, proving (b). Again, this bound is
achieved when $37.5\%$ of elliptic curves have $5$-Selmer rank $0$,
$50\%$ of elliptic curves have $5$-Selmer rank $1$, and $12.5\%$ of
elliptic curves have $5$-Selmer rank $2$.

Finally, let $x_0$ again denote the lower density of elliptic curves with
$5$-Selmer rank $0$. Since $50\%$ of elliptic curves in $F$ have odd
$5$-Selmer rank, we obtain from Theorems \ref{ellipall} and
\ref{thDD}, that
$$ x_0+25(1/2-x_0)+5/2\leq 6.
$$ It follows that $x_0+25(1/2-x_0)\leq 7/2$ and thus $x_0\geq 3/8$,
proving (c). Once again, this bound is achieved when $37.5\%$ of
elliptic curves have $5$-Selmer rank $0$, $50\%$ of elliptic curves
have $5$-Selmer rank $1$, and $12.5\%$ of elliptic curves have
$5$-Selmer rank $2$.
\end{proof}

Theorems \ref{thavgrank}, \ref{rank01}, and \ref{rank0} now follow by
applying Proposition~\ref{avgbound2} on the family $F$ constructed in
Theorem~\ref{fam}, applying Proposition~\ref{avgbound1} on the
complement of $F$, and noting that the $5$-Selmer rank of an elliptic
curve $E$ is an upper bound for its rank $r(E)$. Since the density of
$F$ is $\mu(F)\geq .5501$, we have by Part (a) of Propositions
\ref{avgbound2} and \ref{avgbound1} that the average rank of elliptic
curves is at most
$$.5501\times.75+.4499\times1.05=.88497<.885$$ which proves Theorem
\ref{thavgrank}. By Part (b) of the propositions, we see that the
lower density of elliptic curves with rank $0$ or $1$ is at least
$$.5501\times7/8+.4499\times19/24\geq .8375$$ which proves Theorem
\ref{rank01}. Finally, Part (c) of Proposition \ref{avgbound2} implies
that the lower density of elliptic curves with rank $0$ is at
least $$.5501\times3/8\geq .2062$$ which proves Theorem
\ref{rank0}.

\subsection*{Acknowledgments}
We thank Noam Elkies, Tom Fisher, Dorian Goldfeld, Dick Gross, Roger
Heath-Brown, Harald Helfgott, Wei Ho, Henryk Iwaniec, Barry Mazur,
Bjorn Poonen, Peter Sarnak, Michael Stoll, Alice Silverberg, Christopher Skinner, Jacob Tsimerman,
Jerry Wang, and Kevin Wilson for helpful conversations. The first
author was partially supported
by NSF Grant~DMS-1001828.

\end{document}